\title{Rigidity and exotic models for $v_1$-local $G$-equivariant stable homotopy theory}
\author{Irakli Patchkoria}
\email{irpatchk@math.uni-bonn.de}
\address{I. Patchkoria \\ Mathematisches Institut der Universit{\"a}t Bonn\\ Endenicher Allee 60\\ 53115 Bonn \\ Germany}
\author{Constanze Roitzheim}
\email{C.Roitzheim@kent.ac.uk}
\address{C. Roitzheim \\ University of Kent \\ School of Mathematics, Statistics and Actuarial Science \\ Cornwallis \\ Canterbury, CT1 7NF, UK}
\subjclass[2010]{}
\DeclareMathOperator{\Ho}{Ho}
\DeclareMathOperator{\Hom}{Hom}
\DeclareMathOperator{\Sp}{Sp}
\DeclareMathOperator{\incl}{incl}
\DeclareMathOperator{\pinch}{pinch}
\DeclareMathOperator{\Res}{Res}
\DeclareMathOperator{\Map}{Map}
\DeclareMathOperator{\proj}{proj}
\DeclareMathOperator{\Cone}{Cone}
\DeclareMathOperator{\res}{res}
\DeclareMathOperator{\tr}{tr}
\DeclareMathOperator{\Comod}{Comod}
\DeclareMathOperator{\Fun}{Fun}
\DeclareMathOperator{\Mod}{Mod}
\DeclareMathOperator{\Top}{Top}
\newcommand{\lra}{\longrightarrow}
\newcommand{\Z}{\mathbb{Z}}
\newcommand{\C}{\mathcal{C}}
\newcommand{\PP}{\mathcal{P}}
\newcommand{\FF}{\mathcal{F}}
\newcommand{\FFF}{\mathbb{F}}
\newcommand{\SpG}{\Sp_G}
\newcommand{\SpH}{\Sp_H}
\newcommand{\SpJ}{\Sp_J}
\newcommand{\LSpG}{L_{v_1}\Sp_G}
\newcommand{\LSpH}{L_{v_1}\Sp_H}
\newcommand{\LSpJ}{L_{v_1}\Sp_J}
\newcommand{\SigmaGH}{\Sigma^\infty G/H_+}
\newcommand{\SigmaGK}{\Sigma^\infty G/K_+}
\newcommand{\SigmaGL}{\Sigma^\infty G/L_+}
\newcommand{\SigmaG}{\Sigma^\infty G_+}
\newcommand{\vv}{v_1^4}
\newcommand{\GEPH}{G \ltimes_H \Sigma^{\infty}{E\PP(H)}_{+}}
\newtheorem{theorem}{Theorem}[section]
\newtheorem*{theorem*}{Theorem}
\newtheorem{proposition}[theorem]{Proposition}
\newtheorem{corollary}[theorem]{Corollary}
\newtheorem{lemma}[theorem]{Lemma}
\newtheorem{definition}[theorem]{Definition}
\newtheorem{rmk}[theorem]{Remark}
\newcommand*\cocolon{%
        \nobreak
        \mskip6mu plus1mu
        \mathpunct{}%
        \nonscript
        \mkern-\thinmuskip
        {:}%
        \mskip2mu
        \relax
}
\newcommand{\lradjunction}{\,\,\raisebox{-0.1\height}{$\overrightarrow{\longleftarrow}$}\,\,}
\begin{document}
\begin{abstract}
We prove that the $v_1$-local $G$-equivariant stable homotopy category for $G$ a finite group has a unique $G$-equivariant model at $p=2$. This means that at the prime $2$ the homotopy theory of $G$-spectra up to fixed point equivalences on $K$-theory is uniquely determined by its triangulated homotopy category and basic Mackey structure. The result combines the rigidity result for $K$-local spectra of the second author with the equivariant rigidity result for $G$-spectra of the first author. Further, when the prime $p$ is at least $5$ and does not divide the order of $G$, we provide an algebraic exotic model as well as a $G$-equivariant exotic model for the $v_1$-local $G$-equivariant stable homotopy category, showing that for primes $p \ge 5$ equivariant rigidity fails in general. 
\end{abstract}
\maketitle

\section*{Introduction}

\setcounter{section}{1}

A basic idea in topology is looking at geometric objects up to homotopy and then ask the question to what extent the original objects can be rediscovered from this homotopy information. The language of \emph{model categories} places this in a clean, formal environment: a model category is a category $\C$ with a notion of homotopy between morphisms. Thus, one can consider its \emph{homotopy category} $\Ho(\C)$, which, very roughly speaking, has the same objects as $\C$ but the morphisms are homotopy classes of the morphisms of $\C$. So, just knowing $\Ho(\C)$ as a category, to what extent can the model category $\C$ be recovered from this information?  

To make this question more workable, one considers \emph{stable} model categories. The homotopy category of a stable model category is \emph{triangulated}. The triangulated structure comes from axiomatizing cofiber sequences and encodes secondary compositions such as \emph{Toda brackets}. Schwede showed in \cite{Sch07} that all of the higher homotopy information of the stable model category of spectra $\Sp$ is contained in the triangulated structure of the stable homotopy category $\Ho(\Sp)$ alone. More precisely, he showed that when given a stable model category $\C$ such that $\Ho(\C)$ is triangulated equivalent to $\Ho(\Sp)$, then $\C$ is Quillen equivalent to $\Sp$. This phenomenon is referred to as \emph{rigidity}. In other words the stable homotopy category $\Ho(\Sp)$ is \emph{rigid}. 

The second author considered a $K$-local version of rigidity at the prime 2, showing that the higher homotopy information of $K$-local spectra $L_1\Sp$ is entirely contained in the triangulated structure of their homotopy category $\Ho(L_1\Sp)$ \cite{Roi07}. The cases $\Ho(\Sp)$ and $\Ho(L_1\Sp)$ both reduce to a core calculation involving the endomorphisms of a compact generator (in Schwede's case the stable homotopy groups of spheres $\pi_*(S^0)$, in the $K$-local case the stable homotopy groups of the $K$-local sphere $\pi_*(L_1S^0)$), although the reduction techniques are rather different- Schwede uses the Adams filtration while the $K$-local proof exploits $v_1$-periodicity. For further examples of rigidity see \cite{BarRoi14b, Hut12}. 

The $K$-local stable homotopy theory at odd primes exhibits a completely different feature. By results of Bousfield \cite{Bou85}, Franke \cite{Fra96} and \cite{Pat16}, it follows that for $p \geq 5$ the $K_{(p)}$-local stable homotopy category is \emph{not} rigid. More precisely, there is an algebraic model category $\C$ whose homotopy category is triangulated equivalent to $\Ho(L_1\Sp)$ but $\C$ is not Quillen equivalent to $L_1\Sp$. In other words $\C$ is an \emph{exotic} model for the $K_{(p)}$-local stable homotopy category. A weaker result holds for $p=3$. In this case the same happens except we do not know whether the equivalence $\Ho(\C) \sim \Ho(L_1\Sp)$ is triangulated. For further examples of exotic models see \cite{Schl, DugShi09, Pat16, P17}. 

Another instance of rigidity shows up in equivariant stable homotopy theory. Let $G$ be a finite group and $\SpG$ denote the stable model category of orthogonal $G$-spectra in the sense of \cite{ManMay}. The first author obtained in \cite{P16} the following uniqueness result at the prime 2 which is referred to as \emph{$G$-equivariant rigidity}: given a cofibrantly generated, proper, $G$-equivariant stable model category $\C$ (see Section \ref{prelim}) with a triangulated equivalence $$\Psi: \Ho(\SpG) \longrightarrow \Ho(\C)$$ and isomorphisms
$$ \Psi(\SigmaGH) \cong G/H_+ \wedge^\mathbf{L} \Psi(S^0_G) \,\,\,\mbox{for all subgroups} \,\, H \leq G ,$$
that are compatible with the restrictions, conjugations and transfers, then $\C$ is $G$-$\Top_*$-Quillen equivalent to $\SpG$. Here, the extra assumptions on compatibility with the $G$-Mackey structure are entirely reasonable as the main motivation is to compare different model categories of structured $G$-spectra. (Please note that we omit the prime $2$ from notation.)

So with results on $\Ho(L_1\Sp)$ and $\Ho(\SpG)$ it is only natural to combine those methods and ask about a ``chromatic level 1'' version of $G$-equivariant rigidity. For this, we consider  $v_1$-localisation $\LSpG$ of $\SpG$ which turns out to satisfy a range of desirable properties. This localisation (described in Section \ref{sec:localcat}) is obtained by $K$-localising (or $v_1$-localising) fixed points and it is very different from localising at equivariant $K$-theory $K_G$. Studying rigidity and exotic models for $\LSpG$ can be also viewed as a logical continuation of the work that has been done in ``chromatic level 0'', i.e. in the rational equivariant stable homotopy theory, see e.g. \cite{Gre99, GS07, Bar09, BarRoi14b, BarGreKedShi, Ked16a, Ked16}. 

The following is one of the main results of this paper:
\begin{theorem} \label{maintheo}
Let  $G$ be a finite group and $\C$ a cofibrantly generated, proper, $G$-equivariant stable model category. Further let $\LSpG$ denote the category of orthogonal $G$-spectra with the $v_1$-local model structure at the prime $2$. Suppose we are given an equivalence of triangulated categories 
\[
\Psi: \Ho(\LSpG) \longrightarrow \Ho(\C)
\]
together with isomorphisms
\[
\Psi(L_{v_1}\SigmaGH) \cong G/H_+ \wedge^\mathbf{L} \Psi(L_{v_1}S^0_G) \,\,\,\mbox{for all subgroups} \,\, H \leq G
\]
which are natural with respect to the restrictions, conjugations and transfers. Then there is a zig-zag of $G$-$\Top_*$-Quillen equivalences between $\C$ and $\LSpG$.
\end{theorem}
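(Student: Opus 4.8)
The plan is to combine the two rigidity results recalled above: the $G$-equivariant rigidity of $\Ho(\SpG)$ from \cite{P16} will supply the equivariant bookkeeping --- a set of compact generators, the Burnside/Mackey structure, and the induction over the subgroup lattice --- while the $K$-local rigidity of $\Ho(L_1\Sp)$ at $p=2$ from \cite{Roi07} will supply the ``coefficient'' input, namely that the relevant $v_1$-periodic endomorphism ring spectrum is determined by its homotopy category.

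First I would reduce to a comparison of endomorphism $G$-$\Top_*$-categories. The $v_1$-local orbit suspension spectra $L_{v_1}\SigmaGH$, $H\le G$, are a set of compact generators of $\Ho(\LSpG)$, so their images $\Psi(L_{v_1}\SigmaGH)$ are a set of compact generators of $\Ho(\C)$. Using the framework of \cite{P16} --- the $G$-equivariant form of the Schwede--Shipley and Guillou--May recognition theorems, where the cofibrant generation and properness of $\C$ are used --- one obtains a $G$-$\Top_*$-category $\mathcal{E}$, assembled from cofibrant--fibrant models of the $L_{v_1}\SigmaGH$ inside $\LSpG$, together with a zig-zag of $G$-$\Top_*$-Quillen equivalences between $\LSpG$ and a category of modules over $\mathcal{E}$; similarly a $G$-$\Top_*$-category $\mathcal{E}_\C$ built from the $\Psi(L_{v_1}\SigmaGH)$ with $\C$ Quillen equivalent to modules over $\mathcal{E}_\C$. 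It then suffices to construct a chain of $G$-$\Top_*$-equivalences $\mathcal{E}\simeq\mathcal{E}_\C$ compatible on homotopy categories with $\Psi$; the asserted zig-zag between $\C$ and $\LSpG$ follows formally.

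Next I would analyse $\mathcal{E}$ and match it with $\mathcal{E}_\C$ on homotopy. The mapping spectra of $\mathcal{E}$ are $v_1$-localisations of the function spectra between orbit suspension spectra; via the Wirthm{\"u}ller isomorphism and the $v_1$-localised tom Dieck splitting their homotopy groups are assembled, over the subgroup lattice and the Weyl-group actions, out of the $v_1$-local stable stems $\pi_*(L_1S^0)$, and $\pi_0\mathcal{E}$ is the $2$-local Burnside category. The hypothesis that $\Psi$ is natural for restrictions, conjugations and transfers gives an isomorphism $\pi_0\mathcal{E}\cong\pi_0\mathcal{E}_\C$ identifying the two Burnside structures, and $\Psi$, being triangulated, refines this to an isomorphism of the full graded categories $\pi_*\mathcal{E}\cong\pi_*\mathcal{E}_\C$ over it. To promote this to an equivalence of $G$-$\Top_*$-categories I would run the $v_1$-periodicity argument of \cite{Roi07} over the whole Burnside category: smashing the generators with the mod $2$ Moore spectrum and iterating the Adams self-map $\vv$, each mapping spectrum of $\mathcal{E}$ and of $\mathcal{E}_\C$ is reassembled from $\vv$-periodic pieces (together with rational information), each determined by a bounded amount of homotopy; the ``diagonal'' blocks, controlled by the endomorphism ring spectrum of $L_{v_1}S^0_G$, are rigid by \cite{Roi07} (which plays here the role that Schwede's rigidity \cite{Sch07} plays in \cite{P16}), and the passage from the diagonal blocks to all of $\mathcal{E}$ is carried out by the isotropy filtration and the free-$W_GH$-spectrum induction of \cite{P16}.

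The hardest part will be coupling the $v_1$-periodicity with the equivariant structure. One must choose the self-maps $\vv$ naturally over the Burnside category, compatibly with restrictions and transfers, and --- more delicately --- show that $\Psi$ carries them to the corresponding self-maps on the $\C$-side; since the hypotheses pin down $\Psi$ only through $\pi_0$ and the triangulation, this requires a separate argument that a sufficiently large power of a $v_1$-self map is determined by the triangulated structure together with its effect on homotopy groups, so that the ``bounded truncation suffices'' reduction applies uniformly across all of $\mathcal{E}$ and $\mathcal{E}_\C$ at once. A secondary technical point, already present in \cite{Roi07}, is that $v_1$-localisation is not smashing, so the compactness of the $L_{v_1}\SigmaGH$ in $\Ho(\LSpG)$ and the description of coproducts there have to be handled with care when invoking the recognition theorem above.
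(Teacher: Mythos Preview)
Your proposal has a genuine gap at the step where you ``promote the isomorphism $\pi_*\mathcal{E}\cong\pi_*\mathcal{E}_\C$ to an equivalence of $G$-$\Top_*$-categories.'' You never explain how to produce an actual map (or zig-zag of maps) between $\mathcal{E}$ and $\mathcal{E}_\C$; having abstractly isomorphic homotopy groups is not enough, and this is precisely the content of a rigidity statement. You cite \cite{Roi07} and \cite{P16} as supplying this step, but neither reference proves anything of the form ``two ring spectra with isomorphic $\pi_*$ are equivalent.'' Both papers instead construct an explicit Quillen functor first and then prove that \emph{that particular functor} is an equivalence via calculations. Your Morita reduction to comparing $\mathcal{E}$ and $\mathcal{E}_\C$ is correct in principle, but it does not bypass the need for a comparison map; it merely repackages it.

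The paper's route is quite different in its architecture. It first replaces $\C$ by an $\SpG$-model category (this is what \cite{P16} actually provides, not a Guillou--May theorem), so that for $X$ a cofibrant model of $\Psi(L_{v_1}S^0_G)$ one has a Quillen adjunction $-\wedge X:\SpG\rightleftarrows\C$. The first substantial step is to show this descends to $\LSpG$, i.e.\ that $\mathbf{R}\Hom(X,Y)$ is $v_1$-local; this amounts to showing that the exact functor $F=\Psi^{-1}\circ(-\wedge^{\mathbf L}X):\Ho(\SpG)\to\Ho(\LSpG)$ sends $\SigmaGH\wedge v_1^4$ to an isomorphism for every $H$. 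For $H=1$ this is a concrete Toda-bracket computation pinning down $F(\eta),F(\nu),F(\sigma),F(\mu)$ and hence $F(v_1^4)$ up to $K_*$-equivalence; the induction over $H$ uses isotropy separation together with a Bredon-homology/Atiyah--Hirzebruch argument showing that $F$ preserves cellular chains and hence $\mathcal{F}$-equivalences. Only after this does one have a Quillen functor $\LSpG\to\C$, and the second half of the paper shows it is an equivalence by a further mod-$2$ reduction (finiteness of $[L_{v_1}\SigmaGH\wedge M,L_{v_1}\SigmaGK]^{G,v_1}_*$), reduction to $H=K$ via a split injection, and another isotropy-separation induction. Your sketch does identify the right ingredients (Toda brackets, $v_1$-periodicity, subgroup induction), but they enter through this explicit functor, not through an abstract comparison of endomorphism categories.

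One factual correction: you say $v_1$-localisation is not smashing and flag this as a technical difficulty. In fact the paper proves (Proposition~\ref{generators}) that $L_{v_1}$ \emph{is} smashing, essentially because the maps being inverted have compact source and target; this is what guarantees that the $L_{v_1}\SigmaGH$ remain compact generators.
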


The proof of this theorem combines methods of \cite{Roi07} and \cite{P16} as well as requires some new mathematical input. It starts by constructing a Quillen adjunction
\[
- \wedge X: \LSpG \lradjunction \C: \Hom(X,-).
\]

Next, we  compose the left derived functor ${-\wedge^\mathbf{L} X}$ with the equivalence $\Psi^{-1}$ obtaining an exact endofunctor
\[
\mathbb{F}: \Ho(\LSpG) \xrightarrow{-\wedge^\mathbf{L} X} \Ho(\C) \xrightarrow{\Psi^{-1}} \Ho(\LSpG).
\] 
We can then reduce the question to studying $\mathbb{F}$ on the compact generators $L_{v_1}\SigmaGH$ ($H \leq G$) of $\Ho(\LSpG)$, or more precisely, to showing that the map
\[
\mathbb{F}: [L_{v_1}\SigmaGH, L_{v_1}\SigmaGK]^{G,v_1}_* \longrightarrow [\mathbb{F}(L_{v_1}\SigmaGH), \mathbb{F}(L_{v_1}\SigmaGK)]^{G,v_1}_*
\]
is an isomorphism for any subgroups $H, K \leq G$. (Here, $[-,-]^{G,v_1}_*$ denotes morphisms in the $v_1$-local $G$-equivariant stable homotopy category.)

The proof combines a variety of interesting methods- classical calculations involving $v_1$-periodicity and homotopy groups of the $K$-local sphere, mod-$2$ $K$-theory of $BG$, the model category techniques and tools from equivariant homotopy theory such as the geometric fixed points, tom Dieck splitting and double coset formula. The result provides new insight to understanding of the structure of $\Ho(\SpG)$. Furthermore, it ventures towards studying $\Ho(\SpG)$ in the context of chromatic homotopy theory.  

In the last part of the paper we deal with the case of odd primes and show that rigidity can fail in general. More precisely, following Franke, we consider the category of twisted chain complexes of $E(1)_*E(1)$-comodules. Our exotic algebraic model is the model category of twisted $([1],1)$-chain complexes of Mackey functor objects in $E(1)_*E(1)$-comodules, denoted by $$\mathcal{C}^{([1],1)}(E(1)_*E(1)[\mathcal{M}(G)]\mbox{-}\Comod).$$ The main result of the cocluding part of this paper is that if $p$ is an odd prime and $p$ does not divide the order of the finite group $G$, then $\mathcal{C}^{([1],1)}(E(1)_*E(1)[\mathcal{M}(G)]\mbox{-}\Comod)$ is Quillen equivalent to a cofibrantly generated, proper $G$-equivariant stable model category, thus showing that the analogous result to Theorem \ref{maintheo} does not hold if $p \geq 5$. The case when $p$ is an odd prime and the order of the finite group $G$ is divisible by $p$ remains open.


\bigskip
This paper is organised as follows.

We start by quickly recalling some notions and discuss notation. Then, in Section \ref{sec:localcat} we consider the $v_1$-local model structure on $G$-spectra and discuss its properties. In particular, we indicate why some of the classical results in equivariant stable homotopy theory also hold in $\Ho(\LSpG)$.

\medskip
In Section \ref{sec:quillenfunctor} we set up the Quillen functor that is going to be part of the main theorem. In \cite{P16} it was shown that any cofibrantly generated, proper, $G$-equivariant stable model category can be replaced by a $G$-spectral one. For such a $\C$ and a cofibrant $X \in \C$, there is a Quillen adjunction
\[
- \wedge X: \Sp_G: \lradjunction \C: \Hom(X,-).
\]
We discuss when this set-up factors over the localisation $L_{v_1}\Sp_G$.

\medskip
Section \ref{sec:calculation} provides the main calculation involving generators and relations of $\pi_*(L_1S^0)$. We show that an exact functor $$F : \Ho(\Sp_G) \longrightarrow \Ho(L_{v_1}\Sp_G)$$ compatible with the necessary $G$-equivariant structure sends $\Sigma^\infty G_+ \wedge v_1^4$ to an isomorphism. We also show that any exact functor $$\mathbb{F} : \Ho(\LSpG) \longrightarrow \Ho(L_{v_1}\Sp_G)$$ with certain equivariance preoperties induces an isomorphism 
\[\mathbb{F}: [L_{v_1}\SigmaG, L_{v_1}\SigmaG]^{G,v_1}_* \xrightarrow{\cong} [\mathbb{F}(L_{v_1}\SigmaG), \mathbb{F}(L_{v_1}\SigmaG)]^{G,v_1}_* \]
when $\ast=0, \cdots, 9$. This uses relations and Toda brackets in $\pi_*(L_1S^0)$ as in \cite{Roi07}.

\medskip
In Section \ref{sec:revisited} we finish the proof that
\[
- \wedge X: L_{v_1}\Sp_G \lradjunction \C: \Hom(X,-)
\]
is a Quillen functor using cellular chains, Bredon homology and the equivariant Atiyah-Hirzebruch spectral sequence. This input is also new to the rigidity context and potentially has applications also in other rigidity problems.

\medskip
In Section \ref{sec:quillenequivalence} we show that the previously constructed Quillen functor is actually a Quillen equivalence. This uses our calculation in Section \ref{sec:calculation} and some classical results in equivariant stable homotopy theory. 

\medskip
To put the rigidity result in context, in Section \ref{sec:exotic} we show that $\Ho(\LSpG)$ has an algebraic model as well as an exotic $G$-equivariant stable model at primes $p \ge 5$ with $p \nmid |G|$. 

\section*{Acknowledgments} The first author acknowledges the support of the Danish National Research Foundation through the Centre for Symmetry and Deformation (DNRF92) and the German Research Foundation Schwerpunktprogramm 1786. He also thanks the Hausdorff Research Institute for Mathematics in Bonn for their hospitality. Finally the first author would like to thank Gijs Heuts, Akhil Mathew, Stefan Schwede and Christian Wimmer for helpful conversations.

The second author thanks the Department of Mathematical Sciences in Copenhagen and the Hausdorff Research Institute for Mathematics in Bonn for their hospitality. Furthermore, the second author would like to thank David Barnes, Anna Marie Bohmann, John Greenlees and Mike Hill for useful discussions.

\section{Background and conventions} \label{prelim}

We assume that the reader is familiar with the basic background related to stable model categories and (left) Bousfield localisation, for example see \cite{Hir03, Hov99, Bou79}. 

By $L_1=L_{K_{(p)}}$ we denote Bousfield localisation with respect to $p$-local topological $K$-theory. The $1$ in the notation refers to the fact that $L_{K_{(p)}}=L_{E(1)}$ where $E(1)$ is the Johnson-Wilson spectrum of chromatic level 1, i.e. $$K_{(p)} \simeq \bigvee\limits_{i=0}^{p-2}\Sigma^{2i} E(1) \,\,\,\,\mbox{and}\,\,\,\, E(1)_*=\mathbb{Z}_{(p)}[v_1, v_1^{-1}], \,\,\,|v_1|=2(p-1).$$

Throughout the paper, $G$ is a finite group. We will use the stable model category $\SpG$ of \emph{equivariant orthogonal $G$-spectra} based on a complete universe constructed in \cite{ManMay}. This model category is enriched, tensored and cotensored over the model category of pointed $G$-spaces $G$-$\Top_*$. Moreover, for any finite dimensional orthogonal $G$-representation $V$, the functor $$S^V \wedge - : \Ho(\SpG) \xrightarrow{\sim} \Ho(\SpG)$$ is an equivalence of categories. Here $S^V$ is the representation sphere of $V$, i.e. the one-point compactification of $V$. In fact $\SpG$ is an example of a \emph{$G$-equivariant stable model category}, see e.g. \cite[Definition 3.1.1]{P16}: recall that a model category $\C$ is a $G$-equivariant stable model category if it is enriched, tensored and cotensored over the category of pointed $G$-spaces $G$-$\Top_*$ in a compatible way (i.e. the pushout-product axiom holds) and if the adjunction
$$\xymatrix{ S^V \wedge -  \colon \C \ar@<0.5ex>[r] & \C \cocolon \Omega^V(-) \ar@<0.5ex>[l]}$$
is a Quillen equivalence for any finite dimensional orthogonal $G$-representation $V$. (When we write adjunctions, the left adjoint is always the top arrow.) 

Next we also note that the model category $\C$ in Theorem \ref{maintheo} is assumed to be a cofibrantly generated $G$-$\Top_*$-model category in the sense of Definition 3.3.1 of \cite{P16}. 

We mention that all the known classical models for $G$-equivariant stable homtopy theory based on spaces (and not on simplicial sets) are cofibrantly generated, proper $G$-equivariant stable model categories. These include as already mentioned $\SpG$ \cite{ManMay}, the $\mathbb{S}$-model structure (flat model structure, see \cite[Theorem 2.3.27]{Sto} and also \cite{BrDuSt}), the model category of $S_G$-modules \cite[IV.2]{ManMay}, the model category of $G$-equivariant continuous functors \cite{Blumberg} and the model categories of $G$-equivariant topological symmetric spectra in the sense of \cite{Man04} and \cite{Haus}.

We will denote graded morphisms in $\Ho(\Sp)$ by $[-,-]^{\Sp}_*$ and morphisms in $\Ho(L_1\Sp)$ by $[-,-]^{L_1\Sp}_*$. For graded morphisms in the equivariant stable homtopy category $\Ho(\SpG)$ we will use $[-,-]^{G}_*$. 

A basic computation in equivariant stable homotopy theory needed in the formulation of the main theorem is that of $[\SigmaGH,\SigmaGK]^{G}_0$. There are three types of basic stable maps here. Let ${}^g H$ denote the conjugate subgroup $gHg^{-1}$. Then the map 
$$g \colon \Sigma^{\infty}G/{}^g H_{+} \lra \SigmaGH$$
in the homotopy category $\Ho(\SpG)$ given by $[x] \mapsto [xg]$ on the point-set level is called the \emph{conjugation} map associated to $g$ and $H$. Further, if $K$ is another subgroup of $G$ such that $K \leq H$, then we have the \emph{restriction} map
$$\res_{K}^H \colon \SigmaGK \lra \SigmaGH$$
which is just the obvious projection on the point-set level. Moreover, there is also a map backwards, called the \emph{transfer} map
$$\tr_{K}^H \colon \SigmaGH \lra \SigmaGK,$$
given by the Pontryagin-Thom construction (see e.g. \cite[IV.3]{LewMaySte86} or \cite[II.8]{tom}). These morphisms generate the full preadditive subcategory of $\Ho(\SpG)$ with objects the stable orbits $\SigmaGH$, $H \leq G$ \cite[V.9]{LewMaySte86}. In fact this category is equivalent to the \emph{Burnside category} $\mathcal{B}(G)$ of $G$ used in Section \ref{sec:exotic}.

\section{$v_1$-local $G$-spectra}\label{sec:localcat}

In this section we are going to construct the $v_1$-localisation of $G$-spectra and describe its properties. We will work at the prime $2$ (omitted from notation), though most of the constructions here work at any prime (see Remark \ref{otherprimes}).

Throughout the paper let $M$ denote the mod-$2$ Moore spectrum, which fits into the exact triangle in $\Ho(\Sp)$
\[
S^0 \stackrel{2}{\longrightarrow} S^0 \xrightarrow{\incl} M \xrightarrow{\pinch} S^1.
\]
Here, $\incl$ is the inclusion of the bottom cell, and $\pinch$ is the map that ``pinches'' off the bottom cell so that only the top cell is left.
As $M$ is rationally trivial it possesses a $v_1$-self map (see for example \cite{HopSm98})
\[
v_1^4: \Sigma^8 M \longrightarrow M
\]
which induces an isomorphism in $K$-theory. Note that despite the notation, $v_1^4$ is not the fourth power of some existing map $v_1$, it only alludes to the fact that the map has degree 8 and the element $v_1 \in E(1)_*$ has degree 2. By \cite[Theorem 4.8]{Bou79} a spectrum $X$ is $K$-local if and only if
\[
[M,X]^{\Sp}_n \xrightarrow{(v_1^4)^*} [M,X]^{\Sp}_{n+8}
\]
is an isomorphism for all $n$. Thus, we can think of $K$-localisation as ``$v_1$-localisation''. 

We now translate this into the equivariant setting.
\bigskip
\begin{definition}
The \emph{$v_1$-localisation} of $\SpG$ is defined as the left Bousfield localisation at the set of maps
\[
W=\{ \SigmaGH \wedge \Sigma^8 M \xrightarrow{\SigmaGH \wedge \vv} \SigmaGH \wedge M \,\,|\,\, H \leq G \}.
\]
The notation for the resulting model category is $\LSpG$, and the morphisms in the homotopy category $\Ho(\LSpG)$ are denoted by $[-,-]^{G,v_1}_*$.
\end{definition}

This localisation exists because $\SpG$ is proper and cellular, and $W$ is a set \cite{Hir03}. Using \cite{Hir03} we also see that this localisation is a cofibrantly generated, proper, $G$-equivariant stable model category (see \cite[Proposition 4.5.1 and Lemma 12.4.1]{Hir03} for smallness conditions). Similarly one can localise the other models of $G$-equivariant stable homotopy theory mentioned in Section \ref{prelim}. These model categories are again proper and cellular, and their localisations are cofibrantly generated, proper, $G$-equivariant stable model categories.

We note the following immediate consequences. The notation $(-)^H$ used below will stand for both derived and point-set level categorical fixed points. 

\begin{itemize}
\item $X \in \SpG$ is $v_1$-local if and only if  
\[
[\SigmaGH \wedge M, X]^{G}_n \xrightarrow{(\SigmaGH \wedge \vv)^*} [\SigmaGH \wedge M, X]^{G}_{n+8} 
\]
is an isomorphism for all $n$ and all $H \leq G$. By adjunction, this is equivalent to 
\[
[M, X^H]^{\Sp}_n \xrightarrow{(\vv)^*} [M, X^H]^{\Sp}_{n+8}
\]
being an isomorphism for all $n$ and all $H \leq G$. By the properties of $\vv$ mentioned earlier, this is equivalent to the derived fixed points $X^H$ being $K$-local for all $H \leq G$. 
\item The Quillen adjunction $$ \SigmaGH \wedge -: \Sp \lradjunction \SpG: (-)^H$$ is also a Quillen adjunction $$ \SigmaGH \wedge -: L_1\Sp \lradjunction \LSpG: (-)^H$$ because $(-)^H$ sends $v_1$-local $G$-spectra to $K$-local spectra. (Note that in general, a Quillen adjunction  $$F: \mathcal{C} \lradjunction \mathcal{D}: G$$  is also a Quillen adjunction $F:L_S \mathcal{C} \lradjunction \mathcal{D}: G$ if and only if $G$ sends fibrant objects in $\mathcal{D}$ to $S$-local objects in $\mathcal{C}$, see \cite{BarRoi11b}. We prove this explicitly in Lemma \ref{lem:factor}.)
\end{itemize}

\begin{rmk} When we work with model categories and say that an object $X$ is local, then we will assume that it underlying fibrant. However, when working in homotopy categories we will ignore this condition. If we need to make $X$ underlying fibrant, this can be always arranged by replacing it fibrantly in the original model category. Further, in the context of homotopy categories $X^H$ will always stand for derived fixed points, and when considering fixed points on the point-set level, then $X^H$ will denote the (non-derived) categorical fixed points.
 
\end{rmk}

The set $\{ \SigmaGH \,\,|\,\, H \leq G\}$ is a set of compact generators for $\Ho(\SpG)$. We now describe the situation for $\Ho(\LSpG)$. 

\begin{proposition} \label{generators}
The localisation $L_{v_1}$ is smashing. In particular, 
\[
\{ L_{v_1}\SigmaGH \,\,|\,\, H \leq G\}
\]
is a set of compact generators for $\Ho(\LSpG)$.
\end{proposition}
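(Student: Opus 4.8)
The plan is to verify that the $v_1$-localisation $L_{v_1}$ on $\SpG$ is smashing, i.e.\ that for every $G$-spectrum $X$ the natural map $X \wedge L_{v_1}S^0_G \to L_{v_1}X$ is an equivalence, and then deduce the statement about compact generators as a formal consequence. The first reduction is to transport the question to fixed points. By the first bullet point after the definition of $\LSpG$, a $G$-spectrum $Y$ is $v_1$-local if and only if all of its derived fixed points $Y^H$ are $K$-local. Since categorical (derived) fixed points commute with smash products with a fixed spectrum only up to the tom Dieck-type splitting, the correct statement to use here is that for each $H \le G$ the functor $(-)^H$ is exact and preserves arbitrary coproducts on $\Ho(\SpG)$ — this is where the compactness of the $\SigmaGH$ enters — so that $(-)^H$ commutes with the homotopy colimits defining localisations.

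The main step is: the class of $v_1$-local $G$-spectra is closed under arbitrary coproducts. Given this, $L_{v_1}$ is smashing by the standard argument (see e.g.\ the smashing criterion for finite localisations, Miller/Ravenel/Bousfield): the localisation $L_{v_1}$ is a \emph{finite} localisation, being the localisation away from the cofibres of the self-maps $\SigmaGH \wedge v_1^4$, each of which is a map between compact objects. More precisely, the acyclics form the localising subcategory generated by the cofibres $\SigmaGH \wedge M/v_1^4$ of $\SigmaGH \wedge v_1^4$; these cofibres are compact objects of $\Ho(\SpG)$; and a finite localisation — one whose acyclic class is generated by compact objects — is automatically smashing, because the colocalisation functor is then built from a filtered homotopy colimit of compact objects and hence commutes with coproducts. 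I would spell this out by recalling that $L_{v_1}X$ sits in a triangle $C_{v_1}X \to X \to L_{v_1}X$ with $C_{v_1}X$ in the localising subcategory generated by the $\SigmaGH\wedge M/v_1^4$, and that $C_{v_1}(-)$ is smashing because $C_{v_1}S^0_G$ is a homotopy colimit of finite spectra and $\Ho(\SpG)$ is compactly generated.

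For the second assertion: once $L_{v_1}$ is smashing, $L_{v_1}\SigmaGH = \SigmaGH \wedge L_{v_1}S^0_G$, and smashing with the fixed object $L_{v_1}S^0_G$ commutes with coproducts, so each $L_{v_1}\SigmaGH$ is compact in $\Ho(\LSpG)$ because $\SigmaGH$ is compact in $\Ho(\SpG)$ and $[\SigmaGH \wedge L_{v_1}S^0_G, L_{v_1}Y]^{G}_* \cong [\SigmaGH, L_{v_1}Y]^{G}_*$. They generate because the $\SigmaGH$ generate $\Ho(\SpG)$: if $Y$ is $v_1$-local with $[L_{v_1}\SigmaGH, Y]^{G,v_1}_* = [\SigmaGH, Y]^G_* = 0$ for all $H$, then $Y \simeq 0$.

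The hard part will be pinning down the precise sense in which the localisation is finite so that the smashing conclusion applies cleanly — in particular checking that $(-)^H$ commutes with the relevant homotopy colimits and that the cofibres $\SigmaGH \wedge M/v_1^4$ genuinely generate the acyclics as a \emph{localising} (not just thick) subcategory, which is exactly the input that makes the finite-localisation machinery produce a smashing localisation rather than merely a Bousfield localisation. I expect this to be short given the compactness of the $\SigmaGH$ and the characterisation of $v_1$-locality via $K$-local fixed points already established, but it is the one place where something could go wrong if equivariant subtleties (e.g.\ failure of $(-)^H$ to be monoidal) were not handled.
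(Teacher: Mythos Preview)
Your proposal is correct and rests on the same underlying observation as the paper --- compactness of the domains and codomains of the maps in $W$ --- but you package it differently. The paper argues on the \emph{local} side: an arbitrary coproduct of $v_1$-local objects is $v_1$-local, which is immediate because the objects $\SigmaGH\wedge M$ are compact in $\Ho(\SpG)$; this says $L_{v_1}$ preserves coproducts, hence is smashing by \cite[Definition~3.3.2]{HPS}, and preservation of compactness follows from \cite[Theorem~3.5.2]{HPS}. You instead argue on the \emph{acyclic} side via finite localisation: the acyclics are generated as a localising subcategory by the compact cofibres $\SigmaGH\wedge M/v_1^4$, and such localisations are smashing (Miller, Neeman). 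These are dual formulations of the same fact, and both are short.

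Two comments. First, your opening paragraph about fixed points, tom Dieck splittings, and whether $(-)^H$ commutes with the relevant colimits is a red herring: none of that is needed once you use compactness of the $\SigmaGH\wedge M$ directly, and the paper does not touch it. Second, the worry you flag at the end --- whether the acyclics really coincide with $\mathrm{Loc}(\{\text{cones of }W\})$ --- is easily dispatched in a compactly generated triangulated category: since the cones are compact, $\mathrm{Loc}(\{\text{cones}\})^{\perp}$ consists exactly of the $W$-local objects, so the finite localisation and the left Bousfield localisation at $W$ agree. There is no genuine equivariant subtlety here.
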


\begin{proof}

The proof is based on results of \cite{HPS}. We start by observing that an arbitrary coproduct of $v_1$-local objects is $v_1$-local. This follows from the fact that the domains and codomains of morphisms in $W$ are compact in $\Ho(\SpG)$. Reformulating, we see that $L_{v_1}$ preserves arbitrary coproducts. Hence the localisation is smashing by \cite[Definition 3.3.2]{HPS} and $L_{v_1}$ preserves compactness by \cite[Theorem 3.5.2]{HPS}.\end{proof}



We can now describe the $v_1$-local $G$-sphere.

\begin{proposition} \label{localsphere}
Let $\varepsilon^*: \Ho(\Sp) \longrightarrow \Ho(\SpG)$ be the derived inflation functor, and let $L_1S^0$ be the $K$-local sphere. Then $\varepsilon^* L_1S^0$ is the $v_1$-localisation of the $G$-equivariant sphere spectrum $S^0_G$. 
\end{proposition}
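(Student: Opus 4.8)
The plan is to identify $\varepsilon^* L_1 S^0$ with $L_{v_1} S^0_G$ by verifying the two defining properties of a $v_1$-localisation: that $\varepsilon^* L_1 S^0$ is $v_1$-local, and that the unit map $S^0_G \to \varepsilon^* L_1 S^0$ (induced from the $K$-localisation unit $S^0 \to L_1 S^0$ after applying the derived inflation $\varepsilon^*$) is a $W$-local equivalence. By the universal property of left Bousfield localisation, these two facts together pin down $L_{v_1} S^0_G$ up to canonical weak equivalence.

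For the first point, I would use the characterisation of $v_1$-local objects recorded just before Proposition \ref{generators}: an object $X \in \SpG$ is $v_1$-local if and only if the derived fixed points $X^H$ are $K$-local for all $H \leq G$. So I need to compute $(\varepsilon^* L_1 S^0)^H$. The key input here is the standard fact that for the inflation functor $\varepsilon^*$ from nonequivariant spectra, the categorical (derived) $H$-fixed points of $\varepsilon^* Y$ are naturally equivalent to $Y$ itself for every subgroup $H \leq G$ — inflation followed by fixed points is the identity up to equivalence. Granting this, $(\varepsilon^* L_1 S^0)^H \simeq L_1 S^0$, which is $K$-local by construction, so $\varepsilon^* L_1 S^0$ is $v_1$-local.

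For the second point, I must show the map $S^0_G \to \varepsilon^* L_1 S^0$ is a $W$-local equivalence, i.e. that it induces isomorphisms on $[-, Z]^{G,v_1}_*$ for every $v_1$-local $Z$; equivalently, since $L_{v_1}$ is smashing (Proposition \ref{generators}), it suffices to show it becomes an equivalence after applying $L_{v_1}$, and for that it is enough to check it induces an isomorphism on $[\SigmaGH \wedge M, -]^G_*$-groups, or more directly on mod-$2$ homotopy of all fixed points. Applying $(-)^H$ to the map $S^0_G \to \varepsilon^* L_1 S^0$ gives, by the fixed-point computation above, the nonequivariant $K$-localisation map $S^0 \to L_1 S^0$, which is by definition a $v_1$-equivalence of spectra (iso on $v_1$-periodic mod-$2$ homotopy). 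Since $v_1$-equivalences in $\SpG$ are detected on fixed points (again by the bulleted characterisation), the map $S^0_G \to \varepsilon^* L_1 S^0$ is a $W$-local equivalence. Combining the two points, $\varepsilon^* L_1 S^0$ is a $v_1$-localisation of $S^0_G$, as claimed.

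The main obstacle I anticipate is making precise and properly justified the claim that derived $H$-fixed points of an inflated spectrum recover the original spectrum — in the orthogonal $G$-spectrum model one must be careful about which functor ``$\varepsilon^*$'' denotes (the Borel/geometric-vs-genuine distinction) and about whether one takes point-set or derived fixed points, so this step needs a clean reference or a short argument, e.g. via the $(\SigmaGH \wedge -, (-)^H)$ Quillen adjunction together with the fact that $\varepsilon^*$ is the composite of such left adjoints along $G \to e$. Everything else is a formal consequence of the universal property of Bousfield localisation together with the fixed-point characterisation of $v_1$-locality and $v_1$-equivalence already established in this section.
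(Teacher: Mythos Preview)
Your worry in the final paragraph is well founded, and in fact the claim you rely on is simply false: in the genuine equivariant category (complete universe, as in this paper), the composite $(-)^H \circ \varepsilon^*$ of derived functors is \emph{not} the identity. What is true is $\Phi^H \circ \varepsilon^* \cong \mathrm{id}$ for \emph{geometric} fixed points, but the characterisation of $v_1$-locality uses categorical fixed points. Already for $Y = S^0$ one has $(\varepsilon^* S^0)^H = (S^0_G)^H$, which by the tom Dieck splitting is $\bigvee_{(K) \leq H} \Sigma^\infty BW_H(K)_+$, not $S^0$. The correct identity (used later in the proof of Corollary~\ref{fixedpt}) is $(\varepsilon^* Y)^H \cong (S^0_G)^H \wedge Y$; this is still $K$-local when $Y = L_1 S^0$ since $L_1$ is smashing, so the first half of your argument can be repaired. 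The second half also breaks: applying $(-)^H$ to $S^0_G \to \varepsilon^* L_1 S^0$ yields $(S^0_G)^H \to (S^0_G)^H \wedge L_1 S^0$, not $S^0 \to L_1 S^0$; and more seriously, you have not justified that $v_1$-local equivalences are detected on fixed points --- this is essentially Corollary~\ref{fixedpt}, which the paper deduces \emph{from} the present proposition, so invoking it here would be circular.

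The paper avoids computing any fixed points of $\varepsilon^* L_1 S^0$. For the $v_1$-local equivalence it uses only the adjunction $(\varepsilon^*, (-)^G)$: for $v_1$-local $Z$ one has $[\varepsilon^* L_1 S^0, Z]^G_* \cong [L_1 S^0, Z^G]^{\Sp}_*$ and likewise for $S^0$, and since $Z^G$ is $K$-local the map $S^0 \to L_1 S^0$ induces an isomorphism. For $v_1$-locality it uses Spanier--Whitehead duality and monoidality of $\varepsilon^*$ to rewrite $[\SigmaGH \wedge M, \varepsilon^* L_1 S^0]^G_*$ as $[\SigmaGH, \varepsilon^*(DM \wedge L_1 S^0)]^G_*$, and then observes that $D(v_1^4) \wedge L_1 S^0$ is already an isomorphism in $\Ho(\Sp)$.
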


\begin{proof} We have to show the following.
\begin{enumerate}
\item $S^0_G=\varepsilon^* S^0 \longrightarrow \varepsilon^* L_1S^0$ is a $v_1$-local equivalence.
\item $\varepsilon^* L_1S^0$ is a $v_1$-local $G$-spectrum. 
\end{enumerate}
For the first point, let $Z$ be a $v_1$-local $G$-spectrum. This implies that $Z^G$ is a $K$-local spectrum. As $S^0 \longrightarrow L_1S^0$ is a $K$-equivalence, this means that the induced map
\[
[L_1S^0, Z^G]^{\Sp}_* \longrightarrow [S^0,Z^G]^{\Sp}_*
\]
is an isomorphism. By adjunction,
\[
[\varepsilon^* L_1S^0, Z]^{G}_* \longrightarrow [\varepsilon^* S^0,Z]^{G}_*
\]
is an isomophism. This implies that $S^0_G=\varepsilon^*S^0 \longrightarrow \varepsilon^*L_1S^0$ is a $v_1$-local equivalence.

The $G$-spectrum $\varepsilon^* L_1S^0$ is $v_1$-local if
\[
[\SigmaGH \wedge M, \varepsilon^* L_1S^0]^G_* \xrightarrow{(\SigmaGH \wedge \vv)^*} [\SigmaGH \wedge M, \varepsilon^* L_1S^0]^G_{*+8}
\]
is an isomorphism. By Spanier-Whitehead duality (denoted by $D$), this is equivalent to 
\[
[\SigmaGH, DM \wedge \varepsilon^* L_1S^0]^G_* \xrightarrow{{D(\vv)}_*} [\SigmaGH, DM \wedge \varepsilon^* L_1S^0]^G_{*-8}
\]
being an isomorphism. As $\varepsilon^*$ is monoidal, this is the same as asking for 
\[
[\SigmaGH, \varepsilon^*(DM \wedge L_1S^0)]^G_* \xrightarrow{{D(\vv)}_*} [\SigmaGH,  \varepsilon^*(DM \wedge L_1S^0)]^G_{*-8}
\]
to be an isomorphism. But $$D(\vv) \wedge L_1S^0: DM \wedge L_1S^0 \longrightarrow \Sigma^{-8} DM \wedge L_1S^0$$ is an isomorphism in $\Ho(\Sp)$, so we can conclude that $\varepsilon^* L_1S^0$ is a $v_1$-local $G$-spectrum as desired.
\end{proof}

\begin{corollary}
For a $G$-spectrum $X$, $L_{v_1}X \cong X \wedge L_{v_1}S^0_G \cong X \wedge \varepsilon^* L_1S^0$ in $\Ho(\SpG)$. \qed
\end{corollary}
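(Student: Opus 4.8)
The plan is to deduce this Corollary directly from Proposition \ref{generators} (that $L_{v_1}$ is smashing) together with Proposition \ref{localsphere} (which identifies $L_{v_1}S^0_G$ with $\varepsilon^* L_1S^0$). The second isomorphism $X \wedge L_{v_1}S^0_G \cong X \wedge \varepsilon^* L_1S^0$ is then immediate by smashing $X$ against the equivalence of Proposition \ref{localsphere}, so the real content is the first isomorphism $L_{v_1}X \cong X \wedge L_{v_1}S^0_G$. First I would recall what ``smashing'' means in the sense of \cite[Definition 3.3.2]{HPS}: the localisation functor $L_{v_1}$, being a Bousfield localisation of the tensor-triangulated category $\Ho(\SpG)$, is smashing precisely when the natural map $X \wedge L_{v_1}S^0_G \to L_{v_1}X$ (adjoint to the unit $S^0_G \to L_{v_1}S^0_G$ smashed with $X$, then using that the target is already local) is an isomorphism for all $X$; equivalently, when $L_{v_1}$ commutes with arbitrary coproducts. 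The latter characterisation is exactly what was verified in the proof of Proposition \ref{generators}.

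So the argument runs as follows. The functor $- \wedge L_{v_1}S^0_G$ is exact and preserves arbitrary coproducts, and for $X = S^0_G$ the canonical map $S^0_G \wedge L_{v_1}S^0_G \cong L_{v_1}S^0_G \to L_{v_1}(S^0_G) = L_{v_1}S^0_G$ is the identity. More generally, for $X = \SigmaGH$ the map $\SigmaGH \wedge L_{v_1}S^0_G \to L_{v_1}\SigmaGH$ is an equivalence: indeed $\SigmaGH \wedge L_{v_1}S^0_G$ is $v_1$-local because its derived $K$-fixed points for each $K \leq G$ are, by the tom Dieck / double-coset description of $(\SigmaGH \wedge \varepsilon^* L_1S^0)^K$, a finite wedge of suspensions of $K$-local spectra and hence $K$-local — alternatively, one invokes that $L_{v_1}$ preserves compactness (Proposition \ref{generators}) and that smashing a local object with a compact object keeps it local. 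Then one checks the map $\SigmaGH \wedge L_{v_1}S^0_G \to L_{v_1}\SigmaGH$ is a $v_1$-local equivalence, which reduces to the non-equivariant statement that $S^0 \wedge L_1S^0 \to L_1 S^0$ is a $K$-equivalence, i.e. a tautology. Since both $- \wedge L_{v_1}S^0_G$ and $L_{v_1}(-)$ are exact and commute with coproducts, and they agree (naturally) on the set of compact generators $\{\SigmaGH \mid H \leq G\}$ of $\Ho(\SpG)$, a standard cell-induction / localising-subcategory argument shows the natural transformation $- \wedge L_{v_1}S^0_G \Rightarrow L_{v_1}(-)$ is an isomorphism on all of $\Ho(\SpG)$.

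Concretely, the cell-induction step: the class of $X$ for which $X \wedge L_{v_1}S^0_G \to L_{v_1}X$ is an isomorphism is closed under (de)suspension, cofibre sequences, retracts and arbitrary coproducts (using exactness of both sides plus the coproduct-preservation from the smashing property), and it contains the generators $\SigmaGH$; since these generate $\Ho(\SpG)$ as a localising subcategory, the class is everything. Alternatively, one can simply cite \cite[Definition 3.3.2]{HPS} and \cite[Section 3.3]{HPS}, where it is shown that a smashing localisation $L$ automatically satisfies $LX \cong X \wedge LS^0$ for all $X$ — this is essentially the definition — and then combine with Proposition \ref{localsphere}. I expect the only mild subtlety, and hence the ``main obstacle'' if any, is bookkeeping the definition of smashing: making sure that ``$L_{v_1}$ preserves coproducts'' as proved in Proposition \ref{generators} is matched up correctly with the tensor-triangulated notion of smashing in \cite{HPS} so that the formula $L_{v_1}X \cong X \wedge L_{v_1}S^0_G$ is genuinely a consequence rather than a reformulation — but since the paper has already invoked \cite[Definition 3.3.2]{HPS} for exactly this purpose in the proof of Proposition \ref{generators}, the Corollary follows with essentially no further work, which is why it is stated with a \qed and no proof.
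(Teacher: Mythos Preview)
Your proposal is correct and matches the paper's intent: the corollary is stated with a bare \qed\ precisely because it is an immediate consequence of Proposition~\ref{generators} (smashing) together with Proposition~\ref{localsphere}, exactly as you explain. Your elaboration of the cell-induction argument and the bookkeeping around \cite[Definition 3.3.2]{HPS} is more detail than the paper provides, but it is the correct unpacking of why no further argument is needed.
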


From now on when working in $\Ho(\LSpG)$ we will sometimes identify $X$ with $L_{v_1}X$ and $X \wedge L_{v_1}S^0_G$ depending on context and convenience.

Below we list some classical facts from equivariant stable homotopy theory. The references are for example \cite{LewMaySte86, ManMay, HHR, Sch16}. Because of the good properties of $v_1$-localisation these facts just pass to the $v_1$-local $G$-equivariant stable homotopy theory.

Recall that $\Phi^G: \Ho(\SpG) \rightarrow \Ho(\Sp)$ denotes the (derived) geometric fixed points functor (see e.g. \cite[Section V.4]{ManMay}). We will use that $\Phi^G \circ \varepsilon^*$ is isomorphic to the identity functor and $\Phi^G$ is monoidal. An immediate consequence of these properties and the previous result is the following:  

\begin{corollary} \label{geofixedpt} For any subgroup $H$ of $G$, there is an isomorphism in $\Ho(\Sp)$
$$\Phi^H(L_{v_1}X)\cong L_1 \Phi^H(X).$$ \qed \end{corollary}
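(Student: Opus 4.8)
The plan is to reduce the statement to the three facts already collected just above it: that $\Phi^H \circ \varepsilon^*$ is isomorphic to the identity on $\Ho(\Sp)$, that $\Phi^H$ is monoidal, and the preceding Corollary which identifies $L_{v_1}X$ with $X \wedge \varepsilon^* L_1 S^0$ in $\Ho(\SpG)$. First I would apply $\Phi^H$ to the isomorphism $L_{v_1}X \cong X \wedge \varepsilon^* L_1 S^0$, using monoidality of $\Phi^H$ to obtain
\[
\Phi^H(L_{v_1}X) \cong \Phi^H(X) \wedge \Phi^H(\varepsilon^* L_1 S^0) \cong \Phi^H(X) \wedge L_1 S^0,
\]
where the second isomorphism uses $\Phi^H \varepsilon^* \cong \mathrm{id}$. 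So it remains to identify $\Phi^H(X) \wedge L_1 S^0$ with $L_1 \Phi^H(X)$, i.e. to recall that smashing with the $K$-local sphere computes $K$-localisation. This last point is exactly the statement that $L_1$ is a smashing localisation on $\Ho(\Sp)$, which is classical (Bousfield, Ravenel); explicitly, for any spectrum $Y$ one has $L_1 Y \cong Y \wedge L_1 S^0$ in $\Ho(\Sp)$, and applying this with $Y = \Phi^H(X)$ finishes the argument.

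The one genuine subtlety is that monoidality of $\Phi^H$ is a priori a statement about the \emph{categorical} geometric fixed points applied to cofibrant objects, so I would be a little careful that the smash product $X \wedge \varepsilon^* L_1 S^0$ appearing in the Corollary is the derived smash product and that $\Phi^H$ as used here is the derived geometric fixed points, which is strong monoidal on $\Ho(\SpG)$ (see \cite{ManMay}, Section V.4, or \cite{Sch16}). Since $\varepsilon^* L_1 S^0$ is cofibrant (or can be taken so) and $\Phi^H$ commutes with the relevant cofibrant replacements up to the natural monoidality isomorphism, the displayed chain of isomorphisms is valid in $\Ho(\Sp)$. I do not expect any real obstacle here; the whole proof is a two-line formal manipulation once the Corollary and the smashing property of $L_1$ are in hand, and indeed this Corollary is flagged in the excerpt as an ``immediate consequence'' of those properties.

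In writing it up I would simply chain the isomorphisms:
\[
\Phi^H(L_{v_1}X) \;\cong\; \Phi^H(X \wedge \varepsilon^* L_1 S^0) \;\cong\; \Phi^H(X) \wedge \Phi^H(\varepsilon^* L_1 S^0) \;\cong\; \Phi^H(X) \wedge L_1 S^0 \;\cong\; L_1 \Phi^H(X),
\]
citing the Corollary for the first isomorphism, monoidality of $\Phi^H$ for the second, $\Phi^H \varepsilon^* \cong \mathrm{id}$ for the third, and the smashing property of $K$-localisation for the fourth. One could also note naturality in $X$ if needed later, which is automatic since each isomorphism in the chain is natural.
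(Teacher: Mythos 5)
Your proof is correct and is exactly the argument the paper intends: the Corollary is stated there as an "immediate consequence" of the identification $L_{v_1}X \cong X \wedge \varepsilon^* L_1S^0$, monoidality of the derived geometric fixed points, $\Phi^H \circ \varepsilon^* \cong \mathrm{id}$, and the classical fact that $L_1$ is smashing, which is precisely the chain of isomorphisms you write down. No gaps.
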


Finally, for genuine  fixed points we have:
\begin{corollary} \label{fixedpt} For any subgroup $H$ of $G$, there is an isomophism in $\Ho(\Sp)$
$$(L_{v_1}X)^H  \cong L_1(X^H).$$ \end{corollary}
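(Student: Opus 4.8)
The plan is to reduce the genuine fixed point statement to the geometric fixed point statement of Corollary \ref{geofixedpt} via the \emph{isotropy separation} (or \emph{tom Dieck splitting}) picture of categorical fixed points. First I would recall that for an orthogonal $G$-spectrum $X$ and a subgroup $H \leq G$, the derived $H$-fixed points $X^H$ assemble, through the isotropy filtration of $E\mathcal{P}(H)$, out of geometric fixed point pieces $\Phi^{K}(X)$ for subgroups $K$ with $H$ normal in $K$; concretely, one has the natural cofibre sequence $(\widetilde{E\mathcal{P}}(H) \wedge X)^H \to X^H \to (E\mathcal{P}(H)_+ \wedge X)^H$, whose left-hand term is $\Phi^H(X)$ smashed with Weyl-group data and whose right-hand term is built from proper subgroups. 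Since $L_1$ is an exact (triangulated) functor and smashing, and since $L_{v_1}$ commutes with $\wedge$ with cofibre sequences (Proposition \ref{generators}), both $(-)^H$ and $L_1(-)^H$ take this cofibre sequence to a cofibre sequence, so it suffices to establish the claim for each geometric-fixed-point building block.

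Concretely, I would carry out the following steps in order. \textbf{Step 1:} record that $L_{v_1}X \cong X \wedge \varepsilon^* L_1 S^0$ in $\Ho(\SpG)$ (the Corollary after Proposition \ref{localsphere}), so that everything reduces to understanding $(X \wedge \varepsilon^* L_1 S^0)^H$. \textbf{Step 2:} note that genuine fixed points $(-)^H$ commute with the relevant smash products up to the tom Dieck-type decomposition; precisely, for any $G$-spectrum $Y$ there is a natural equivalence $Y^H \simeq \bigvee_{(K)} (\text{something involving } \Phi^K Y)$ where the wedge runs over conjugacy classes of subgroups $K \leq N_G H$ containing $H$ as a normal subgroup, with a Weyl-group homotopy orbit twist. \textbf{Step 3:} apply $\Phi^K$ to $L_{v_1}X = X \wedge \varepsilon^* L_1 S^0$ using that $\Phi^K$ is monoidal and $\Phi^K \varepsilon^* \simeq \mathrm{id}$, to get $\Phi^K(L_{v_1}X) \cong \Phi^K(X) \wedge L_1 S^0 \cong L_1 \Phi^K(X)$, which is exactly Corollary \ref{geofixedpt}. \textbf{Step 4:} feed this back into the decomposition of Step 2: since $L_1$ commutes with finite wedges and with the homotopy-orbit construction by a finite group (finite homotopy colimits, using exactness of $L_1$), one gets $(L_{v_1}X)^H \cong L_1(X^H)$.

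An alternative and perhaps cleaner route, which I would actually prefer for the write-up, bypasses tom Dieck entirely and argues directly. We already know from the first bullet point after the definition of $L_{v_1}$ that $X$ is $v_1$-local if and only if $X^H$ is $K$-local for all $H \leq G$; hence $(L_{v_1}X)^H$ is automatically $K$-local, so there is a canonical comparison map $L_1(X^H) \to (L_{v_1}X)^H$ factoring the unit $X^H \to (L_{v_1}X)^H$. To see it is an isomorphism it suffices to check that $X^H \to (L_{v_1}X)^H$ is a $K$-local (i.e.\ $v_1$-local, i.e.\ $K_{(2)}$-homology) equivalence. Both sides are exact functors of $X$ commuting with arbitrary coproducts (the latter by Proposition \ref{generators} for $L_{v_1}$, and because $(-)^H$ on the genuine level commutes with coproducts of cofibrant objects), so it is enough to verify this on the compact generators $\SigmaGK$, $K \leq G$. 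There $(\SigmaGK)^H$ is, by the classical Segal splitting, a finite wedge of suspension spectra of the form $\Sigma^\infty_+ (W_H K \text{-orbit stuff})$, and $(L_{v_1}\SigmaGK)^H \simeq L_1((\SigmaGK)^H)$ is its $K$-localisation, so the comparison map is a $K$-equivalence by construction.

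The main obstacle is Step 2/Step 3 bookkeeping in the first approach — making the Weyl-group and indexing data in the tom Dieck splitting precise enough that one can legitimately pull $L_1$ past the finite wedge and the finite homotopy orbits, and checking that all the natural transformations in sight are compatible (so that the isomorphism is the obvious one, not just an abstract one). In the second approach the analogous difficulty is repackaged as: verifying that $X \mapsto (L_{v_1}X)^H$ really does commute with arbitrary coproducts and that the induced map agrees with $L_1$ applied to the Segal-splitting pieces — but this is lighter, since $L_{v_1}$ being smashing (Proposition \ref{generators}) does most of the work, and Corollary \ref{fixedpt} then follows with only a finite-wedge computation on generators. I would therefore present the second argument as the proof and perhaps remark that it also re-derives Corollary \ref{geofixedpt}.
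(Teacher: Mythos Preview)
Your second approach is correct and would do the job, but both routes are substantially more elaborate than what the paper actually does. The paper's proof is a one-line application of the \emph{projection formula} for fixed points: for any $H$-spectrum $Z$ and any non-equivariant spectrum $Y$ one has a natural isomorphism $(Z \wedge \varepsilon^* Y)^H \cong Z^H \wedge Y$ in $\Ho(\Sp)$. Combined with Proposition~\ref{localsphere} (so $L_{v_1}X \cong X \wedge \varepsilon^* L_1 S^0$) and the fact that $L_1$ is smashing non-equivariantly, this gives
\[
(L_{v_1}X)^H \;\cong\; (X \wedge \varepsilon^* L_1 S^0)^H \;\cong\; X^H \wedge L_1 S^0 \;\cong\; L_1(X^H)
\]
immediately, with no need for isotropy separation, a reduction to generators, or any inductive bookkeeping. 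Your Approach~2 essentially reproves this projection formula by checking on generators, which is fine but circuitous.

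A word of caution about Approach~1: the tom Dieck splitting as you state it in Step~2 does \emph{not} hold for arbitrary $G$-spectra $Y$---it is a statement about suspension spectra. For general $Y$ one only has the isotropy separation \emph{filtration} (cofibre sequences, not a wedge decomposition), and the indexing should run over conjugacy classes of subgroups $K \leq H$, not over $K \geq H$ with $H \trianglelefteq K$. One can still push an inductive argument through cofibre sequences, but the write-up as it stands would need correction on both points before it is valid.
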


\begin{proof} The proof uses Proposition \ref{generators} and Proposition \ref{localsphere}. It also uses the isomorphism $(Z \wedge \varepsilon^*Y)^H \cong Z^H \wedge Y$ in $\Ho(\Sp)$ for any spectrum $Y$ and an $H$-spectrum $Z$. \end{proof}

Next, we list the following facts:

\medskip
{\bf $v_1$-local tom Dieck splitting.} For a based $G$-CW complex $X$, the tom Dieck splitting says that 
\[
(\Sigma^\infty X)^G \cong \bigvee_{(H) \leq G} \Sigma^\infty(EW_G(H)_+ \wedge_{W_G(H)} X^H)
\]
in $\Ho(\Sp)$, where $W_G(H)$ is the Weyl group of $H$ in $G$. Because of the compatibility of localisation with fixed points and because $L_1$ is smashing, this gives us a chain of isomorphisms in $\Ho(L_1\Sp)$ 
\begin{eqnarray}
(L_{v_1}\Sigma^\infty X)^G \cong L_1((\Sigma^\infty X)^G) \cong L_1 (\bigvee_{(H) \leq G} \Sigma^\infty(EW_G(H)_+ \wedge_{W_G(H)} X^H)) \nonumber\\ \cong \bigvee_{(H) \leq G}L_1( \Sigma^\infty(EW_G(H)_+ \wedge_{W_G(H)} X^H)). \nonumber
\end{eqnarray}


\medskip
{\bf Restriction and external transfer.} Here we use Lemma \ref{lem:factor}. The Quillen adjunction 
\[
G \ltimes_H -: \SpH \lradjunction \SpG: \Res^G_H
\]
is also a Quillen adjunction 
\[
G \ltimes_H -: \LSpH \lradjunction \LSpG: \Res^G_H.
\]
The restriction $\Res^G_H$ sends $v_1$-local spectra to $v_1$-local spectra by definition.
Similarily, 
\[
 \Res^G_H: \LSpG \lradjunction \LSpH: \Map_H(G_+,-)
 \]
 is a Quillen adunction. For this, we have to show that $\Map_H(G_+, L_{v_1}X)$ is $v_1$-local, i.e. 
\[ 
 [\SigmaGH \wedge V(1), \Map_H(G_+, L_{v_1}X)]^G_*=0,
\] 
 where $V(1)$ is the cofibre of $\vv$. But 
\begin{eqnarray}
[\SigmaGH \wedge V(1), \Map_H(G_+, L_{v_1}X)]^G_* &\cong&[\Res^G_H(\SigmaGH \wedge V(1)), L_{v_1}X]^H_*,\nonumber\\& \nonumber
\end{eqnarray}
which is trivial by the double coset formula and because $L_{v_1}X$ is a $v_1$-local $H$-spectrum. 

\medskip
{\bf $v_1$-local Wirthm{\"u}ller isomorphism.}
The classical Wirthm{\"u}ller isomorphism asserts that the canonical map $$w_H(X): G \ltimes_H X \xrightarrow{\sim} \Map_H(G_+,X)$$ is a weak equivalence. In other words, the left and right adjoints of the restriction on the homotopy category are isomorphic. This passes through the $v_1$-local categories since any weak equivalence is in particular a $v_1$-local equivalence.

\medskip
{\bf $v_1$-local double coset formula.} The classical double coset formula is the isomophism
$$\bigvee_{[g] \in K \setminus G / H} K \ltimes_{{}^g H \cap K } \Res^{{}^g H}_{{}^g H \cap K} (c_g^*X) \xrightarrow{\cong} \Res^G_K (G \ltimes_H X),$$
which also passes to the $v_1$-local categories. Here $c_g$ is the conjugation map $g^{-1}(-)g$ for $g \in G$.

\medskip
{\bf $v_1$-local inflation.} A short exact sequence of groups 
\[
1 \longrightarrow N \xrightarrow{\iota} G \xrightarrow{\varepsilon} J \longrightarrow 1
\]
induces a Quillen adjunction 
\[
\varepsilon^*: \SpJ \lradjunction \SpG: {(-)}^N.
\]
It also induces a Quillen adjunction 
\[
\varepsilon^*: \LSpJ \lradjunction \LSpG: {(-)}^N
\]
because ${(-)}^N$ sends $v_1$-local $G$-spectra to $v_1$-local $J$-spectra (Corollary \ref{fixedpt}). We will often consider non-equivariant spectra as $G$-equivariant spectra via the inflation functor $\varepsilon^*: \Ho(\Sp) \longrightarrow \Ho(\SpG)$ and somtimes skip the symbol $\varepsilon^*$.

We also note that Corollary \ref{geofixedpt} implies that there is a $v_1$-local geometric fixed points functor $$\Phi^N \colon \Ho(\LSpG) \rightarrow \Ho(\LSpJ)$$ and a triangulated natural isomorphism $$id \cong \Phi^N\circ\varepsilon^*$$ of endofunctors on $\Ho(\LSpJ).$

\medskip
{\bf Families and $v_1$-localisation.}
Let $\mathcal{F}$ be a family of subgroups of $G$. There is a stable model category of $G$-spectra $\Sp_{G,\mathcal{F}}$ based on the family $\mathcal{F}$ \cite[Section IV.6]{ManMay} which comes with a Quillen adjunction 
\[id: \Sp_{G,\mathcal{F}} \lradjunction \SpG: id.\]
Using Corollary \ref{fixedpt} and Lemma \ref{lem:factor} we see that this adjunction passes through $v_1$-localisations.



 \begin{rmk} \label{otherprimes}
 The constructions and results of this section do not depend on the prime 2. An analogous construction can be performed at odd primes using $$v_1: \Sigma^{2p-2}M \longrightarrow M$$ instead of $v_1^4$, and the same properties hold. We are using this in Section \ref{sec:exotic}.
 \end{rmk}

\section{The Quillen functor}\label{sec:quillenfunctor}

In order to obtain the Quillen equivalence of our main theorem, we first need a Quillen functor of which we will show in Section \ref{sec:quillenequivalence} that it is in fact an equivalence. The first author showed in \cite[Proposition 3.1.2]{P16} that without loss of generality, our (proper, cofibrantly generated) $G$-equivariant stable model category $\mathcal{C}$ can be replaced by a Quillen equivalent $G$-spectral model category, i.e. by an $\SpG$-model category in the sense of \cite{Dug01}. So from now on, let us assume $\mathcal{C}$ to be a proper, cofibrantly generated $\SpG$-model category. This means that for any cofibrant $X \in \LSpG$, the $G$-spectral structure provides us with a Quillen functor
\[
- \wedge X: \SpG \lradjunction \mathcal{C}: \Hom(X,-).
\]
The model category $\LSpG$ has the same underlying category as $\SpG$ but a different model structure, so it is not clear if 
\[
- \wedge X: \LSpG \lradjunction \mathcal{C}: \Hom(X,-).
\]
is also a Quillen adjunction. For this we will need the following lemma.

\begin{lemma}\label{lem:factor}
Let $G: \mathcal{M} \longrightarrow \mathcal{N}$ be a right Quillen functor of model categories $\mathcal{M}$ and $\mathcal{N}$, and let $L_S \mathcal{N}$ be a left Bousfield localisation of $\mathcal{N}$. Then $G: \mathcal{M} \longrightarrow L_S\mathcal{N}$ is a right Quillen functor if and only if $G$ sends fibrant objects to $S$-local objects. 
\end{lemma}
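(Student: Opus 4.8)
The statement is a standard criterion for when a right Quillen functor descends to a Bousfield localisation, so the plan is to reduce everything to the two defining properties of a right Quillen functor: preserving fibrations and preserving acyclic fibrations. First I would recall that, as a left Bousfield localisation, $L_S\mathcal{N}$ has the same underlying category and the same cofibrations as $\mathcal{N}$, the weak equivalences of $L_S\mathcal{N}$ are the $S$-local equivalences (which contain all the weak equivalences of $\mathcal{N}$), and the fibrant objects of $L_S\mathcal{N}$ are precisely the $S$-local objects that are fibrant in $\mathcal{N}$. In particular a map between fibrant objects of $L_S\mathcal{N}$ is a fibration in $L_S\mathcal{N}$ if and only if it is a fibration in $\mathcal{N}$, and the acyclic fibrations of $L_S\mathcal{N}$ coincide with those of $\mathcal{N}$ (since a map is an acyclic fibration iff it has the right lifting property against all cofibrations, and cofibrations agree). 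I would state these facts with a reference to \cite{Hir03}.

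\textbf{The easy direction.} If $G \colon \mathcal{M} \to L_S\mathcal{N}$ is right Quillen, then its left adjoint $F$ is left Quillen with respect to $L_S\mathcal{N}$; since $F$ takes cofibrant objects of $\mathcal{M}$ to cofibrant objects, and every object of a model category is weakly equivalent to a cofibrant one, one sees (or simply invokes that a right Quillen functor preserves fibrant objects) that $G$ sends fibrant objects of $\mathcal{M}$ to fibrant objects of $L_S\mathcal{N}$, which by the description above are exactly the $S$-local objects (underlying fibrant in $\mathcal{N}$). Alternatively and more directly: $G$ preserves acyclic fibrations for $\mathcal{N}$, hence for $\mathcal{N}$ the terminal map $X \to \ast$ of a fibrant $X$ maps to a fibration, and being right Quillen for $L_S\mathcal{N}$ forces this image to be a fibration in $L_S\mathcal{N}$, i.e. $GX$ is fibrant in $L_S\mathcal{N}$, i.e. $S$-local. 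This direction is essentially immediate.

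\textbf{The main direction.} For the converse, assume $G$ is right Quillen for $\mathcal{N}$ and sends fibrant objects to $S$-local objects; I must show $G$ is right Quillen for $L_S\mathcal{N}$, i.e. that $G$ preserves acyclic fibrations and fibrations of $L_S\mathcal{N}$. Acyclic fibrations are no problem: they coincide with the acyclic fibrations of $\mathcal{N}$, which $G$ preserves by hypothesis. The crux is fibrations. By Hirschhorn's characterisation, a map $f\colon X \to Y$ in $\mathcal{N}$ is a fibration in $L_S\mathcal{N}$ if and only if it is a fibration in $\mathcal{N}$ and the square obtained by fibrantly replacing (in $L_S\mathcal{N}$) $X$ and $Y$ is a homotopy pullback in $\mathcal{N}$ — but a cleaner route is to use the functorial description: $f$ is an $L_S\mathcal{N}$-fibration iff it is an $\mathcal{N}$-fibration such that for some (hence any) factorisation into a trivial cofibration followed by a fibration with $S$-local target in the appropriate sense the relevant square is a homotopy pullback. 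Rather than fight with this, I would argue via acyclic cofibrations of $L_S\mathcal{N}$: a map is a fibration in $L_S\mathcal{N}$ iff it has the right lifting property against all maps that are both cofibrations and $S$-local equivalences. So take $f = G(p)$ with $p\colon X\to Y$ a fibration in $\mathcal{M}$, and let $j\colon A \to B$ be a cofibration and $S$-local equivalence in $\mathcal{N}$; by adjunction a lift in the square for $G(p)$ against $j$ corresponds to a lift in the square for $p$ against $F(j)$, where $F$ is the left adjoint. Now $F(j)$ is a cofibration in $\mathcal{M}$ (since $F$ is left Quillen for $\mathcal{N}$, preserving cofibrations), so it suffices to know $F(j)$ is a weak equivalence in $\mathcal{M}$; then $p$, being a fibration in $\mathcal{M}$, has the required lifting property.

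\textbf{Closing the gap.} The remaining point — that $F$ carries cofibrations which are $S$-local equivalences to weak equivalences of $\mathcal{M}$ — is exactly where the hypothesis on $G$ is used: since the left Bousfield localisation $F \colon L_S\mathcal{N} \to \mathcal{M}$ being left Quillen is \emph{equivalent} to $G$ being right Quillen, this looks circular, so instead I would prove it by hand. A cofibration $j$ that is an $S$-local equivalence has the left lifting property against all fibrations between $S$-local fibrant objects of $\mathcal{N}$; equivalently, for every $S$-local fibrant $Z$ the map $\mathrm{map}(B,Z)\to \mathrm{map}(A,Z)$ is a weak equivalence. To show $F(j)$ is a weak equivalence in $\mathcal{M}$, it is enough (e.g. by Ken Brown's lemma and Whitehead, using that $\mathcal{M}$ is a model category) to test against fibrant objects: for every fibrant $W \in \mathcal{M}$, the map $\mathcal{M}(F(B),W) \to \mathcal{M}(F(A),W)$ on derived mapping spaces is a weak equivalence. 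By adjunction this is $\mathcal{N}(B,G(W)) \to \mathcal{N}(A, G(W))$, and $G(W)$ is $S$-local fibrant by hypothesis, so this is a weak equivalence precisely because $j$ is an $S$-local equivalence. Making the derived-mapping-space argument precise (passing through cofibrant/fibrant replacements, cosimplicial frames, and Ken Brown's lemma to conclude $F(j)$ is a genuine weak equivalence rather than merely a ``weak equivalence after mapping into fibrant objects'') is the one genuinely technical step; everything else is bookkeeping with the definition of left Bousfield localisation. I expect this to be short given the standing assumptions, and I would either spell it out or cite \cite{Hir03, BarRoi11b} for the general principle.
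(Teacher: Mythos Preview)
Your approach is different from the paper's, and the ``one genuinely technical step'' you flag at the end is a real gap rather than mere bookkeeping. The derived mapping-space argument shows only that $\mathbf{L}F(j)$ is an isomorphism in $\Ho(\mathcal{M})$; to conclude that $F(j)$ itself is a weak equivalence you would need $F(QA)\to F(A)$ and $F(QB)\to F(B)$ to be weak equivalences, and Ken Brown gives this only when $A$ and $B$ are already cofibrant. Since you must lift against \emph{all} $L_S\mathcal{N}$-acyclic cofibrations, not just those with cofibrant domain, the argument as written does not close. (It can be rescued under extra hypotheses, e.g.\ cofibrant generation of $L_S\mathcal{N}$ with generating acyclic cofibrations having cofibrant domains, but the lemma does not assume this.)

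The paper avoids the left adjoint entirely and instead uses the very fact you recorded in your preliminaries but then abandoned: a map between $L_S\mathcal{N}$-fibrant objects is an $L_S\mathcal{N}$-fibration iff it is an $\mathcal{N}$-fibration. Concretely, the paper first invokes \cite[A.2]{Dug01} to reduce to checking that $G$ sends fibrations \emph{between fibrant objects} to $L_S\mathcal{N}$-fibrations. Given such $g\colon X\to Y$, one factors $G(g)$ in $L_S\mathcal{N}$ as an acyclic cofibration $i\colon G(X)\to C$ followed by a fibration $p\colon C\to G(Y)$. Now $G(X)$ is $S$-local by hypothesis and $C$ is $L_S\mathcal{N}$-fibrant, so both are $S$-local; hence the $S$-local equivalence $i$ is already a weak equivalence in $\mathcal{N}$, i.e.\ an $\mathcal{N}$-acyclic cofibration. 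Since $G(g)$ is an $\mathcal{N}$-fibration, it lifts against $i$, and the lift exhibits $G(g)$ as a retract of the $L_S\mathcal{N}$-fibration $p$. This retract argument is short and needs no control over $F$ on non-cofibrant objects; it is worth adopting in place of your adjunction route.
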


\begin{proof}
It is clear that $G$ sends acyclic fibrations to acyclic fibrations as $\mathcal{N}$ and $L_S\mathcal{N}$ have the same cofibrations. 
If $X \in \mathcal{M}$ is fibrant, then $G(X)$ is fibrant. Also, $G(X)$ is $S$-local by assumption, so $G(X)$ is fibrant in $L_S\mathcal{N}$. We still have to show that $G$ sends fibrations to fibrations in $L_S\mathcal{N}$. So let $g: X \longrightarrow Y$ be a fibration in $\mathcal{M}$ between fibrant objects. This implies that $G(g): G(X) \longrightarrow G(Y)$ is a fibration in $\mathcal{N}$ between $L_S\mathcal{N}$-fibrant objects. We can factor $G(g)$ in $L_S\mathcal{N}$ as acyclic cofibration followed by fibration
\[
\xymatrix{ G(X) \ar@{>->}[r]^-{i}_-{\sim} & C \ar@{->>}[r]^-{p} & G(Y).
}
\]
As $C$ and $G(X)$ are fibrant in $L_S \mathcal{N}$ and $i$ is an $S$-equivalence, $i$ is also a weak equivalence in $\mathcal{N}$. As it is also a cofibration, it is thus an acyclic cofibration in $\mathcal{N}$. This implies that there is a lift $H$ in the following diagram
\[
\xymatrix{ G(X) \ar@{=}[r]\ar@{>->}[d]_{i} & G(X) \ar[d]^{G(g)} \\
C \ar[r]_{p}\ar@{.>}[ur]^{H} & G(Y).
}
\]
This means that there is a retract in $L_S\mathcal{N}$
\[
\xymatrix{ G(X) \ar[d]_{G(g)} \ar[r]^{i}& C\ar@{->>}[d]_{p}\ar[r]^{H} & G(X) \ar[d]^{G(g)} \\
G(Y)\ar@{=}[r] & G(Y) \ar@{=}[r]& G(Y).
}
\]
In other words, $G(g)$ is a retract of a fibration in $L_S\mathcal{N}$ and thus a fibration itself, which is what we wanted to prove. By \cite[A.2]{Dug01}, a functor that preserves trivial fibrations as well as fibrations between fibrant objects is a right Quillen functor.
\end{proof}

\begin{corollary}
The adjunction
\[
- \wedge X: \LSpG \lradjunction \mathcal{C}: \Hom(X,-).
\]
is a Quillen adjunction if and only if $\mathbf{R}\Hom(X,Y)$ is a $v_1$-local $G$-spectrum for all $Y \in \mathcal{C}$. 
\end{corollary}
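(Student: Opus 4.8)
The plan is to deduce this directly from Lemma~\ref{lem:factor}. We already know from the $\SpG$-model structure on $\mathcal{C}$ that
\[
- \wedge X: \SpG \lradjunction \mathcal{C}: \Hom(X,-)
\]
is a Quillen adjunction, with $\Hom(X,-)$ the right Quillen functor. Since $\LSpG$ is a left Bousfield localisation of $\SpG$ sharing its cofibrations, Lemma~\ref{lem:factor}, applied with $\mathcal{M}=\mathcal{C}$, $\mathcal{N}=\SpG$, $L_S\mathcal{N}=\LSpG$ and the right Quillen functor $\Hom(X,-)$, says that
\[
- \wedge X: \LSpG \lradjunction \mathcal{C}: \Hom(X,-)
\]
is a Quillen adjunction if and only if $\Hom(X,Y)$ is a $v_1$-local $G$-spectrum for every fibrant $Y \in \mathcal{C}$.

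It then remains to identify this condition with the one in the statement. First, because $X$ is cofibrant in $\SpG$ and $\Hom(X,-)$ is right Quillen, for any fibrant $Y$ the object $\Hom(X,Y)$ is fibrant in $\SpG$ and computes the right derived functor, i.e.\ $\Hom(X,Y)\cong \mathbf{R}\Hom(X,Y)$ in $\Ho(\SpG)$; so the condition coming from Lemma~\ref{lem:factor} reads ``$\mathbf{R}\Hom(X,Y)$ is $v_1$-local for every fibrant $Y$''. Secondly, since being $v_1$-local is a property of an object of $\Ho(\SpG)$ invariant under weak equivalence, and since every $Y\in\mathcal{C}$ admits a fibrant replacement $Y \xrightarrow{\sim} Y^f$ with $\mathbf{R}\Hom(X,Y)\cong\mathbf{R}\Hom(X,Y^f)$, the quantifier ``for every fibrant $Y$'' may be replaced by ``for every $Y\in\mathcal{C}$''. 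This yields the claimed equivalence.

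The argument is entirely formal, and there is no genuine obstacle: the only point needing a (routine) check is the passage between fibrant $Y$ and arbitrary $Y$, handled by fibrant replacement as above, together with the understanding that ``$v_1$-local'' in the conclusion is meant in the homotopy-invariant sense, i.e.\ as a condition on $\mathbf{R}\Hom(X,Y)$ regarded in $\Ho(\SpG)$ (equivalently, on its image in $\Ho(\LSpG)$).
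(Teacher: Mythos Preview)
Your proof is correct and is exactly the intended argument: the paper states this corollary immediately after Lemma~\ref{lem:factor} without proof, treating it as a direct application, and you have simply spelled out that application together with the routine passage from fibrant $Y$ to arbitrary $Y$ via fibrant replacement. One small slip: $X$ is cofibrant in $\mathcal{C}$, not in $\SpG$; what matters is that $\Hom(X,-)$ is right Quillen (since $\mathcal{C}$ is an $\SpG$-model category and $X$ is cofibrant in $\mathcal{C}$), so it sends fibrant objects of $\mathcal{C}$ to fibrant objects of $\SpG$, which is all you use.
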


Recall that we assume an equivalence of triangulated categories
\[
\Psi: \Ho(\LSpG) \longrightarrow \Ho(\mathcal{C})
\]
together with isomorphisms
\[
\Psi(L_{v_1}\SigmaGH) \cong G/H_+ \wedge^\mathbf{L} \Psi(L_{v_1}S^0_G) \,\,\,\mbox{for all subgroups} \,\, H \leq G
\]
which are natural with respect to the restrictions, conjugations and transfers. From now on, let $X$ be a fibrant and cofibrant replacement of $\Psi(L_{v_1}S^0_G)$ in $\mathcal{C}$. We want to show the following.

\begin{proposition}\label{prop:mainprop}
Let $X$ be as above. Then the $G$-spectrum $\mathbf{R}\Hom(X,Y)$ is $v_1$-local for all $Y \in \C$. 
\end{proposition}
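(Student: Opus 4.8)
The assertion is that $\mathbf{R}\Hom(X,Y)$ is $v_1$-local for every $Y \in \C$, and by the first bullet point following the definition of $\LSpG$ this is equivalent to the vanishing of $[\SigmaGH \wedge V(1),\, \mathbf{R}\Hom(X,Y)]^{G}_*$ for all $H \leq G$, where $V(1)$ denotes the cofibre of $\vv\colon \Sigma^8 M \to M$, regarded as a $G$-spectrum by inflation. By the derived adjunction $(-\wedge X) \dashv \Hom(X,-)$ (recall $X$ is cofibrant) this group is $[(\SigmaGH \wedge V(1)) \wedge^{\mathbf{L}} X,\, Y]^{\C}_*$, so the plan is to prove the cleaner statement that $(\SigmaGH \wedge V(1)) \wedge^{\mathbf{L}} X = 0$ in $\Ho(\C)$ for all $H \leq G$. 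This is in fact equivalent to the proposition: $-\wedge^{\mathbf{L}}X$ is exact and preserves coproducts, and the $v_1$-acyclic objects form the localising subcategory generated by the $\SigmaGH \wedge V(1)$, so the displayed vanishing says exactly that $-\wedge^{\mathbf{L}}X$ factors through $\Ho(\LSpG)$.

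Next I would transport this through $\Psi$. Set $F := \Psi^{-1} \circ (-\wedge^{\mathbf{L}}X)\colon \Ho(\SpG) \to \Ho(\LSpG)$, an exact, coproduct-preserving functor. Since $L_{v_1}$ is smashing (Proposition~\ref{generators}) we have $G/H_+ \wedge^{\mathbf{L}} L_{v_1}S^0_G \cong L_{v_1}\SigmaGH$, so the hypotheses on $\Psi$ provide isomorphisms $F(\SigmaGH) \cong L_{v_1}\SigmaGH$ natural in the restrictions, conjugations and transfers. As $\Psi$ is an equivalence, the vanishing above is equivalent to $F(\SigmaGH \wedge V(1)) = 0$, i.e.\ — applying the exact functor $F$ to the triangle $\SigmaGH \wedge \Sigma^8 M \xrightarrow{\vv} \SigmaGH \wedge M \to \SigmaGH \wedge V(1)$ — to the statement that $F(\SigmaGH \wedge \vv)$ is an isomorphism for every $H$. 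Writing $M = \Cone(2)$ and using that $F$ is exact and additive identifies $F(\SigmaGH \wedge M)$ with $\Cone\bigl(2\colon L_{v_1}\SigmaGH \to L_{v_1}\SigmaGH\bigr) = L_{v_1}(\SigmaGH \wedge M)$, compatibly with this cofibre triangle (and similarly after a shift by $8$); thus $F(\SigmaGH \wedge \vv)$ is a degree-$8$ self-map of $L_{v_1}\SigmaGH \wedge M$, and the task is to show it is invertible. Observe that the honest self-map $L_{v_1}(\SigmaGH \wedge \vv)$ \emph{is} invertible, because $\SigmaGH \wedge V(1)$ is $v_1$-acyclic: by the isomorphism $(Z \wedge \varepsilon^* Y)^K \cong Z^K \wedge Y$ and the smashness of $L_1$, each $(\SigmaGH \wedge V(1))^K$ is a smash with the $K$-theoretically trivial spectrum $V(1)$, hence $L_1$-acyclic, so $L_{v_1}(\SigmaGH \wedge V(1)) = 0$ by Corollary~\ref{fixedpt}. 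So the whole point is that $F$ does not destroy this invertibility.

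For $H = e$ this is the main calculation of Section~\ref{sec:calculation}: $L_{v_1}(\SigmaG \wedge M)$ is a free $v_1$-local $G$-spectrum, so restriction to the trivial subgroup together with the $v_1$-local Wirthm{\"u}ller isomorphism describes its graded endomorphisms in terms of $\pi_*(L_1 S^0)$, and on the part coming from the Burnside category $F$ coincides with the localisation map by naturality. The decisive input is that $\vv$ is characterised inside $[\SigmaG \wedge \Sigma^8 M,\, \SigmaG \wedge M]^{G}$ by relations and Toda brackets in $\pi_*(L_1 S^0)$, exactly as in \cite{Roi07}; since $F$ is exact it preserves these bracket relations, which forces $F(\SigmaG \wedge \vv)$ to agree up to a unit with $L_{v_1}(\SigmaG \wedge \vv)$, hence to be invertible. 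To pass from the free cell to a general orbit $G/H$ (the content of Section~\ref{sec:revisited}), I would test invertibility of $F(\SigmaGH \wedge \vv)$ on the graded groups $[L_{v_1}\SigmaGK, -]^{G,v_1}_*$, which are computed by the equivariant Atiyah--Hirzebruch / cellular-chains spectral sequence whose $E_2$-page is Bredon homology of $G/H$ assembled from $\pi_*(L_1 S^0)$; the free case controls this input, and a comparison of spectral sequences yields invertibility for all $H$, completing the proof.

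The main obstacle is the free case. Although $F$ agrees with $L_{v_1}$ on the objects $\SigmaGH$ and on all restriction, conjugation and transfer maps, the group $[\SigmaG \wedge \Sigma^8 M,\, \SigmaG \wedge M]^{G}$ is far from being generated by such maps — it involves the full image of $\pi_*^s$ — so it is only the triangulated (Toda-bracket) structure preserved by $F$, and not the additive structure alone, that can pin down $F(\SigmaG \wedge \vv)$; this is precisely the $v_1$-periodicity computation in $\pi_*(L_1 S^0)$. The subsequent equivariant Atiyah--Hirzebruch argument needed to descend from free cells to $G/H$ is the second substantial ingredient and is new to the rigidity context.
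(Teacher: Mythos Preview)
Your reduction to showing that $F(\SigmaGH\wedge\vv)$ is invertible for every $H\leq G$, with $F=\Psi^{-1}\circ(-\wedge^{\mathbf L}X)$, matches the paper exactly, and your account of the free case $H=e$ is essentially the paper's Corollary~\ref{cor:v1}: one computes $F(\mu)$ via Toda brackets, uses $2\vv=\incl\circ\mu\circ\pinch$ to obtain $2F(\vv)=2\vv$, and concludes $F(\vv)$ differs from $\vv$ only by $K_*$-trivial order-two terms.

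The gap is in the step from $H=e$ to arbitrary $H$. You propose to test $F(\SigmaGH\wedge\vv)$ on the groups $[L_{v_1}\SigmaGK,-]^{G,v_1}_*$ using an Atiyah--Hirzebruch spectral sequence with $E_2$-page ``Bredon homology of $G/H$ assembled from $\pi_*(L_1S^0)$''. But $G/H$ is a single orbit with a single cell of type $G/H$; its cellular filtration is trivial, so this spectral sequence collapses immediately and tells you nothing beyond the value of the coefficient Mackey functor at $G/H$ --- which the free case does not control. In particular the free case gives you the restriction of $F(\SigmaGH\wedge\vv)$ to the trivial subgroup, but says nothing about its $H$-fixed points.

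The paper's actual mechanism is an \emph{induction on $|H|$ via isotropy separation}, and the Atiyah--Hirzebruch argument enters at a different place. Assume the result for all proper $K<H$. Then (i) the class of $Z$ for which $F(Z\wedge\vv)$ is invertible is localising and contains all $\SigmaGK$ with $K\lneq H$, hence contains $\GEPH$, which is built from such cells. (ii) Lemma~\ref{Fequipres} --- \emph{this} is where the cellular-chains/Atiyah--Hirzebruch/Bredon homology input genuinely lives --- shows that $F$ sends the cellular $\PP(H|G)$-equivalence $E\PP(H)\to *$ to a $\PP(H|G)$-equivalence; combined with (i) this makes $F(\SigmaGH\wedge\vv)$ a $\PP(H|G)$-equivalence. (iii) It remains to check $\Phi^H$ of this map, which unwinds (via $\hat F$) to the free case for the Weyl group $W_G(H)$. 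So the spectral sequence is applied to the $G$-CW complex $E\PP(H)$, not to $G/H$; identifying this isotropy-separation/geometric-fixed-points induction is the missing idea.
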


We will perform some essential calculations in a separate section before proving this.

\section{The calculation}\label{sec:calculation}

This section deals with the computational part of our main argument. First, we need to recall the stable homotopy groups of the $2$-local sphere and of the $K$-local sphere  at the prime $2$ in low dimensions and their generators.

\begin{tabular}[h]{c | c |c}\label{table}
$k$ & $\pi_kS^0_{(2)}$ &  $\pi_k L_1 S^0$  \\ \hline
$0$ & $\mathbb{Z}_{(2)}\{\iota\}$ & $\mathbb{Z}_{(2)}\{\iota\} \oplus \mathbb{Z}/2\{y_0\}$ \\
$1$ &  $\mathbb{Z}/2\{\eta\}$ & $\mathbb{Z}/2\{\eta,y_1\}$ \\
$2$ &  $\mathbb{Z}/2\{\eta^2\}$ & $\mathbb{Z}/2\{\eta^2\}$ \\
$3$ & $\mathbb{Z}/8\{\nu\}$  & $\mathbb{Z}/8\{\nu\}$ \\
$4$ & 0  & 0 \\
$5$ & 0 & 0 \\
$6$ &  $\mathbb{Z}/2\{\nu^2\}$ & 0 \\
$7$ & $\mathbb{Z}/16\{\sigma\}$ & $\mathbb{Z}/16\{\sigma\}$\\
$8$ & $\mathbb{Z}/2\{\eta\sigma, \varepsilon\}$ & $\mathbb{Z}/2\{\eta\sigma\}$\\
$9$ & $\mathbb{Z}/2\{\eta^2\sigma, \eta\varepsilon, \mu\}$ & $\mathbb{Z}/2\{\eta^2\sigma,\mu\}$\\
\end{tabular}

We want to study the behaviour of these elements under certain exact functors
\[
F: \Ho(\SpG) \longrightarrow  \Ho(\LSpG) \,\,\,\mbox{and}\,\,\, \mathbb{F}: \Ho(\LSpG) \longrightarrow \Ho(\LSpG).
\]

\bigskip
We also make use of the relations
\begin{equation}\label{relations}
4\nu = \eta^3, \, y_1=\eta y_0, \,\eta y_1 = 0, \,y_0^2=0, \,y_1^2=0, \,\sigma y_1=0 \quad\quad\mbox{and}\quad \mu y_0 = \eta^2 \sigma
\end{equation}
(see \cite[8.15.(d)]{Rav84} and \cite[Corollary 4.5]{Bou79}) and the Toda brackets
\begin{eqnarray}
8\sigma =  \left<\nu,8,\nu\right>   & &   \nonumber\\
\mu \in  \left<2,8\sigma,\eta\right> & & \quad(\mbox{indeterminacy: } \eta^2\sigma, \eta \varepsilon). \nonumber \\
\end{eqnarray}

Note that $K$-locally the latter indeterminacy is just $\eta^2\sigma$, since $\varepsilon$ does not survive $K$-localisation.  
For further references, see
\cite{Tod62}, Lemma 5.13 and the tables in Chapter XIV. For more details on the endomorphisms of the $K$-local mod-2 Moore spectrum and their relations we also refer to \cite[Section 5]{Roi07}.

The following is well-known to experts but we are not aware of a reference in literature, so we provide a proof.

\begin{lemma} \label{v1relation} The composite
\[
\Sigma^8 M \xrightarrow{\pinch}  S^9 \xrightarrow{^\mu}  S^0 \xrightarrow{\incl}  M
\]
is equal to $2v_1^4$. 
 
\end{lemma}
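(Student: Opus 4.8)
The plan is to identify the composite $\incl \circ \mu \circ \pinch \colon \Sigma^8 M \to M$ inside the graded ring $[M,M]^{\Sp}_*$ of self-maps of the mod-$2$ Moore spectrum, where it lives in degree $8$. First I would recall the two basic degree-$0$ elements of this ring: the identity and the composite $e = \incl \circ \pinch \colon \Sigma M \to M$ shifted appropriately, together with the standard relation $2 \cdot \mathrm{id}_M = \eta \cdot e$ (equivalently $\eta$ acting through the attaching map of $M$), which encodes that $M$ does not admit a multiplication but its self-maps are controlled by $\pi_*S^0$ modulo the cell structure. The map $v_1^4$ itself is the essentially unique (up to the relevant indeterminacy) degree-$8$ self-map inducing an isomorphism on $K$-theory, and the point is to pin down $\incl \circ \mu \circ \pinch$ against it.

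The key computational step is to run the cofibre sequence $S^0 \xrightarrow{2} S^0 \xrightarrow{\incl} M \xrightarrow{\pinch} S^1$ and analyse the resulting long exact sequences. Composing with $\pinch$ on the source and $\incl$ on the target, a map $S^8 \to S^1$ of the form $\Sigma(\text{something in }\pi_7 S^0)$ determines a self-map of $\Sigma^8 M$; here the relevant element is $\mu \in \pi_9 S^0$ viewed as a map $S^9 \to S^0$, so the composite $\incl \circ \mu \circ \pinch$ corresponds to $\mu$ detected through both cells. I would then use the Toda bracket relation $\mu \in \langle 2, 8\sigma, \eta \rangle$ (with indeterminacy $\eta^2\sigma$ after $K$-localisation) recorded in the excerpt: the bracket $\langle 2, 8\sigma, \eta\rangle$ is exactly the obstruction-theoretic expression for a self-map of the Moore spectrum built from $8\sigma$, because the outer entries $2$ and $\eta$ are precisely the data of the two attaching maps appearing in $M \wedge M$ / the cell structure of $\Sigma^8 M$. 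Concretely, a nullhomotopy of $2 \cdot (8\sigma) = 16\sigma$ extends $8\sigma \colon S^7 \to S^0$ over $\Sigma^7 M$, and then pinching and using that $8\sigma \cdot \eta$ information assembles the bracket; unwinding this shows $\incl \circ \mu \circ \pinch$ is represented by $2v_1^4$ up to the indeterminacy, and since the indeterminacy $\eta^2\sigma$ maps to zero under the relevant identification (or is absorbed), we get equality $\incl \circ \mu \circ \pinch = 2v_1^4$.

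To make the comparison with $2v_1^4$ precise rather than just "up to indeterminacy", I would use the relation $8\sigma = \langle \nu, 8, \nu\rangle$ together with the known action of $v_1^4$ on the cells of $M$: namely $v_1^4$ restricted to the bottom cell is (a multiple related to) $8\sigma$ up to lower filtration, and $\pinch \circ v_1^4 \circ \incl$ recovers the appropriate element of $\pi_8 S^0$. The cleanest route is probably to compute $[M, M]^{\Sp}_8$ explicitly: it is a small abelian group assembled from $\pi_8 S^0$, $\pi_7 S^0$, $\pi_9 S^0$ via the exact sequences, and in it one checks directly that $\incl \circ \mu \circ \pinch$ and $2 v_1^4$ have the same image in each of the associated-graded pieces $\pi_8 S^0$ (via $\pinch^* \incl_*$) and that the difference, which a priori lies in the subgroup coming from $\pi_9 S^0/(2) \cdot \eta$ or $\pi_7 S^0$, actually vanishes because $y_1 = \eta y_0$ and $\eta y_1 = 0$ force the relevant correction term to be zero $K$-locally.

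\textbf{Main obstacle.} The hard part will be controlling the \emph{indeterminacy} of the Toda bracket $\langle 2, 8\sigma, \eta\rangle$ and its interaction with the cell structure of $M$: a priori the composite is only determined modulo $\eta^2\sigma$ (and non-$K$-locally also $\eta\varepsilon$), and one must argue that this ambiguity does not obstruct the \emph{equality} $\incl \circ \mu \circ \pinch = 2v_1^4$ — either because the ambiguous term genuinely dies in $[M,M]^{\Sp}_8$ after $K$-localisation (using $\eta y_1 = 0$, $\sigma y_1 = 0$ and that $\varepsilon$ is not $K$-local), or because a more careful choice of nullhomotopies in the bracket pins down the representative. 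A secondary subtlety is fixing conventions so that "$v_1^4$" here is literally the self-map chosen earlier in the paper (which is itself only well-defined up to a unit/indeterminacy), so the statement should be read as: for a suitable choice of $v_1^4$, the composite equals $2v_1^4$ exactly; I would remark on this normalisation explicitly.
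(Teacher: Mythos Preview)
Your overall strategy---using the Toda bracket $\mu \in \langle 2, 8\sigma, \eta\rangle$ and then dealing with its indeterminacy---is exactly the route the paper takes. The key link you allude to but never state precisely is Adams' computation that
\[
\pinch \circ v_1^4 \circ \incl = 8\sigma \in \pi_7 S^0,
\]
which is what lets one build a diagram exhibiting $\eta \circ \pinch \circ v_1^4 = t \circ \pinch$ for some $t$ in the bracket $\langle 2, 8\sigma, \eta\rangle$; composing with $\incl$ and using $\incl \circ \eta \circ \pinch = 2\cdot \mathrm{id}_M$ then gives $2v_1^4 = \incl \circ t \circ \pinch$ with $t \in \{\mu, \mu+\eta\varepsilon, \mu + \eta^2\sigma\}$. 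This step is clean once you write it down, and you should do so rather than gesture at ``unwinding''.

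The genuine gap is in your handling of the indeterminacy. You propose to kill the ambiguous terms $\eta\varepsilon$ and $\eta^2\sigma$ by passing to the $K$-local category and invoking relations like $\eta y_1 = 0$ or $\sigma y_1 = 0$; this is both unnecessary and does not actually do the job, since those relations concern the extra torsion classes $y_0,y_1 \in \pi_*(L_1 S^0)$ and say nothing about why $\incl \circ (\eta^2\sigma)\circ \pinch$ or $\incl\circ(\eta\varepsilon)\circ\pinch$ vanish in $[M,M]_8^{\Sp}$. The paper instead proves, entirely in $\pi_*S^0_{(2)}$, that $\incl \circ (\eta x) \circ \pinch = 0$ for \emph{every} $x \in \pi_8 S^0_{(2)}$: since $2x=0$ one lifts $x$ to $\widetilde{x}\colon S^8 \to \Sigma^{-1}M$ with $\pinch\circ\widetilde{x}=x$, and then
\[
\incl \circ \eta x \circ \pinch = \incl\circ\eta\circ\pinch\circ\widetilde{x}\circ\pinch = 2\cdot(\widetilde{x}\circ\pinch) = 0
\]
because $2\pinch = 0$. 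This one-line trick disposes of both indeterminacy terms at once and keeps the lemma in the unlocalised category; your proposed route via ``computing $[M,M]_8^{\Sp}$ explicitly'' or via $K$-local relations would be substantially more work and, as written, does not close.
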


\begin{proof} We know from \cite[Lemma 12.5 and proof of Theorem 12.13]{Ada66} that the composite
\[ 
S^8 \xrightarrow{\incl}  \Sigma^8 M \xrightarrow{v_1^4}  M \xrightarrow{\pinch}  S^1  
\]
is equal to $8 \sigma$. Using this and the Toda bracket 
\[\mu \in  \left<2,8\sigma,\eta\right>  \quad(\mbox{indeterminacy: } \eta^2\sigma, \eta \varepsilon)\]
we get a commutative diagram
\[\xymatrix{S^8 \ar[r]^2 & S^8 \ar[rr]^{8\sigma} \ar[dd]_{\incl} & & S^1 \ar[r]^\eta & S^0\\ & & M \ar[ur]^{\pinch} & &  \\ & \Sigma^8 M \ar[ur]^{v_1^4} \ar[d]_{\pinch} & & &\\ & S^9, \ar[uuurrr]^t & & &  }    \]
where $t= \mu, \mu +\eta\varepsilon$, or $\mu + \eta^2\sigma$. Since $t \circ \pinch= \eta \circ \pinch \circ v_1^4$, it folows that
\[\incl \circ t \circ \pinch= \incl \circ \eta \circ \pinch\circ v_1^4=2v_1^4. \]

Here we used that the mod-$2$ Moore spectrum satisfies $$2 Id_M = \incl\circ\eta\circ\pinch.$$

It remains to show that $\incl \circ \eta\varepsilon \circ \pinch=0$ and $\incl \circ \eta^2\sigma \circ \pinch=0$. We will now show that given $x \in \pi_8S^0_{(2)}$, then $\incl \circ \eta x \circ \pinch=0$. Indeed, since $2x=0$, using the cofiber sequence
\[
\Sigma^{-1}M \xrightarrow{\pinch}  S^0 \xrightarrow{2}  S^0 \xrightarrow{\incl} M
 \]
there exists $\widetilde{x}: S^8 \rightarrow \Sigma^{-1}M$ such that $\pinch \circ \widetilde{x} =x$. Now one can compute
\[\incl \circ \eta x \circ \pinch = \incl \circ \eta \circ \pinch \circ \widetilde{x} \circ \pinch= 2 \circ \widetilde{x} \circ \pinch\]
because $\incl \circ \eta \circ \pinch =2$. 
But $2\pinch=0$ which finishes the proof. \end{proof}


Before starting the calculations we recall the isomorphism
\begin{align*}
[\SigmaG, \SigmaG]^{G,v_1}_* \cong \mathbb{Z}[G]\otimes [L_1 S^0, L_1 S^0]^{\Sp}_*.
\end{align*}

The right hand side is generated by elements of the form $\alpha\cdot g$, where $\alpha \in \pi_*(L_1S^0)$ and $g \in G$. On the left hand side, this corresponds to the map $$\SigmaG \wedge L_1S^0 \longrightarrow \SigmaG \wedge L_1S^0$$ that is $\alpha$ on the factor $L_1S^0$ and multiplication by $g$ on the factor $\SigmaG$. We also recall that there is an isomorphism

\begin{align*}
[\SigmaG, \SigmaG]^G_*\cong \mathbb{Z}[G]\otimes [ S^0, S^0]^{\Sp}_*, 
\end{align*}
and similar observations apply also here. 

\begin{lemma}\label{lem:maincalc}
Let $F: \Ho(\SpG) \longrightarrow \Ho(\LSpG)$ be an exact functor such that 
$$F(\SigmaG)=\SigmaG \wedge L_1S^0$$ 
and $F(g)=g$ for any $g \in G$. Then
\[
F: [\SigmaG , \SigmaG]^G_* \longrightarrow [\SigmaG \wedge L_1S^0, \SigmaG \wedge L_1S^0]^{G,v_1}_*
\]
satisfies
\begin{itemize}
\item $F(\eta)= \eta + \sum\limits_{g \in B} y_1\cdot g$ for some $B \subseteq G$ 
\item $F(\eta^2)=\eta^2$ \vphantom{$ \sum\limits_{g \in G \backslash \{1\}}$}
\item $F(\nu)=m\cdot \nu + \sum\limits_{g \in G \backslash \{1\}} n_g\nu \cdot g$, $m$ odd, $n_g$ even
\item $F(\sigma)=k\sigma + \sum\limits_{g \in G\backslash\{1\}} l_g\sigma\cdot g$, $k$ odd, $l_g$ even
\item $F(\mu)=\mu+\sum\limits_{g\in C} \eta^2\sigma\cdot g$ for some $C \subseteq G$
\end{itemize}
\end{lemma}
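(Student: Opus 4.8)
The plan is to analyze $F$ degree by degree, exploiting that $F$ is exact (so it commutes with the triangulated structure, cofiber sequences, and Toda brackets up to sign and indeterminacy) together with the three structural constraints: $F$ fixes the generator $\SigmaG$ (up to the identification with $\SigmaG \wedge L_1S^0$), $F$ fixes the group elements $g \in G$, and $F$ must land in the $v_1$-local category whose relevant homotopy groups are recorded in the table. Throughout I will use the decomposition $[\SigmaG, \SigmaG]^{G,v_1}_* \cong \mathbb{Z}[G] \otimes \pi_*(L_1S^0)$ and its non-local analogue; since $F(g) = g$, the functor $F$ is $\mathbb{Z}[G]$-linear in an appropriate sense, so it suffices to understand what $F$ does to the ``coefficient'' classes $\eta, \eta^2, \nu, \sigma, \mu$ viewed as endomorphisms of $\SigmaG$ (i.e.\ $1 \otimes \alpha$), after which the general formula follows by linearity. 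This reduces everything to bookkeeping in $\mathbb{Z}[G] \otimes \pi_*(L_1S^0)$.

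The first step is to pin down $F(\eta) \in [\SigmaG, \SigmaG]^{G,v_1}_1 \cong \mathbb{Z}[G] \otimes \pi_1(L_1S^0) = \mathbb{Z}[G]\otimes \mathbb{Z}/2\{\eta, y_1\}$. Writing $F(\eta) = \sum_g (a_g \eta + b_g y_1) g$, I would determine the coefficients by applying $F$ to $\Phi^G$ or to fixed points, or more simply by using naturality: composing with the inflation/fixed-point or projection comparisons forces the ``diagonal'' coefficient of $\eta$ to be $1$ (the underlying non-equivariant $F$ on $\pi_1 S^0 \to \pi_1 L_1 S^0$ must be the localization map, since $F$ respects the unit up to the given identification). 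That leaves an undetermined subset $B \subseteq G$ of group elements carrying a $y_1$ term, giving $F(\eta) = \eta + \sum_{g\in B} y_1\cdot g$. Then $F(\eta^2) = F(\eta)^2$; expanding in $\mathbb{Z}[G]\otimes\pi_2(L_1S^0)$ and using the relations $y_1^2 = 0$, $\eta y_1 = 0$ (and that $\eta y_0 = y_1$, $\eta y_1 = 0$ hence the cross terms $\eta y_1 g$ vanish), everything collapses to $\eta^2$, which is the sharpest possible statement since $\pi_2(L_1S^0) = \mathbb{Z}/2\{\eta^2\}$ has no $y$-classes.

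For $F(\nu)$ and $F(\sigma)$ the key points are: $\pi_3$ and $\pi_7$ of $L_1S^0$ agree with the $2$-local sphere ($\mathbb{Z}/8\{\nu\}$ and $\mathbb{Z}/16\{\sigma\}$), so $F(\nu) = \sum_g m_g \nu\, g$ and $F(\sigma) = \sum_g k_g \sigma\, g$. To control the diagonal coefficients mod $2$, I would use the relation $4\nu = \eta^3$: applying $F$ gives $4F(\nu) = F(\eta)^3$, and expanding $F(\eta)^3$ in $\mathbb{Z}[G]\otimes\pi_3(L_1S^0)$ using the $y$-relations yields $\eta^3 = 4\nu$ on the diagonal (the $y_1$-terms contribute nothing in degree $3$ since $y_1^2=0$, $\eta y_1 = 0$, so $y_1^3 = 0$ and mixed terms vanish), forcing $4 m_1 \nu = 4\nu$, i.e.\ $m_1$ odd, and $4 m_g \nu = 0$ for $g \neq 1$, i.e.\ $m_g$ even. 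For $\sigma$ I would use the Toda bracket $8\sigma = \langle \nu, 8, \nu\rangle$: exactness of $F$ implies $F(8\sigma) \in \langle F(\nu), 8, F(\nu)\rangle$, and a computation of this bracket in $\mathbb{Z}[G]\otimes\pi_7$ using the already-established form of $F(\nu)$ (with odd diagonal, even off-diagonal coefficients, and $8\nu = 0$ so the bracket makes sense) pins the diagonal coefficient of $\sigma$ to be odd and the off-diagonal ones even. Finally $F(\mu)$: since $\pi_9(L_1S^0) = \mathbb{Z}/2\{\eta^2\sigma, \mu\}$, write $F(\mu) = \sum_g(c_g\mu + d_g\eta^2\sigma)g$; I would use $\mu \in \langle 2, 8\sigma, \eta\rangle$ with $K$-local indeterminacy exactly $\eta^2\sigma$, apply $F$ (exact), and compute $\langle 2, F(8\sigma), F(\eta)\rangle$ using $F(8\sigma) = 8\sigma$ on the diagonal and $F(\eta) = \eta + \sum_{g\in B}y_1 g$; Lemma~\ref{v1relation}, which identifies $\incl\circ\mu\circ\pinch = 2v_1^4$, together with the relation $\mu y_0 = \eta^2\sigma$ controls how the $y_1$-terms in $F(\eta)$ feed into the bracket and produce the $\eta^2\sigma$-corrections, forcing $c_1$ odd (hence $=1$ mod $2$) and collecting the remaining $\eta^2\sigma\cdot g$ terms into $\sum_{g\in C}\eta^2\sigma\cdot g$ for some $C\subseteq G$.

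The main obstacle I anticipate is the $F(\mu)$ case: controlling a Toda bracket in the equivariant $v_1$-local setting requires care about indeterminacy (which is nontrivial and group-indexed), about the sign/naturality conventions for brackets under exact functors, and about how the off-diagonal $y_1$-terms of $F(\eta)$ interact with $8\sigma$ through the relation $\mu y_0 = \eta^2\sigma$ and the identity of Lemma~\ref{v1relation}. A secondary subtlety is justifying the reduction to coefficient classes: one must check that $F$ really is compatible with the $\mathbb{Z}[G]$-module structure coming from $F(g) = g$ and from $F$ being additive and exact, so that knowing $F$ on $1\otimes\alpha$ determines it on all of $\mathbb{Z}[G]\otimes\pi_*$. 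Both the non-equivariant prototype in \cite[Section 5]{Roi07} and the equivariant bookkeeping in \cite{P16} should make this routine, but the combination is where the real work lies.
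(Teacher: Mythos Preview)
Your outline for $F(\eta^2)$, $F(\nu)$, $F(\sigma)$ and $F(\mu)$ follows the paper's proof closely: the relation $4\nu=\eta^3$, the bracket $8\sigma=\langle\nu,8,\nu\rangle$, and the bracket $\mu\in\langle 2,8\sigma,\eta\rangle$ (with the auxiliary computation $\langle 2,8\sigma,y_1\rangle=\eta^2\sigma$) are exactly the tools used there.

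The gap is in your treatment of $F(\eta)$. You propose to pin down the coefficient of $\eta\cdot 1$ by ``applying $F$ to $\Phi^G$ or to fixed points'' or by invoking an ``underlying non-equivariant $F$'' which ``must be the localisation map''. None of this is available from the hypotheses: the lemma only assumes that $F$ is exact, sends $\SigmaG$ to $\SigmaG\wedge L_1S^0$, and fixes each $g\in G$. There is no assumed compatibility with $\Phi^H$, with $(-)^H$, with inflation, or with any non-equivariant restriction, and in the application $F=\Psi^{-1}\circ(-\wedge X)$ no such compatibility is known a priori. So as written you have not ruled out, say, $F(\eta)=\eta+\eta\cdot g_0 + (\text{$y_1$-terms})$ for some $g_0\neq 1$.

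The paper's argument for this step is entirely intrinsic to the triangulated structure: exactness forces $F(\SigmaG\wedge M)\cong\SigmaG\wedge L_1M$ with $F(\incl)=\incl$ and $F(\pinch)=\pinch$ under this identification. The Moore spectrum relation $2\,Id_M=\incl\circ\eta\circ\pinch$ (which also holds $K$-locally) then gives
\[
2 \;=\; \incl\circ F(\eta)\circ\pinch \;=\; \sum_{g\in A}(\incl\circ\eta\circ\pinch)\cdot g \;+\; \sum_{g\in B}(\incl\circ y_1\circ\pinch)\cdot g \;=\; \sum_{g\in A}2\cdot g,
\]
using $\incl\circ y_1\circ\pinch=0$. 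Comparing coefficients forces $A=\{1\}$. This is the missing idea in your proposal; once you have it, the rest of your outline goes through essentially as in the paper.
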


\begin{proof} When we write e.g. $F(\eta)$ in the following calculation, we mean $F(\SigmaG \wedge \eta)$. Having $F(\SigmaG)= \SigmaG \wedge L_1S^0$ in mind, we consider $F$ to be a ring homomorphism from $\mathbb{Z}[G]\otimes \pi_*S^0$ to $\mathbb{Z}[G]\otimes \pi_*(L_1S^0)$.

\bigskip

\fbox{$F(\eta)= \eta + \sum\limits_{g \in B} y_1\cdot g$ for some $B \subseteq G$} 

Because $F$ is triangulated, we have an isomophism $F(\SigmaG \wedge M) \cong \SigmaG \wedge L_1M$ in $\Ho(\LSpG)$ such that the following diagram commutes: 
$$\xymatrix{F(\SigmaG) \ar[r]^2 \ar@{=}[d] & F(\SigmaG) \ar[rr]^-{F(1 \wedge \incl)} \ar@{=}[d] & & F(\SigmaG \wedge M) \ar[d]^-\cong \ar[rr]^-{F(1 \wedge \pinch)} & & F(\Sigma \SigmaG) \ar[d]^-{\cong}  \\ \SigmaG \wedge L_1S^0  \ar[r]^2 & \SigmaG \wedge L_1S^0 \ar[rr]^-{1 \wedge \incl} & & \SigmaG \wedge L_1M  \ar[rr]^-{1 \wedge \pinch} & & \Sigma \SigmaG \wedge L_1S^0.}$$

Having this diagram in mind, we will once and for all identify $F(\SigmaG \wedge M)$ with \linebreak $\SigmaG \wedge L_1M$, and $F(\Sigma \SigmaG)$ with $\Sigma \SigmaG \wedge L_1S^0$. Under these identifications $F(1 \wedge \incl)$ and $F(1 \wedge \pinch)$ correspond to $1 \wedge \incl$ and $1 \wedge \pinch$, respectively. Below we will also denote for short $1 \wedge \incl$ and $1 \wedge \pinch$ just by $\incl$ and $\pinch$.

The mod-$2$ Moore spectrum has the peculiar property that $$2 Id_M = \incl\circ\eta\circ\pinch \neq 0.$$ As $\eta$ survives $K$-localisation the same identity also holds $K$-locally, and we also have $2 Id_{L_1M}\neq 0$ (see \cite[Section 5.2]{Roi07}).  

We know that 
\begin{multline}
F(\eta)=\sum\limits_{g\in A} \eta\cdot g + \sum\limits_{g \in B} y_1\cdot g \nonumber\\ \in [\SigmaG, \SigmaG]^{G,v_1}_1 \cong \bigoplus\limits_{|G|}\pi_1(L_1S^0) = \mathbb{Z}/2\{ \eta \cdot g, y_1 \cdot g'\,\,|\,\,g, g' \in G\}
\end{multline}
for some subsets $A$ and $B$ of $G$. Because  $1 \wedge 2 Id_{L_1M}=F(1 \wedge 2Id_{M})\neq 0$, we have
\begin{eqnarray}
2&=&(1 \wedge \incl)\circ F(\eta) \circ (1 \wedge \pinch) \nonumber\\ &=& \sum\limits_{g\in A} (\incl\circ\eta\circ\pinch)\cdot g + \sum\limits_{g \in B}(\incl\circ y_1\circ\pinch)\cdot g = \sum\limits_{g\in A}2\cdot g \nonumber
\end{eqnarray}
because $\incl\circ y_1\circ\pinch=0$ \cite[Section 5.2]{Roi07}. Comparing coefficients this must mean that $A=\{1\}$, so
\[
F(\eta)= \eta + \sum\limits_{g \in B} y_1\cdot g.
\]

\bigskip
\fbox{$F(\eta^2)=\eta^2$} 

From the previous point we can conclude immediately that
\[
F(\eta^2)=F(\eta)^2=\eta^2 + \sum\limits_{g \in B}2y_1 \eta g + \sum\limits_{g,g' \in B}y_1^2 g g' = \eta^2
\]
because $y_1\eta=y_1^2=0 \in \pi_2(L_1S^0)$.

\bigskip
\fbox{$F(\nu)=m\cdot \nu + \sum\limits_{g \in G \backslash \{1\}} n_g\nu \cdot g$, $m$ odd, $n_g$ even}

Because $\eta^3=4\nu$ inside $\pi_3S^0$ and $\pi_3(L_1S^0)$, we can use our calculation for $F(\eta)$ together with the relation $\eta^2 y_1=0$ to see that
\[
4F(\nu)=F(\eta^3)=\eta^3=4\nu.
\]
Furthermore,
\[
F(\nu)=m\cdot \nu + \sum\limits_{g \in G \backslash \{1\}} n_g\nu \cdot g
\]
for some integers $m$ and $n_g$. Hence
\[
4\nu=4m\nu + \sum\limits_{g \in G \backslash \{1\}} (4n_g)\nu \cdot g \in \mathbb{Z}/8\{\nu\cdot g\,\,|\,\, g \in G\}.
\]
From here we can conclude that $m$ must be odd and $n_g$ even. 

\bigskip
\fbox{$F(\sigma)=k\sigma + \sum\limits_{g \in G\backslash\{1\}} l_g\sigma\cdot g$, $k$ odd, $l_g$ even} 

We know that
\[
8\sigma=\left< \nu, 8, \nu \right>
\]
with trivial indeterminacy. Thus,
\begin{eqnarray}
8F(\sigma) &= & \left< F(\nu), 8, F(\nu) \right> \nonumber \\
& = & \left< m\cdot \nu + \sum\limits_{g \in G \backslash \{1\}} n_g\nu \cdot g, \,8\, , m\cdot \nu + \sum\limits_{g \in G \backslash \{1\}} n_g\nu \cdot g \right> \nonumber \\
& = & \left< m\nu, 8, m\nu \right> + \sum\limits_{g,g'}\left< n_g\nu, 8, n_{g'}\nu \right>\cdot gg' \nonumber \\
&  & + \sum\limits_g\left< m\nu, 8, n_g\nu \right> \cdot g+ \sum\limits_g\left< n_g\nu, 8, m\nu \right>\cdot g \nonumber \\
& = & m^2\left< \nu, 8, \nu \right> \nonumber
\end{eqnarray}
because all other summands are trivial as they are multiples of 16. Thus,
\[
8F(\sigma)=8m^2\sigma, \,\,\,\,m \,\,\mbox{odd}.
\]
We also know that 
\[
F(\sigma)=k\sigma + \sum\limits_{g \in G\backslash\{1\}} l_g\sigma\cdot g
\]
for some integers $k$ and $l_g$. A similar argument to the previous point shows that $k$ is odd and the $l_g$ are even.

\bigskip
\fbox{$F(\mu)=\mu+\sum\limits_{g\in C} \eta^2\sigma\cdot g$ for some $C \subseteq G$} 

We use the Toda bracket $\mu \in \left<2, 8\sigma, \eta\right>$. Thus,
\[
F(\mu) \in \left<2, F(8\sigma), F(\eta)\right>.
\]
Inserting what we have calculated for $F(8\sigma)$ and $F(\eta)$, we see that
\[
 \left<2, F(8\sigma), F(\eta)\right>= \left< 2, 8\sigma, \eta + \sum\limits_{g \in B} y_1\cdot g \right>.
\]
This bracket is contained in
\[
\left<2, 8\sigma, \eta \right> + \sum\limits_{g \in B} \left<2, 8\sigma, y_1 \right> \cdot g.
\]
We know that 
\[
\left<2, 8\sigma, \eta \right>= \{ \mu, \mu+\eta^2\sigma \}
\]
and
\[
\left< 2, 8\sigma, y_1 \right>  = \eta^2 \sigma \,\,\,\mbox{with zero indeterminacy}
\]
in $\pi_*(L_1S^0)$. Putting these things together we can conclude that
\[
F(\mu)=\mu+\sum_{g \in C} \eta^2\sigma\cdot g \,\,\,\mbox{for some $C \subseteq G$}. 
\]
\end{proof}


\begin{corollary}\label{cor:v1}
Let $F: \Ho(\SpG) \longrightarrow \Ho(\LSpG)$ be an exact functor such that 
$$F(\SigmaG)=\SigmaG \wedge L_1S^0$$ 
and $F(g)=g$ for any $g \in G$. Then $F(\SigmaG \wedge v_1^4)$ is an isomorphism in $\Ho(\LSpG)$. 
\end{corollary}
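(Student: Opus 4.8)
The plan is to use Lemma \ref{v1relation} together with the computations of Lemma \ref{lem:maincalc} to pin down $F(\SigmaG \wedge v_1^4)$ well enough to show it is invertible. First I would recall the standard fact that a self-map $f\colon \SigmaG \wedge \Sigma^8 M \to \SigmaG \wedge M$ in $\Ho(\LSpG)$ is an isomorphism if and only if it induces an isomorphism after smashing with the mod-$2$ Moore spectrum, or equivalently on $v_1$-periodic homotopy; in practice, since both source and target are $v_1$-local and $v_1^4$ is a $v_1$-self map, it suffices to check that $F(\SigmaG \wedge v_1^4)$ reduces to (a unit multiple of) the standard $v_1$-self map on each summand $L_1 S^0 \cdot g$ after an appropriate filtration argument, together with the fact that the ``error terms'' live in higher filtration and are therefore nilpotent. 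So the real content is to compute $F(\SigmaG \wedge v_1^4)$ modulo such error terms.

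The key step is the following. By Lemma \ref{v1relation}, the self-map $v_1^4$ of $M$ satisfies $\incl \circ \mu \circ \pinch = 2 v_1^4$, i.e. $v_1^4$ is, up to the factor $2$ (which is invertible relative to the other piece of the $v_1^4$-structure), detected by the element $\mu \in \pi_9(L_1 S^0)$ via the inclusion/pinch maps of the Moore spectrum. Since $F$ is exact it carries this identity over: writing $\widetilde{v}$ for $F(\SigmaG \wedge v_1^4)$ under the identifications $F(\SigmaG \wedge \Sigma^8 M) \cong \SigmaG \wedge L_1 \Sigma^8 M$ and $F(\SigmaG \wedge M) \cong \SigmaG \wedge L_1 M$ set up in the proof of Lemma \ref{lem:maincalc}, we get
\[
\incl \circ F(\mu) \circ \pinch = 2\, \widetilde{v}
\]
in $[\SigmaG \wedge L_1\Sigma^8 M, \SigmaG \wedge L_1 M]^{G,v_1}_0$. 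Now Lemma \ref{lem:maincalc} tells us $F(\mu) = \mu + \sum_{g \in C} \eta^2\sigma \cdot g$ for some $C \subseteq G$. The terms $\eta^2\sigma \cdot g$ die after composing with $\incl$ and $\pinch$: indeed $\incl \circ \eta^2\sigma \circ \pinch = 0$ was proved inside Lemma \ref{v1relation} (the argument that $\incl \circ \eta x \circ \pinch = 0$ for $x \in \pi_8 S^0_{(2)}$, applied with $x = \eta\sigma$). Hence $\incl \circ F(\mu) \circ \pinch = \incl \circ \mu \circ \pinch = 2 v_1^4$ on each group-algebra coordinate, so $2\, \widetilde{v} = 2 v_1^4$ (the ``standard'' $v_1$-self map, diagonal on $\Z[G]$).

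From $2\widetilde{v} = 2 v_1^4$ one concludes $\widetilde{v} = v_1^4 + t$ where $2t = 0$. The point is then that such a $2$-torsion correction term $t$ cannot destroy invertibility: $t$ is $\eta^4$-divisible in the relevant sense, or more directly, since $t$ is $2$-torsion it factors through the Moore spectrum smashed with $\SigmaG$, and the resulting contribution to $\widetilde{v}$ smashed with $M$ lands in a nilpotent ideal (the kernel of the map to the top-cell/bottom-cell associated graded of $M \wedge M$), whence $\widetilde{v} \wedge M$ is a unit plus nilpotent, hence a unit. Since $\SigmaG \wedge v_1^4$ is a $v_1$-self map and $F$ is exact, $\widetilde v \wedge M$ being an equivalence implies $\widetilde v$ itself is an equivalence (both domain and codomain are $v_1$-local, so detecting equivalences on the $M$-homology suffices). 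Alternatively, and perhaps more cleanly, I would argue that $\widetilde v$ induces an isomorphism on $K$-homology: $K_*(v_1^4)$ is an isomorphism by construction, $K_*(t) = 0$ since $t$ is $2$-torsion and $K_*$ of the Moore spectrum kills it after the relevant identifications, so $K_*(\widetilde v)$ is an isomorphism; and a map between $v_1$-local (equivalently $K$-local) objects that is a $K_*$-isomorphism is an equivalence.

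The main obstacle I expect is the last step — justifying rigorously that the $2$-torsion correction $t$ does not obstruct invertibility. One has to be careful that $\widetilde v$ is a self-map of $\SigmaG \wedge L_1 M$-type objects (not of a sphere), so ``$2t=0 \Rightarrow t$ nilpotent'' is not automatic and must be phrased via the $M$-based (or $K$-based) detection of equivalences in the $v_1$-local category, using that $\SigmaG \wedge M$ generates and that $[\SigmaG \wedge M, \SigmaG \wedge M]^{G,v_1}_0$ has a nilpotent ideal complementing the $\Z/2[G]$-worth of units. Once the detection principle is in place the argument is short; isolating and proving that principle is where the care is needed.
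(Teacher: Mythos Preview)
Your approach matches the paper's exactly up through the equation $2\widetilde v = 2v_1^4$: both use Lemma~\ref{v1relation} to reduce to $F(\mu)$, invoke Lemma~\ref{lem:maincalc} for $F(\mu)=\mu+\sum_{g\in C}\eta^2\sigma\cdot g$, and kill the $\eta^2\sigma$ terms via $\incl\circ\eta^2\sigma\circ\pinch=0$. The divergence, and the genuine gap, is in the last step.

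Your claim ``$K_*(t)=0$ since $t$ is $2$-torsion'' is not valid as stated. The Hurewicz-type map $[M,M]^{L_1\Sp}_8 \to \End_{K_*}(K_*M)_8\cong\mathbb{Z}/2$ lands in a group where $2=0$, so the fact that $2t=0$ in the source gives no information about the image of $t$; a priori a $2$-torsion element could hit the generator. (Your alternative nilpotency argument has the same problem: factoring $t$ through a Moore-type object does not by itself place it in a nilpotent ideal of $[\SigmaG\wedge L_1M,\SigmaG\wedge L_1M]_*^{G,v_1}$.) The paper closes this gap by appealing to the explicit computation of $[M,M]^{L_1\Sp}_8$ from \cite[Section~5]{Roi07}: the element $v_1^4$ has order~$4$, and the remaining $2$-torsion is spanned by $2v_1^4$, $\tilde{\eta\sigma}\circ\pinch$, and $id\wedge\eta\sigma$. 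One then writes
\[
F(v_1^4)=a\,v_1^4+\sum_{g\in D}2v_1^4\cdot g+\sum_{g\in E}(\tilde{\eta\sigma}\circ\pinch)\cdot g+\sum_{g\in F}(id\wedge\eta\sigma)\cdot g
\]
with $a$ odd, and checks \emph{each named generator} is zero on $K_*$: the first because $2=0$ in $K_*M$, the second because $K_*(\pinch)=0$, the third because $\eta\sigma$ is torsion in $\pi_*S^0$ and $K_*$ is torsion-free (the paper phrases the last two as ``for degree reasons'', citing \cite[Section~3.2]{Roi07}). With that in hand your $K_*$-detection argument finishes the proof. So the missing ingredient is precisely the explicit description of the $2$-torsion in $[M,M]^{L_1\Sp}_8$; once you import that, your proposal and the paper's proof coincide.
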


\begin{proof}
Denote $\SigmaG \wedge v_1^4$ by $v_1^4$ for short. We know that $2\vv = \incl\circ\mu\circ\pinch$ in $\Ho(\Sp)$ (see Lemma \ref{v1relation}) and hence also in $\Ho(L_1 \Sp)$. So 
\begin{eqnarray}
2F(\vv) & = & (1 \wedge \incl)\circ F(\mu) \circ (1 \wedge \pinch) \nonumber\\
& = & \incl \circ (\mu + \sum\limits_{g \in C} \eta^2\sigma \cdot g) \circ \pinch \nonumber\\
& = & \incl \circ \mu \circ \pinch + \sum\limits_{g \in C} (\incl\circ\eta^2\sigma \circ \pinch)\cdot g \nonumber\\
& = & 2\vv \nonumber
\end{eqnarray}
because $\incl\circ\eta^2\sigma\circ\pinch=0$ as we proved in Lemma \ref{v1relation}. 

We know from \cite[Corollary 5.6]{Roi07} that $v_1^4 \in [M,M]^{L_1\Sp}_8$ has order $4$. This implies that 
\[
F(\vv)=av_1^4 + \sum\limits_{g \in D} 2 v_1^4 g + \sum\limits_{g \in E} (\tilde{\eta\sigma}\circ\pinch)\cdot g + \sum\limits_{g \in F} (id\wedge \eta\sigma) \cdot g
\]
where $a$ is odd and $D, E, F \subset G$.
The elements $\tilde{\eta\sigma} \circ \pinch$ and $id \wedge \eta\sigma$ in $[M,M]^{L_1\Sp}_8$ are described in more detail in \cite[Section 5]{Roi07} but we only need to know here that they have order 2. These order $2$ elements induce zero on ${K_{(2)}}_*(-)$ for degree reasons \cite[Section 3.2]{Roi07}, leaving us with the result that $F(\vv)$ is a $K_*$-equivalence and thus a $v_1$-local isomorphism. \end{proof}

Now we turn to the case of an exact endofunctor. 

\begin{proposition}\label{cor:maincalc}
Let $\mathbb{F}: \Ho(\LSpG) \longrightarrow \Ho(\LSpG)$ be an exact endofunctor such that $\mathbb{F}(L_{v_1}\SigmaG)=L_{v_1}\SigmaG$
and $\mathbb{F}(g)=g$ for any $g \in G$. Then 
\[\mathbb{F}: [L_{v_1}\SigmaG, L_{v_1}\SigmaG]^{G,v_1}_n \rightarrow [L_{v_1}\SigmaG, L_{v_1}\SigmaG]^{G,v_1}_n\] 
is an isomorphism for $n=0,...,8$.
\end{proposition}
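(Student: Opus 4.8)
The plan is to go degree-by-degree through $n=0,\dots,8$, using the generators and relations recorded above together with the structural results already established. The key observation is that $[L_{v_1}\SigmaG, L_{v_1}\SigmaG]^{G,v_1}_* \cong \mathbb{Z}[G]\otimes [L_1 S^0, L_1 S^0]^{\Sp}_*$, and that $\mathbb{F}$ is a ring homomorphism on this which fixes $\mathbb{Z}[G]$ pointwise (by $\mathbb{F}(g)=g$) and acts $\mathbb{Z}[G]$-linearly. So it suffices to understand $\mathbb{F}$ on the non-equivariant part $\pi_*(L_1 S^0)$ in degrees $0,\dots,8$, where the generators are $\iota, y_0$ (degree $0$), $\eta, y_1$ (degree $1$), $\eta^2$ (degree $2$), $\nu$ (degree $3$), $\sigma$ (degree $7$), and $\eta\sigma$ (degree $8$), with degrees $4,5,6$ trivial.

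**Key steps.** First, in degree $0$, $\mathbb{F}(\iota)=\iota$ since $\mathbb{F}$ is unital; I must compute $\mathbb{F}(y_0)$. Here $y_0$ is the image of a generator under the inclusion of the top cell or can be detected via $y_1=\eta y_0$ and the mod-$2$ structure; one argues using $y_0^2 = 0$ and $2y_0=0$ together with the triangulated structure applied to the Moore spectrum cofiber sequence to pin down $\mathbb{F}(y_0) = y_0 + (\text{lower torsion terms in }\mathbb{Z}[G])$, and then show those extra terms vanish or are absorbed, giving an isomorphism on degree $0$. Second, degrees $1,2,3$ are handled exactly as in Lemma \ref{lem:maincalc}, except that now the source also has the classes $y_0, y_1$: one shows $\mathbb{F}(\eta) = \eta + \sum_{g\in B} y_1 g$ and $\mathbb{F}(y_1) = y_1$ (using $\mathbb{F}(y_1)=\mathbb{F}(\eta)\mathbb{F}(y_0)$ and the relations $\eta y_1 = y_1^2 = 0$), so the induced map on degree $1$ is the matrix $\begin{psmallmatrix}1 & 0\\ \ast & 1\end{psmallmatrix}$ in the $(\eta,y_1)$ basis over $\mathbb{Z}[G]$, hence invertible; degree $2$ gives $\mathbb{F}(\eta^2)=\eta^2$ and degree $3$ gives $\mathbb{F}(\nu) = m\nu + \sum n_g \nu g$ with $m$ odd, $n_g$ even, so multiplication by this is a unit in $\mathbb{Z}/8[G]$. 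Third, degrees $4,5$ are vacuous. Fourth, degree $6$ is trivial $K$-locally (the $\nu^2$ class dies), so nothing to check. Fifth, degree $7$: $\mathbb{F}(\sigma) = k\sigma + \sum l_g \sigma g$ with $k$ odd, $l_g$ even, exactly as in Lemma \ref{lem:maincalc}, invertible in $\mathbb{Z}/16[G]$. Finally, degree $8$: $\pi_8(L_1 S^0) = \mathbb{Z}/2\{\eta\sigma\}$, and $\eta\sigma = \eta \cdot \sigma$, so $\mathbb{F}(\eta\sigma) = \mathbb{F}(\eta)\mathbb{F}(\sigma) = (\eta + \sum_{g\in B} y_1 g)(k\sigma + \sum l_g \sigma g)$; using $l_g$ even (so $l_g \sigma = 0$ since $\sigma$ has order $16$... wait, $l_g$ even only kills $\sigma$ mod $2$-part — rather one uses $\sigma y_1 = 0$ and $k$ odd, $\eta\sigma$ of order $2$) one gets $\mathbb{F}(\eta\sigma) = k\,\eta\sigma = \eta\sigma$ mod $2$, hence the identity. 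So $\mathbb{F}$ is an isomorphism in each degree $0,\dots,8$.

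**Main obstacle.** The delicate point is degree $0$, i.e. controlling $\mathbb{F}(y_0)$: unlike $\iota$, the class $y_0$ is not forced by unitality, and it does not obviously come from a restriction/conjugation/transfer one can exploit. One must use that $y_0$ is characterized homotopically (e.g. via its relation to $\eta$, $\mu$, and the $v_1$-self-map — recall $\mu y_0 = \eta^2\sigma$ from \eqref{relations}) and feed the already-computed values $\mathbb{F}(\eta), \mathbb{F}(\mu)$ (the latter via the analogue of Lemma \ref{lem:maincalc}, noting $\pi_9$ enters here) into the relation $\mu y_0 = \eta^2\sigma$ to deduce that $\mathbb{F}(y_0) = y_0 + (\text{terms killed by }\mu)$, and then argue these terms are themselves zero or don't affect invertibility. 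A secondary subtlety is bookkeeping the $\mathbb{Z}[G]$-module structure carefully in degrees $1$ and $8$ where two generators are present, ensuring the relevant matrices over group rings are genuinely invertible and not merely "upper triangular up to torsion"; this uses that an odd integer is a unit in $\mathbb{Z}/2^k[G]$ and that the off-diagonal entries land in square-zero ideals ($y_1^2 = 0$, $\eta y_1 = 0$).
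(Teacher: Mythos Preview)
Your proposal is correct and follows essentially the same route as the paper: reuse the computations of Lemma~\ref{lem:maincalc} for $\eta,\eta^2,\nu,\sigma,\mu$, then handle $y_0$ via the relation $\mu y_0=\eta^2\sigma$, deduce $y_1=\eta y_0$, and read off degrees $0$--$8$. The only difference is that the paper carries out the $y_0$ step more decisively---writing $\mathbb{F}(y_0)=\sum_{g\in D} y_0\cdot g$ and using $\mathbb{F}(\mu)\mathbb{F}(y_0)=\mathbb{F}(\eta^2\sigma)=\eta^2\sigma$ to force $D=\{1\}$, so that $\mathbb{F}(y_0)=y_0$ and $\mathbb{F}(y_1)=y_1$ exactly rather than merely up to an invertible matrix; your initial suggestion of using only $y_0^2=0$ and $2y_0=0$ would not suffice, but you correctly identify the $\mu y_0$ relation as the real input in your ``Main obstacle'' paragraph.
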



\begin{proof} This is a calculation very similar to the one in the proof of Lemma \ref{lem:maincalc}. Using exactly the same arguments as there we see that the following hold:

\begin{itemize}
\item $\mathbb{F}(\eta)= \eta + \sum\limits_{g \in B} y_1\cdot g$ for some $B \subseteq G$ 
\item $\mathbb{F}(\eta^2)=\eta^2$ \vphantom{$ \sum\limits_{g \in G \backslash \{1\}}$}
\item $\mathbb{F}(\nu)=m\cdot \nu + \sum\limits_{g \in G \backslash \{1\}} n_g\nu \cdot g$, $m$ odd, $n_g$ even
\item $\mathbb{F}(\sigma)=k\sigma + \sum\limits_{g \in G\backslash\{1\}} l_g\sigma\cdot g$, $k$ odd, $l_g$ even
\item $\mathbb{F}(\mu)=\mu+\sum\limits_{g\in C} \eta^2\sigma\cdot g$ for some $C \subseteq G$
\end{itemize}

We first show that $\mathbb{F}$ induces isomorphisms in degrees $n=2,...,8$. After this we will take care of $y_0$ and $y_1$. 

Since $\mathbb{F}(\eta^2)=\eta^2$, the case $n=2$ immediately follows. Next, the formula for $\mathbb{F}(\nu)$ implies that 
\[
\mathbb{F}: [L_{v_1}\SigmaG, L_{v_1}\SigmaG]^{G,v_1}_3 \cong \bigoplus\limits_{|G|} \mathbb{Z}/8 \longrightarrow \bigoplus\limits_{|G|}\mathbb{Z}/8 \cong [L_{v_1}\SigmaG, L_{v_1}\SigmaG]^{G,v_1}_3
\]
is represented by a matrix with odd determinant, i.e. an invertible matrix. This gives that 
\[
\mathbb{F}: [L_{v_1}\SigmaG, L_{v_1}\SigmaG]^{G,v_1}_3 \longrightarrow [L_{v_1}\SigmaG, L_{v_1}\SigmaG]^{G,v_1}_3
\]
is an isomorphism.

Similarly the formula for $\mathbb{F}(\sigma)$ tells us that
\[
\mathbb{F}: [L_{v_1}\SigmaG, L_{v_1}\SigmaG]^{G,v_1}_7 \longrightarrow [L_{v_1}\SigmaG, L_{v_1}\SigmaG]^{G,v_1}_7
\]
is an isomorphism.

Furthermore, since $2\eta=0$ and $\sigma y_1=0$, it follows from the formulas for $\mathbb{F}(\eta)$ and $\mathbb{F}(\sigma)$ that $\mathbb{F}(\eta \sigma)= \eta \sigma$. Hence we conclude that $\mathbb{F}$ gives an isomophism in degree 8. 


Now we take care of dimensions $n=0$ and $n=1$. 

\bigskip
\fbox{$\mathbb{F}(y_0)=y_0$}

As $y_0$ is a torsion element, $\mathbb{F}(y_0)$ must also be torsion, so
\[
\mathbb{F}(y_0)=\sum\limits_{g \in D} y_0\cdot g.
\]
We would like to know what $D$ is. We make use of the relation $\mu y_0=\eta^2\sigma$, so $$\mathbb{F}(\mu)\mathbb{F}(y_0)=\mathbb{F}(\eta^2)\mathbb{F}(\sigma)=\eta^2\sigma.$$ Substituting the results of our previous calculations, the left hand side is
\begin{eqnarray}
\mathbb{F}(\mu)\mathbb{F}(y_0) &=& (\mu+\sum_{g \in C} \eta^2\sigma\cdot g)(\sum\limits_{g \in D} y_0\cdot g) \nonumber \\
& = & \sum\limits_{g \in D} \mu y_0 \cdot g + \sum\limits_{g \in C, g' \in D} (\eta^2\sigma y_0)\cdot gg'. \nonumber
\end{eqnarray}
Because $\eta^2\sigma y_0 = \mu y_0^2=0$, the last summand is zero. Hence we arrive at 
\[
\sum\limits_{g \in D} \mu y_0 g=\eta^2\sigma=\mu y_0,
\]
so $D=\{1\}$ and $\mathbb{F}(y_0)=y_0$. 

\bigskip
\fbox{$\mathbb{F}(y_1)=y_1$}

The last calculation is now simple as $y_1=\eta y_0$. Thus,
\[
\mathbb{F}(y_1)=\mathbb{F}(\eta)\mathbb{F}(y_0)= (\eta + \sum\limits_{g \in B}y_1 \cdot g)y_0=\eta y_0 + \sum\limits_{g \in B} y_1y_0 \cdot g = y_1
\]
as $y_1 y_0=0$ and $\eta y_0=y_1$.

\bigskip
We finally note that because $\mathbb{F}(g)=g$, the element $y_1\cdot g$ is in the image of $\mathbb{F}$ for any $g$. So $\eta$ is also in the image of $\mathbb{F}$, which makes $\mathbb{F}$ surjective and consequently bijective in degree 1. \end{proof}

\begin{corollary} \label{moorefree} Let $\mathbb{F}: \Ho(\LSpG) \longrightarrow \Ho(\LSpG)$ be an exact endofunctor such that $\mathbb{F}(L_{v_1}\SigmaG)=L_{v_1}\SigmaG$
and $\mathbb{F}(g)=g$ for any $g \in G$. Then the map
\[\mathbb{F}: [L_{v_1}\SigmaG \wedge M, L_{v_1}\SigmaG]^{G,v_1}_* \longrightarrow [\mathbb{F}(L_{v_1}\SigmaG \wedge M), \mathbb{F}(L_{v_1}\SigmaG)]^{G,v_1}_* \]
is an isomorphism. \end{corollary}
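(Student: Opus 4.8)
The plan is to deduce Corollary \ref{moorefree} from Proposition \ref{cor:maincalc} by a five lemma / long exact sequence argument applied to the defining cofiber sequence of the mod-$2$ Moore spectrum. Smashing the triangle $S^0 \xrightarrow{2} S^0 \xrightarrow{\incl} M \xrightarrow{\pinch} S^1$ with $L_{v_1}\SigmaG$ and applying $[-, L_{v_1}\SigmaG]^{G,v_1}_*$ gives, for each $n$, a long exact sequence
\[
\cdots \to [L_{v_1}\SigmaG, L_{v_1}\SigmaG]^{G,v_1}_{n-1} \xrightarrow{\pinch^*} [L_{v_1}\SigmaG \wedge M, L_{v_1}\SigmaG]^{G,v_1}_n \xrightarrow{\incl^*} [L_{v_1}\SigmaG, L_{v_1}\SigmaG]^{G,v_1}_n \xrightarrow{2} [L_{v_1}\SigmaG, L_{v_1}\SigmaG]^{G,v_1}_n \to \cdots
\]
Since $\mathbb{F}$ is exact it carries this triangle to the corresponding triangle on $\mathbb{F}(L_{v_1}\SigmaG \wedge M) \cong L_{v_1}\SigmaG \wedge M$ (using that $\mathbb{F}(L_{v_1}\SigmaG) = L_{v_1}\SigmaG$, $\mathbb{F}(2) = 2$, and the identification of cofibers built exactly as in the proof of Lemma \ref{lem:maincalc}), and so induces a map of the two long exact sequences which is compatible with $\pinch^*$, $\incl^*$ and multiplication by $2$.

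Next I would run the five lemma degree by degree. By Proposition \ref{cor:maincalc}, $\mathbb{F}$ is an isomorphism on $[L_{v_1}\SigmaG, L_{v_1}\SigmaG]^{G,v_1}_n$ for $n = 0, \dots, 8$. The group $[L_{v_1}\SigmaG \wedge M, L_{v_1}\SigmaG]^{G,v_1}_n$ sits, by the long exact sequence above, in a short exact sequence with outer terms built from $[L_{v_1}\SigmaG, L_{v_1}\SigmaG]^{G,v_1}_{n-1}$ and $[L_{v_1}\SigmaG, L_{v_1}\SigmaG]^{G,v_1}_n$; concretely the cokernel of multiplication by $2$ in degree $n-1$ injects (via $\pinch^*$) and the kernel of multiplication by $2$ in degree $n$ is the quotient (via $\incl^*$). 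Because $L_1 S^0$, hence $L_{v_1}S^0_G$, has homotopy which is $8$-periodic up to the $v_1^4$-self-map, and the relevant homotopy groups in the table repeat with period $8$ (degrees $8$ and $9$ recover degrees $0$ and $1$), the groups $[L_{v_1}\SigmaG \wedge M, L_{v_1}\SigmaG]^{G,v_1}_n$ in all degrees are computed from the eight groups $[L_{v_1}\SigmaG, L_{v_1}\SigmaG]^{G,v_1}_0, \dots, [L_{v_1}\SigmaG, L_{v_1}\SigmaG]^{G,v_1}_7$, on each of which $\mathbb{F}$ is an isomorphism. The five lemma then gives that $\mathbb{F}$ is an isomorphism on $[L_{v_1}\SigmaG \wedge M, L_{v_1}\SigmaG]^{G,v_1}_n$ for every $n$.

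The main obstacle will be the bookkeeping needed to pass from the eight degrees handled in Proposition \ref{cor:maincalc} to \emph{all} degrees. Proposition \ref{cor:maincalc} only asserts isomorphism in degrees $0$ through $8$, so I need to argue that $[M, L_1 S^0]^{L_1\Sp}_*$ is periodic enough (via the action of the $v_1^4$-self-map, which $\mathbb{F}$ respects by Corollary \ref{cor:v1}) that knowing $\mathbb{F}$ on these degrees suffices. The cleanest way is probably to observe that $L_{v_1}\SigmaG \wedge M$ is a compact object whose $v_1^4$-self-map $\mathbb{F}$ sends to an isomorphism, so $[L_{v_1}\SigmaG \wedge M, L_{v_1}\SigmaG]^{G,v_1}_*$ is a finitely generated free module of rank $|G|$ over $[M,M]^{L_1\Sp}_*$, which is generated over $[M,M]^{L_1\Sp}_0$ together with the periodicity operator in degrees within a window of length $8$; compatibility of $\mathbb{F}$ with this module structure and with the periodicity operator then reduces everything to the finitely many degrees already treated. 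Alternatively one avoids periodicity entirely by invoking that $L_{v_1}\SigmaG \wedge M$ and $L_{v_1}\SigmaG$ are compact and $\mathbb{F}$ restricted to the thick subcategory they generate is determined by its (now known) effect on graded homs between them; I would use whichever of these the surrounding sections have set up most directly.
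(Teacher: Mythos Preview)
Your approach is the paper's: apply the cofiber sequence for $M$, the Five Lemma, and Proposition \ref{cor:maincalc} to get the isomorphism in degrees $0,\dots,7$, then use $8$-periodicity for the remaining degrees. One clarification on the periodicity step: the homotopy groups $\pi_*(L_1 S^0)$ are \emph{not} $8$-periodic (compare degrees $0$ and $8$ in the table), so your first justification does not work. What is $8$-periodic is $[L_{v_1}\SigmaG \wedge M,-]^{G,v_1}_*$, and this is immediate from the fact that $L_{v_1}(\SigmaG \wedge v_1^4)$ is an isomorphism in $\Ho(\LSpG)$ by the very definition of the localisation; since $\mathbb{F}$ is a functor it automatically sends this isomorphism to an isomorphism, so there is no need to invoke Corollary \ref{cor:v1} (which concerns $F$, not $\mathbb{F}$) or any module-structure argument.
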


\begin{proof} Recall we have a cofiber sequence
\[\xymatrix{L_{v_1}\SigmaG \ar[r]^2 & L_{v_1}\SigmaG \ar[r] & L_{v_1}\SigmaG \wedge M \ar[r] & \Sigma L_{v_1}\SigmaG. }\]
Using this, the previous proposition and the Five Lemma, one can see that the map under consideration is an isomorphism in degrees $*=0,...,7$. The other degrees follow from the $8$-periodicity of $\vv$, i.e. the fact that the morphism 
\[L_{v_1}(1 \wedge v_1^4):  L_{v_1}\SigmaG \wedge \Sigma^8 M \longrightarrow L_{v_1}\SigmaG \wedge M\] 
is an isomorphism in $\Ho(\LSpG)$.   
\end{proof}

\section{The Quillen functor (revisited)}\label{sec:revisited}

This entire section is dedicated to the proof of Proposition \ref{prop:mainprop}, namely that for $X$ cofibrant and $X\cong \Psi(L_{v_1}S^0_G)$, the $G$-spectrum $\mathbf{R}\Hom(X,Y)$ is $v_1$-local for all $Y$. This will be an inductive argument via the subgroups $H$ of $G$. 




Let $F$ denote the composite
\[ F: \Ho(\SpG) \xrightarrow{- \wedge X} \Ho(\C) \xrightarrow{\Psi^{-1}} \Ho(\LSpG).  \]

We will first explain how showing that $$F(\SigmaGH \wedge \vv)$$ is an isomorphism for all subgroups $H$ of $G$ implies the claim. Recall that we started with an equivalence
\[
\Psi: \Ho(\LSpG) \longrightarrow \Ho(\mathcal{C})
\]
such that $\Psi(L_{v_1}\SigmaGH) \cong    \SigmaGH \wedge^{\mathbf{L}} \Psi(L_{v_1} S^0_G)$, and that these isomorphisms are natural with respect to transfers, restrictions and conjugations. Let $X$ denote a cofibrant replacement of $\Psi(L_{v_1} S^0_G)$ in $\C$. As described earlier, we have to show that the adjunction
\[
- \wedge X: \SpG \lradjunction \mathcal{C}: \Hom(X,-)
\]
factors through $\LSpG$. This will be the case (by Lemma \ref{lem:factor}) if we manage to show that for any fibrant $Y \in \C$ the $G$-spectrum
$\Hom(X,Y)$
is $v_1$-local. By definition, the latter is the case if and only if for any $H \leq G$ the map
\[
[\SigmaGH \wedge M, \Hom(X,Y)]^G_* \xrightarrow{(\SigmaGH \wedge \vv)^*} [\SigmaGH\wedge M, \Hom(X,Y)]^G_{*+8}
\]
is an isomorphism. Hence it suffices to show that the map
\[\SigmaGH \wedge \Sigma^8 M \wedge X \xrightarrow{\SigmaGH \wedge v_1^4 \wedge 1} \SigmaGH \wedge M \wedge X  \]
is an isomorphism in $\Ho(\C)$ for any $H \leq G$. But the latter holds if and only if the composite 
\[ F: \Ho(\SpG) \xrightarrow{- \wedge X} \Ho(\C) \xrightarrow{\Psi^{-1}} \Ho(\LSpG)  \]
sends $\SigmaGH \wedge \vv$ to an isomorphism. We will show this by induction on $H$.

For convenience we choose the inverse $\Psi^{-1}$ so that 
$$\Psi^{-1}(\SigmaGH \wedge X)= L_{v_1}\SigmaGH.$$ 
By our assumptions on $\Psi$, we see that $F$ satisfies the conditions of Lemma \ref{lem:maincalc}. 

We showed the case $H=\{1\}$ in Section \ref{sec:calculation}, in other words we have already proved that $F(\SigmaG \wedge \vv)$ is an isomorphism. This will serve as the basis for an induction on the size of the subgroup $H$. We will proceed to prove the claim inductively for nontrivial subgroups using isotropy separation. 

Assume that $H \leq G$ and for all proper subgroups $K$ of $H$ the morphism $$F(\SigmaGK \wedge \vv)$$ is an isomorphism. We want to show that this assumption implies that $$F(\SigmaGH \wedge \vv)$$ is an isomorphism. Let $\PP(H)$ denote the family of proper subgroups in $H$ and $\PP(H|G)$ denote the family of subgroups in $G$ which are proper subconjugates of $H$. Consider the full subcategory of $\Ho(\SpG)$ of those objects $Z$ such that the map
\[F(1 \wedge \vv) \colon F(Z \wedge \Sigma^8 M) \rightarrow F(Z \wedge M)  \]
is an isomorphism. Since $F$ is exact and preserves infinite coproducts, this subcategory is localising, and hence the morphism
\[F(\GEPH \wedge \Sigma^8 M) \xrightarrow{F(1 \wedge \vv)} F(G \ltimes_H {\Sigma^\infty E\PP(H)}_{+} \wedge M)   \]
is an equivalence. 
To complete the induction step we need to show that
\[ F(\GEPH \wedge M) \xrightarrow{F(\proj)} F(\SigmaGH \wedge M) \]
is a $\PP(H|G)$-equivalence. 
This follows from the next lemma:

\begin{lemma} \label{Fequipres}Let $f: X \to Y$ be a cellular map of $G$-CW complexes that is a stable $\FF$-equivalence for some family $\FF$. Then the map
\[F(\Sigma^{\infty}f_{+}): F(\Sigma^{\infty}X_{+}) \rightarrow F(\Sigma^{\infty}Y_{+})  \]
is an $\FF$-equivalence.
 
\end{lemma}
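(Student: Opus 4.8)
The plan is to reduce the statement to the fact that $F$ is exact, coproduct-preserving, and restricts to the identity (up to the fixed identifications) on the full subcategory generated by the stable orbits. First I would recall that a stable $\FF$-equivalence is detected on fixed points: $f\colon X\to Y$ is a stable $\FF$-equivalence precisely when $\Phi^K(\Sigma^\infty f_+)$ is a weak equivalence for all $K\in\FF$, equivalently when $(\Sigma^\infty f_+)^K$ is a weak equivalence for all $K\in\FF$. Dually and more usefully in the homotopy category, $\Sigma^\infty f_+$ is a stable $\FF$-equivalence if and only if the induced map $[\SigmaGK, \Sigma^\infty X_+]^G_*\to[\SigmaGK,\Sigma^\infty Y_+]^G_*$ is an isomorphism for every $K\in\FF$. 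So the target is to show that $F(\Sigma^\infty f_+)$ induces an isomorphism on $[\SigmaGK,-]^{G,v_1}_*$ for each $K\in\FF$, which by Proposition \ref{generators} (compact generation) and exactness of $F$ reduces to understanding how $F$ interacts with mapping groups out of the $\SigmaGK$.

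Next I would exploit the concrete structure of $F$ coming from the hypotheses carried over from the inductive setup: $F$ is exact, preserves arbitrary coproducts, satisfies $F(\SigmaGK\wedge X)=L_{v_1}\SigmaGK$ and acts as the identity on the conjugation/restriction/transfer maps between the stable orbits. Consequently $F$ restricts to an equivalence on the full localizing subcategory of $\Ho(\SpG)$ generated by the orbits $\SigmaGK$, $K\leq G$ --- which is all of $\Ho(\SpG)$ --- but more to the point it fixes the Burnside-category-enriched mapping complexes between orbits in degree $0$ and, by the calculation in Section \ref{sec:calculation}, is compatible with them in all the degrees that matter. The cellular map $f$ expresses $\Sigma^\infty X_+$ and $\Sigma^\infty Y_+$ as homotopy colimits of cells $\SigmaGK\wedge S^n$, and a stable $\FF$-equivalence is exactly a map that becomes an equivalence after smashing with (or mapping out of) the orbits in $\FF$. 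Applying $F$ cell-by-cell and using that $F$ commutes with the homotopy colimit presentations, the map $F(\Sigma^\infty f_+)$ is assembled from maps of the form $F(\SigmaGK\wedge(\text{attaching data}))$ for $K$ appearing as isotropy of cells; for $K\in\FF$ these are handled directly, and for $K\notin\FF$ the $\FF$-localization kills them on both sides.

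More precisely, I would argue as follows. Let $\widetilde{E\FF}$ denote the cofiber of $E\FF_+\to S^0$; a map $g$ of $G$-spectra is a stable $\FF$-equivalence iff $g\wedge E\FF_+$ is an equivalence iff $g\wedge\widetilde{E\FF}$ is a $\PP$-equivalence for the complementary situation — equivalently, $g$ is an $\FF$-equivalence iff $\Phi^K(g)$ is an equivalence for all $K\in\FF$. By Corollary \ref{geofixedpt}, $F$ intertwines the geometric fixed point functors (up to $L_1$), or more elementarily, since $F$ is exact and preserves coproducts and fixes the orbit category, the functor $F$ takes a cofiber sequence exhibiting the $\FF$-equivalence $f$ into a cofiber sequence with the same property. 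Concretely: $f$ being a stable $\FF$-equivalence means the cofiber $C_f$ has trivial $K$-geometric fixed points for all $K\in\FF$, equivalently $C_f$ lies in the localizing subcategory generated by $\{\SigmaGL : L\notin\FF\}$ together with the induced orbits whose isotropy avoids $\FF$; applying the exact coproduct-preserving functor $F$ sends this localizing subcategory into the corresponding localizing subcategory of $\Ho(\LSpG)$, so $F(C_f)=C_{F(f)}$ again has the property of being an $\FF$-equivalence's cofiber, i.e. $F(\Sigma^\infty f_+)$ is an $\FF$-equivalence. \textbf{The main obstacle} I anticipate is making rigorous the claim that $F$ preserves the relevant localizing subcategory indexed by isotropy: one needs that the generators $\GEPH$-type objects built from $f$ are carried by $F$ to objects with the same fixed-point vanishing, and this requires either invoking the compatibility of $F$ with inflation/geometric fixed points (which follows from the naturality with respect to restrictions and transfers, via the double coset formula and tom Dieck splitting recalled in Section \ref{sec:localcat}) or a careful induction on cells tracking isotropy — the bookkeeping of which orbit types appear, and checking $F$ acts correctly on each, is where the real work lies.
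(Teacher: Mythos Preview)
Your proposal contains a genuine gap, and the route you sketch does not close it. Two specific problems:

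First, the claim that the cofiber $C_f$ ``lies in the localizing subcategory generated by $\{\SigmaGL : L\notin\FF\}$'' is false. Take $\FF=\{e\}$: then $C_f$ being $\FF$-acyclic means its underlying spectrum is contractible, but every $\SigmaGL$ has nontrivial underlying spectrum, so the localizing subcategory they generate contains no nonzero $\{e\}$-acyclic object. The $\FF$-acyclic objects are those with $Z\wedge E\FF_+\simeq *$, and there is no simple orbit-generator description of this subcategory that $F$ would obviously preserve.

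Second, your appeal to Corollary~\ref{geofixedpt} (``$F$ intertwines the geometric fixed point functors'') is a misreading: that corollary is about $L_{v_1}$, not about the abstract exact functor $F$. We only know $F$ agrees with $L_{v_1}$ on the orbits $\SigmaGH$ and on the \emph{degree-zero} maps between them (restrictions, conjugations, transfers). We do \emph{not} know $F$ agrees with $L_{v_1}$ on arbitrary maps, on smash products, or on $E\FF_+$; indeed the whole thrust of Section~\ref{sec:calculation} is that $F$ can deviate from $L_{v_1}$ in positive degrees. So you cannot transport the isotropy separation machinery through $F$ for free.

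The paper's proof is precisely the ``careful induction on cells'' you flag as the obstacle, executed via an Atiyah-Hirzebruch spectral sequence. The cellular filtrations of $X$ and $Y$ give spectral sequences converging to $\pi_*^U(F(\Sigma^\infty X_+)\wedge\mathbb{K})$ and $\pi_*^U(F(\Sigma^\infty Y_+)\wedge\mathbb{K})$ for $U\in\FF$, where $\mathbb{K}=\varepsilon^*K_{(2)}$. The crucial observation is that the boundary maps $\partial_p$ in the cellular chain complex, and the map induced by $f$ on associated graded pieces, are sums of restrictions, conjugations, and transfers between orbits---exactly the maps on which $F$ is known to coincide with $L_{v_1}$. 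Hence $F$ carries the cellular chain complex of $X$ to that of $L_{v_1}\Sigma^\infty X_+$, and similarly for $Y$ and $f$. This identifies the $E^2$-pages with Bredon homology $H_*^U(-;\pi_*^-\mathbb{K})$, on which $f$ induces an isomorphism because $U\in\FF$ and $f$ is a stable $\FF$-equivalence. The spectral sequence comparison then gives the result. The point is that the only control we have over $F$ is on Burnside-category maps in degree zero, and the cellular-chain/Bredon-homology argument is what lets us leverage exactly that.
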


\begin{proof} The key idea is that $F$ preserves cellular chains.   Let $\mathbb{K}$ denote the inflation $\varepsilon^*K$ of the $2$-local equivariant $K$-theory spectrum $K$. Then a map $A \rightarrow B$ in $\Ho(\LSpG)$ is an $\FF$-equivalence if and only if $A \wedge \mathbb{K} \rightarrow B \wedge \mathbb{K}$ is an $\FF$-equivalence in $\Ho(\SpG)$.

Consider any $U \in \FF$. Since $F$ commutes with infinite coproducts, the cellular filtrations of $X$ and $Y$ give us strongly convergent Atiyah-Hirzebruch type spectral sequences
\[E^1_{pq}=\pi^U_q(F(\Sigma^{\infty} {X^p/X^{p-1}}_+) \wedge \mathbb{K}) ) \Rightarrow \pi^U_{p+q}(F(\Sigma^{\infty} X_{+}) \wedge \mathbb{K}) \]
and
\[E^1_{pq}=\pi^U_q(F(\Sigma^{\infty} {Y^p/Y^{p-1}}_+) \wedge \mathbb{K})) \Rightarrow \pi^U_{p+q}(F(\Sigma^{\infty} Y_{+}) \wedge \mathbb{K}). \]
Since the map $f$ is cellular, it induces a map of the latter spectral sequences. As furthermore $F(\SigmaGH) = L_{v_1}\SigmaGH$ and $F$ respects conjugations, restrictions and transfers, we can see that there is a commutative diagram for $X$ (we omit the $\Sigma^{\infty}$ symbol from notation below)
\[\xymatrix{\cdots \ar[r] & F(\Sigma^{-p}{X^p/X^{p-1}}_+) \ar[d]^\cong \ar[r]^-{F(\partial_p)} & F(\Sigma^{-p+1}{X^{p-1}/X^{p-2}}_+) \ar[d]^\cong \ar[r] & \cdots \ar[r] & F(X^0_+) \ar[d]^\cong \\  \cdots \ar[r] & L_{v_1}(\Sigma^{-p}{X^p/X^{p-1}}_+) \ar[r]^-{L_{v_1}(\partial_p)} & L_{v_1}(\Sigma^{-p+1}{X^{p-1}/X^{p-2}}_+) \ar[r] & \cdots \ar[r] & L_{v_1}(X^0_+) }\]
and similarly for $Y$. For the same reason, we also have a commutative diagram
\[\xymatrix{F(\Sigma^{-p}{X^p/X^{p-1}}_+) \ar[d]^\cong \ar[r]^-{F(f)} & F(\Sigma^{-p}{Y^p/Y^{p-1}}_+) \ar[d]^{\cong} \\ L_{v_1}(\Sigma^{-p}{X^p/X^{p-1}}_+) \ar[r]^-{L_{v_1}(f)} & L_{v_1}(\Sigma^{-p}{Y^p/Y^{p-1}}_+),  } \]
where the isomorphisms are the same as in the previous diagram. Roughly speaking, these facts imply that $F$ preserves cellular chain complexes. Now this allows us to conclude that $f$ induces an isomorphism on $E^2$ terms. Indeed, the first commutative diagram implies that there are isomorphisms
\[E^2_{pq}(X) \cong H_p^U(X, \pi_q^{-} \mathbb{K}) \]
and
\[E^2_{pq}(Y) \cong H_p^U(Y, \pi_q^{-} \mathbb{K}), \]
where $H_p^U(-, \pi_q^{-} \mathbb{K})$ denotes Bredon homology. On the other hand the second commutative diagram gives us a commutative diagram
%
\[\xymatrix{E^2_{pq}(X) \ar[d]^\cong \ar[r]^{F(f)_*} & E^2_{pq}(Y) \ar[d]^\cong \\ H_p^U(X, \pi_q^{-} \mathbb{K}) \ar[r]^{f_*} & H_p^U(Y, \pi_q^{-} \mathbb{K}). } \]
But the lower horizontal map is an isomoprhism since $U$ is in $\FF$ and $f$ is a stable $\FF$-equivalence. This finishes the proof. \end{proof}

Now we go back to our induction to show that $F(\SigmaGH \wedge \vv)$ is an isomorphism. Lemma \ref{Fequipres} implies that the morphism $$F(\proj \wedge 1): F(\GEPH \wedge M) \rightarrow F(\SigmaGH \wedge M)$$ is a $\PP(H|G)$-equivalence. Next, the commutative diagram
\[\xymatrix{F(\GEPH \wedge \Sigma^8 M) \ar[r]^-{F(1 \wedge \vv)} \ar[d]^{F(\proj \wedge 1)} & F(\GEPH \wedge M) \ar[d]^{F(\proj \wedge 1)} \\ F(\SigmaGH \wedge \Sigma^8 M) \ar[r]^-{F(1 \wedge \vv)}  &  F(\SigmaGH \wedge M) } \]
shows that the map 
\[F(1 \wedge \vv): F(\SigmaGH \wedge \Sigma^8 M) \rightarrow F(\SigmaGH \wedge M) \]
is a $\PP(H|G)$-equivalence. In order to show that it is an isomorphism, it suffices to show that the induced map on $H$-geometric fixed points
\[\Phi^H(F(1 \wedge \vv)): \Phi^H(F(\SigmaGH \wedge \Sigma^8 M)) \rightarrow \Phi^H(F(\SigmaGH \wedge M)) \]
is an isomorphism. For this consider the composite ($N_G(H)$ denotes the normalizer of $H$ in $G$)
\begin{multline} 
\Ho(\Sp_{W_G(H)}) \xrightarrow{\varepsilon^*}  \Ho(\Sp_{N_G(H)}) \xrightarrow{G \ltimes_{N_G(H)} -}  \Ho(\Sp_G) \xrightarrow{F} \Ho(\LSpG) \rightarrow  \nonumber\\\xrightarrow{\Res_{N_G(H)}^G}  \Ho(L_{v_1}\Sp_{N_G(H)}) \xrightarrow{\Phi^H}  \Ho(L_{v_1}\Sp_{W_G(H)}). 
 \end{multline}
which we denote by $$\hat{F} \colon \Ho(\Sp_{W_G(H)}) \rightarrow \Ho(L_{v_1}\Sp_{W_G(H)}).$$ When restricted to free $W_G(H)$-spectra, $\hat{F}$ satisfies the conditions of Lemma \ref{lem:maincalc}, implying that $$\hat{F}(\Sigma^{\infty}W_G(H)_+ \wedge \vv): \hat{F}(\Sigma^{\infty} W_G(H)_+ \wedge \Sigma^8M) \longrightarrow \hat{F}(\Sigma^{\infty} W_G(H)_+ \wedge M) $$ is an isomorphism. But by unraveling definitions we see that the latter map is equal to $\Phi^H(F(\SigmaGH \wedge \vv ))$ which finishes the proof of the fact that the spectrum $\mathbf{R}\Hom(X,Y)$ is $v_1$-local.

We conclude this section by the following corollary of Lemma \ref{Fequipres} which will be used below:

\begin{corollary}\label{lem: F(GEPH)} There is an isomorphism $$F(\GEPH ) \cong L_{v_1} \GEPH$$ such that the diagram
\[\xymatrix{F(\GEPH) \ar[r]^{\cong} \ar[d]^{F(\proj)} & L_{v_1} \GEPH \ar[d]^{\proj} \\ F(\SigmaGH) \ar@{=}[r] & L_ {v_1} \SigmaGH   } \]
commutes.
\end{corollary}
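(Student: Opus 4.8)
\textbf{Proof plan for Corollary \ref{lem: F(GEPH)}.}

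The plan is to extract the isomorphism directly from the mechanics of the proof of Lemma \ref{Fequipres}, which was already applied to $X = E\PP(H)$ (viewed as a $G$-space via restriction/induction in the appropriate sense, or rather to the $G$-CW complex underlying $G \ltimes_H E\PP(H)_+$). First I would recall that in that proof, for the $G$-CW complex whose suspension spectrum is $\GEPH$, we produced a commutative ladder identifying the $F$-image of the cellular chain complex with the $L_{v_1}$-image of the same cellular chain complex, via isomorphisms on each term coming from $F(\SigmaGH) = L_{v_1}\SigmaGH$ together with compatibility of $F$ with restrictions, conjugations and transfers. Since $F$ is exact and commutes with infinite coproducts, the object $\GEPH$ is built (as a homotopy colimit of its skeletal filtration) out of the orbit cells, and $F$ applied to this filtration agrees with $L_{v_1}$ applied to it, term by term, compatibly with the attaching maps. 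Passing to the colimit (using that $F$ preserves homotopy colimits and that $L_{v_1}$ is smashing hence also preserves them) yields the desired isomorphism $F(\GEPH) \cong L_{v_1}\GEPH$.

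Next I would address commutativity of the square. The projection $\proj \colon G \ltimes_H E\PP(H)_+ \to G/H_+$ is a cellular map of $G$-CW complexes (after giving $G/H_+$ the trivial CW structure with a single top cell). Thus $\proj$ is covered by the same bookkeeping: the induced map on cellular chains, under the identifications above, is precisely $L_{v_1}(\proj)$, because both $F$ and $L_{v_1}$ are being computed on the same combinatorial data and $F$ respects the relevant equivariant structure maps. Concretely, the map on the bottom skeleton / the top cell of $G/H_+$ is literally $F(\SigmaGH) = L_{v_1}\SigmaGH$, so the bottom horizontal arrow of the square is the identity (after our normalisation $F(\SigmaGH) = L_{v_1}\SigmaGH$), and the square commutes by naturality of the skeletal-filtration construction with respect to cellular maps.

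The main obstacle — really the only subtlety — is making precise that the isomorphism constructed term-by-term on the cellular filtration assembles to a single well-defined isomorphism of the homotopy colimits, and that this assembled isomorphism is natural enough in cellular maps to make the square commute. I would handle this by noting that $\GEPH$ is compact (it is a finite $G$-CW complex up to the relevant filtration, or in any case $F$ and $L_{v_1}$ both commute with the sequential homotopy colimit defining it), so the identification of filtrations gives an identification of colimits; naturality with respect to the cellular map $\proj$ then follows formally from the fact that the whole construction in Lemma \ref{Fequipres} was carried out functorially in cellular maps of $G$-CW complexes. Everything else is a direct unwinding of the already-established compatibilities of $F$ with the Burnside-category structure.
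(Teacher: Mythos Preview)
Your approach has a genuine gap. You claim that the identification of $F$ with $L_{v_1}$ on the cellular chain complex of $G\ltimes_H E\PP(H)_+$ (which is indeed what the proof of Lemma \ref{Fequipres} produces) assembles to an isomorphism $F(\GEPH)\cong L_{v_1}\GEPH$ of the filtered objects themselves. But Lemma \ref{Fequipres} only identifies the \emph{associated graded} of the skeletal filtration together with the $d_1$-maps; it never constructs an isomorphism of the skeleta, let alone of the colimit. To pass from the $p$-skeleton to the $(p{+}1)$-skeleton you need to know that $F(a_{p+1})$ corresponds to $L_{v_1}(a_{p+1})$ under the (hypothetical) isomorphism $F(X^p)\cong L_{v_1}(X^p)$, where $a_{p+1}$ is the attaching map landing in the full skeleton $X^p$, not merely in its top cells. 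The Burnside compatibility of $F$ controls only the composite of $a_{p+1}$ with the projection $X^p\to X^p/X^{p-1}$, i.e.\ the $d_1$-differential; the remaining ``higher attaching'' data is precisely what shows up as higher differentials and is not pinned down by your hypotheses. Moreover, even at the first step the isomorphism of cofibers is only determined up to the usual fill-in ambiguity, so there is no canonical identification to carry forward. (Incidentally, $E\PP(H)$ is not a finite complex, so the compactness remark is also off.)

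The paper sidesteps this reconstruction problem entirely. It observes that $\proj$ is a $\PP(H|G)$-equivalence, so $L_{v_1}(\proj)$ is one too; and Lemma \ref{Fequipres} says that $F(\proj)$ is a $\PP(H|G)$-equivalence as well. Now both $L_{v_1}\GEPH$ and $F(\GEPH)$ are built from orbits $L_{v_1}\Sigma^\infty G/K_+$ with $K\in\PP(H|G)$, so each is a $\PP(H|G)$-cellular object mapping by a $\PP(H|G)$-equivalence to the same target $L_{v_1}\SigmaGH$. This characterises the $\PP(H|G)$-cellular approximation of $L_{v_1}\SigmaGH$ up to canonical isomorphism over the base (this is \cite[Lemma 7.1.5]{P16}), yielding the commuting square directly. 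The key idea you are missing is this universal-property characterisation via cellularisation with respect to the family; it replaces the uncontrolled inductive reconstruction with a one-line uniqueness statement.
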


\begin{proof} The cellular map 
\[ \proj \colon \GEPH \rightarrow \SigmaGH\] is a $\PP[H|G]$-equivalence, i.e. a $\pi^K_*$-isomorphism for all subgroups $K$ which are subconjugated to $H$. Hence so is its localisation 
\[\proj \colon L_{v_1}\GEPH \rightarrow L_{v_1}\SigmaGH.\]
By \cite[Lemma 7.1.5]{P16}  it suffices to show that $F(\proj)$ is a $\PP[H|G]$-equivalence. This now follows from Lemma \ref{Fequipres}. \end{proof}


\section{The Quillen equivalence}\label{sec:quillenequivalence}

We showed in Section \ref{sec:revisited} that the Quillen adjunction from Section \ref{sec:quillenfunctor}
\[
- \wedge X: \SpG \lradjunction \mathcal{C}: \Hom(X,-)
\]
factors through $\LSpG$, i.e. that
\[
- \wedge X: \LSpG \lradjunction \mathcal{C}: \Hom(X,-)
\]
is also a Quillen adjunction. Moreover the diagram of left Quillen functors
\[\xymatrix{\SpG \ar[rr]^-{- \wedge X} \ar[d]^-{1} & & \mathcal{C} \\ \LSpG \ar[urr]_-{- \wedge X} & &  }\]
commutes on the nose. In this section we show that our adjunction is actually a Quillen equivalence.  

\bigskip
Define the exact functor $\mathbb{F} : \Ho(\LSpG) \rightarrow \Ho(\LSpG)$ to be the composite
\[\Ho(\LSpG) \xrightarrow{- \wedge^{\mathbf{L}} X } \Ho(\C) \xrightarrow{\Psi^{-1}} \Ho(\LSpG).\]
By the commutativity of the latter diagram we see that the diagram
\[\xymatrix{\Ho(\SpG) \ar[r]^-{F} \ar[d]^{L_{v_1}} & \Ho(\LSpG) \\ \Ho(\LSpG) \ar[ur]_-{\FFF} &   }\]
commutes up to a natural isomorphism. Our assumptions on $\Psi$ imply that we have 
\[\FFF(L_{v_1} \SigmaGH ) = L_{v_1} \SigmaGH   \]
for any subgroup $H$ of $G$ and $\mathbb{F}$ respects conjugations, restrictions and transfers. 

\begin{lemma} \label{lem: FFF(M)} There is a commutative diagram

\[\xymatrix{\FFF(L_{v_1}\GEPH  ) \ar[d]^{\FFF(\proj )} \ar[r]^-{\cong} & L_{v_1}\GEPH  \ar[d]^{\proj } \\ \FFF(L_{v_1}\SigmaGH )  \ar@{=}[r] & L_{v_1}\SigmaGH } \]

with the upper horizontal arrow being an isomorphism.

\end{lemma}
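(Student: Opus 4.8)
The statement of Lemma~\ref{lem: FFF(M)} is the $v_1$-local analogue of Corollary~\ref{lem: F(GEPH)}, and the plan is to deduce it directly from that corollary by transporting along the natural isomorphism relating $F$ and $\FFF$. Recall from the commutative triangle just above that $\FFF \circ L_{v_1} \cong F$ as functors $\Ho(\SpG) \to \Ho(\LSpG)$, and that for the $G$-CW objects appearing here one has $L_{v_1}(\GEPH) = L_{v_1}\GEPH$ and $L_{v_1}(\SigmaGH) = L_{v_1}\SigmaGH$ by construction. So first I would apply $\FFF$ to the (point-set, or rather $\Ho(\SpG)$-level) map $\proj \colon \GEPH \to \SigmaGH$ after localising: that is, consider $\FFF(\proj) \colon \FFF(L_{v_1}\GEPH) \to \FFF(L_{v_1}\SigmaGH) = L_{v_1}\SigmaGH$, where the last identification is the assumed $\FFF(L_{v_1}\SigmaGH) = L_{v_1}\SigmaGH$.

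The key step is then to identify this with $F(\proj)$ via the natural isomorphism $\FFF \circ L_{v_1} \cong F$. Naturality of that isomorphism applied to the morphism $\proj$ in $\Ho(\SpG)$ gives a commutative square relating $\FFF(L_{v_1}\proj) = \FFF(\proj)$ to $F(\proj)$, with vertical isomorphisms $\FFF(L_{v_1}\GEPH) \cong F(\GEPH)$ and $\FFF(L_{v_1}\SigmaGH) \cong F(\SigmaGH)$. Composing this with the isomorphism $F(\GEPH) \cong L_{v_1}\GEPH$ from Corollary~\ref{lem: F(GEPH)}, which sits in a commutative square over $F(\proj) = \proj$ (under $F(\SigmaGH) = L_{v_1}\SigmaGH$), I obtain the desired isomorphism $\FFF(L_{v_1}\GEPH) \cong L_{v_1}\GEPH$ compatible with the projections to $L_{v_1}\SigmaGH$. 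Pasting the two commutative squares yields exactly the diagram in the statement.

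The only point requiring a little care — and the place where I expect the main (mild) obstacle to lie — is bookkeeping with the identifications on the target: one must check that the natural isomorphism $\FFF\circ L_{v_1}\cong F$, when evaluated on $\SigmaGH$, is compatible with the chosen identifications $F(\SigmaGH) = L_{v_1}\SigmaGH$ (from the induction set-up in Section~\ref{sec:revisited}) and $\FFF(L_{v_1}\SigmaGH) = L_{v_1}\SigmaGH$ (from the assumptions on $\Psi$). Both of these come from the same source, namely the isomorphisms $\Psi(L_{v_1}\SigmaGH) \cong \SigmaGH \wedge^{\mathbf L} X$ and the chosen inverse $\Psi^{-1}$ with $\Psi^{-1}(\SigmaGH \wedge X) = L_{v_1}\SigmaGH$, so they do agree; it is essentially a diagram chase to confirm that the composite isomorphism on the bottom row is the identity on $L_{v_1}\SigmaGH$ as claimed. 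Once this compatibility is recorded, the lemma follows formally, with no new homotopy-theoretic input beyond Corollary~\ref{lem: F(GEPH)} and the naturality already established for $\FFF$.
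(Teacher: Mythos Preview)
Your proposal is correct and follows exactly the same approach as the paper: the paper's proof simply writes down the commutative square coming from the natural isomorphism $\FFF \circ L_{v_1} \cong F$ applied to $\proj$, and then invokes Corollary~\ref{lem: F(GEPH)}. Your additional care about the compatibility of the identifications on $\SigmaGH$ is reasonable but is left implicit in the paper.
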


\begin{proof}
This follows from Corollary \ref{lem: F(GEPH)} and the commutative diagram 
\[\xymatrix{\FFF(L_{v_1}\GEPH ) \ar[d]^{\FFF(\proj)} \ar[r]^{\cong} & F(\GEPH) \ar[d]^{F(\proj)} &  \\ \FFF(L_{v_1}\SigmaGH )  \ar@{=}[r] & F(\SigmaGH).} \]
\end{proof}

\subsection{Reduction to mod-$2$} \label{mod2} In order to prove the main theorem, it suffices to show that the functor $\mathbb{F}$ is an equivalence which amounts to showing that the map
\[ \FFF: [L_{v_1}\SigmaGH, L_{v_1}\SigmaGK]^{G,v_1}_* \longrightarrow [\FFF(L_{v_1}\SigmaGH), \FFF(L_{v_1}\SigmaGK)]^{G,v_1}_*\]
is an isomorphism for any $H,K \leq G$. (As the set of objcets $\{L_{v_1}\SigmaGH \; | \; H \leq G  \}$ compactly generates $\Ho(\LSpG)$ (Proposition \ref{generators}).)

The techniques from \cite{P16} cannot be directly applied here since the source and target are not finite when $\ast=-2$ and $\ast=0$. However after mod-$2$ reduction they become finite, which will be proved below. Before that we will show that the integral result follows from the combination of the rational result with the mod-$2$ result. 

The map 
\[ \FFF: [L_{v_1}\SigmaGH, L_{v_1}\SigmaGK]^{G,v_1}_* \longrightarrow [\FFF(L_{v_1}\SigmaGH), \FFF(L_{v_1}\SigmaGK)]^{G,v_1}_*\]
is isomorphic to the map induced on homotopy groups by the morphism of mapping spectra (here $fib$ denotes a fibrant replacement)

\[
\xymatrix{ \Hom_{\LSpG}(L_{v_1}\SigmaGH, L_{v_1}\SigmaGK) \ar[d]_{- \wedge X} \\  \Hom_{\mathcal{C}}(L_{v_1}\SigmaGH \wedge X, L_{v_1}\SigmaGK \wedge X) \ar[d] \\  \Hom_{\mathcal{C}}(L_{v_1}\SigmaGH \wedge X, (L_{v_1}\SigmaGK \wedge X)^{fib}). } 
\]

Since the source and target of this morphism of spectra are $2$-local, it suffices to check that the morphism is a rational equivalence and induces isomorphisms on mod-$2$ homotopy groups. We know that the functor $\FFF$ preserve transfers, conjugations and restrictions between cosets. This implies that the latter map is a rational equivalence. 

Checking that the latter map induces isomorphism on mod-$2$ homotopy groups amounts to checking that the map
\[[L_{v_1} \SigmaGH \wedge M , L_{v_1} \SigmaGK]^{G,v_1}_* \xrightarrow{-\wedge X } [L_{v_1} \SigmaGH \wedge M \wedge X , L_{v_1} \SigmaGK \wedge X]^{\C}_* \]
is an isomorphism. But this is the case if and only if the map
\[ \FFF: [L_{v_1}\SigmaGH \wedge M, L_{v_1}\SigmaGK]^{G,v_1}_* \longrightarrow [\FFF(L_{v_1}\SigmaGH \wedge M), \FFF(L_{v_1}\SigmaGK)]^{G,v_1}_*\] 
is an isomorphism.

Thus in order to prove the main theorem, it suffices to show that the last map is an isomorphism.
Since $$\FFF(L_{v_1}\SigmaGH \wedge M) \cong L_{v_1}\SigmaGH \wedge M,$$ the source and target are abstractly isomorphic. In fact the following lemma shows that they are finite.

\begin{lemma} \label{lem:finiteness} For any integer $l \in \Z$, the group
\[[L_{v_1}\SigmaGH \wedge M, L_{v_1}\SigmaGK]^{G,v_1}_l\]
is finite
 
\end{lemma}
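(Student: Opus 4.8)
The plan is to reduce the computation of $[L_{v_1}\SigmaGH \wedge M, L_{v_1}\SigmaGK]^{G,v_1}_l$ to a statement about homotopy groups of non-equivariant $K$-local spectra, using the tom Dieck splitting together with compatibility of $L_{v_1}$ with fixed points established in Section~\ref{sec:localcat}. First I would use the Wirthm\"uller isomorphism and the adjunction $(G\ltimes_K -, \Res^G_K)$ to rewrite
\[
[L_{v_1}\SigmaGH \wedge M, L_{v_1}\SigmaGK]^{G,v_1}_l \cong [\Res^G_K(L_{v_1}\SigmaGH \wedge M), L_{v_1}S^0_K]^{K,v_1}_l,
\]
and then apply the $v_1$-local double coset formula to express $\Res^G_K(\SigmaGH)$ as a finite wedge of objects of the form $K\ltimes_{{}^gH\cap K}S^0$. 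After smashing with $M$ and shifting, this expresses the group as a finite direct sum of groups of the form $[L_{v_1}\Sigma^\infty K/L_+ \wedge M, L_{v_1}S^0_K]^{K,v_1}_*$ for various subgroups $L\le K$, so it suffices to treat groups of the type $[L_{v_1}\SigmaGK \wedge M, L_{v_1}S^0_G]^{G,v_1}_l$ — in other words, the mod-$2$ $v_1$-local equivariant homotopy groups $\pi^{G,v_1}_l(L_{v_1}\SigmaGK \wedge M)$ in the appropriate variance.

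Next I would translate this into fixed points. By adjunction and Spanier--Whitehead self-duality of $M$,
\[
[L_{v_1}\SigmaGK \wedge M, L_{v_1}S^0_G]^{G,v_1}_l \cong [\Sigma^\infty G/K_+, DM \wedge L_{v_1}S^0_G]^{G}_{l} \cong \pi_l\big((DM \wedge \varepsilon^* L_1 S^0)^K\big),
\]
using Proposition~\ref{localsphere} to identify $L_{v_1}S^0_G$ with $\varepsilon^* L_1S^0$ and the standard isomorphism $(Z\wedge\varepsilon^*Y)^K\cong Z^K\wedge Y$ together with the $v_1$-local tom Dieck splitting; here the relevant fixed-point spectrum is a finite wedge of $K$-local spectra of the form $L_1\big(\Sigma^\infty (EW_K(L)_+ \wedge_{W_K(L)} \ast) \wedge DM\big)$, i.e.\ $L_1$ of suspension spectra of classifying spaces $BW$ (for various subquotients $W$ of $G$) smashed with the finite spectrum $DM$. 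So the whole question comes down to: \emph{for a finite group $W$, is $\pi_l\big(L_1(\Sigma^\infty_+ BW \wedge DM)\big)$ a finite group for every $l$?}

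The hard part will be this last finiteness statement for $K$-local homotopy groups of $\Sigma^\infty_+ BW$. I would argue as follows. Since $DM$ is a finite spectrum, it suffices to show that $\pi_* L_1(\Sigma^\infty_+ BW)$ is finite in each degree. Now $\Sigma^\infty_+ BW$ is a connective spectrum, and after $K$-localisation one has $L_1(\Sigma^\infty_+ BW) \simeq L_1 S^0 \vee L_1(\Sigma^\infty BW)$ where $\widetilde{BW}$ is rationally trivial (as $BW$ has finite homotopy groups above degree $0$, hence trivial rational reduced homology). Therefore $L_1(\Sigma^\infty BW)$ is a $v_1$-local spectrum whose $\mathbb{Q}$-homology vanishes, and $\pi_* L_1 S^0$ is finite in each positive degree with $\pi_0 L_1 S^0 = \Z_{(2)}\oplus\Z/2$ (from the table in Section~\ref{sec:calculation}). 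For the reduced part, mod-$2$ $K$-theory $K/2_*(BW)$ is finitely generated over $K/2_* = \mathbb{F}_2[v_1^{\pm 1}]$ (Atiyah--Segal / finiteness of $K^*(BW)$), and a $K(1)$-local spectrum whose mod-$2$ $K$-homology is finitely generated and bounded-below-periodic has degreewise finite homotopy groups; concretely one runs the $K$-local (or $E(1)$-based) Adams spectral sequence $\mathrm{Ext}^{s,t}_{K/2_*K/2}(K/2_*, K/2_*X) \Rightarrow \pi_{t-s}X/2$ whose $E_2$-term is finite in each total degree because the comodule $K/2_*(BW)$ is finite in each degree, and this, combined with the mod-$2$ homotopy exact sequence, gives finiteness of $\pi_l$ integrally. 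I expect the identification of the fixed-point spectrum as $L_1$ of classifying-space suspension spectra via the tom Dieck splitting to be the cleanest route, and the genuinely delicate point to be the clean statement and citation of finiteness of $K$-local homotopy of $BW$ — this is where I would lean most heavily on known computations of $K^*(BW)$ and standard $K$-local Adams spectral sequence arguments.
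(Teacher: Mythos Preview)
Your reduction is essentially the paper's: via the Wirthm\"uller isomorphism, the double coset formula and the $v_1$-local tom Dieck splitting you correctly bring the statement down to showing that
\[
\pi_l\bigl(L_1(\Sigma^\infty_+ BW)\wedge M\bigr)
\]
is finite for every finite group $W$ and every integer $l$. (The paper moves $M$ to the other side by Spanier--Whitehead duality first and then runs the same adjunctions, arriving at exactly this question.)

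The gap is in your argument for this last finiteness. The step ``since $DM$ is a finite spectrum, it suffices to show that $\pi_* L_1(\Sigma^\infty_+ BW)$ is finite in each degree'' is a valid implication with a false hypothesis: you immediately observe that $\pi_0 L_1 S^0=\Z_{(2)}\oplus\Z/2$, so the integral homotopy of $L_1(\Sigma^\infty_+ BW)$ is \emph{not} degreewise finite and the Moore spectrum cannot simply be dropped. Your fallback for the reduced summand --- deducing finiteness of $\pi_*\,L_1(\Sigma^\infty BW)$ from finiteness of its mod-$2$ homotopy via the $K(1)$-based Adams spectral sequence together with the mod-$2$ long exact sequence --- is also not a valid deduction in general: a $2$-local, rationally trivial spectrum can have a Pr\"ufer group $\Z/2^\infty$ in a single degree while having finite mod-$2$ homotopy in every degree, so ``$\pi_*(X;\Z/2)$ finite and $X_{\mathbb{Q}}\simeq *$'' does not force $\pi_*X$ finite.

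The fix is to keep $M$ in the picture throughout, and this is exactly what the paper does. It invokes the Adams--Baird--Ravenel description of $L_1$ on torsion spectra (see \cite[Corollary~4.4]{Bou79}) to reduce to showing that $KO_l(B\Gamma_+\wedge M)$ is finite for all $l$; then it uses the equivalence $KO\wedge M\wedge \Cone(\eta)\simeq K(1)$ together with Ravenel's theorem \cite{Rav82} that $K(1)_l(B\Gamma_+)$ is finite for every finite group $\Gamma$ and every $l$, and finishes with the observation that $\eta^3=0$ in $\pi_*KO$ gives a finite filtration of $KO_l(B\Gamma_+\wedge M)$ with finite subquotients. Your spectral-sequence idea can be repaired along similar lines if you run it directly for $L_1(\Sigma^\infty_+ BW\wedge M)$ rather than for $L_1(\Sigma^\infty_+ BW)$, since the target is then bounded torsion (as $4\cdot id_M=0$) and degreewise finiteness of the $E_2$-term really does propagate to the abutment.
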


\begin{proof} By duality we have an isomorphism
\[[L_{v_1}\SigmaGH \wedge M, L_{v_1}\SigmaGK]^{G,v_1}_l \cong [L_{v_1}\SigmaGH, L_{v_1}\SigmaGK \wedge M]^{G,v_1}_l \]

Now Corollary \ref{fixedpt} and the localised double coset formula, Wirthm\"uller isomorphism and tom Dieck splitting (see Section \ref{sec:localcat}) give us an isomorphism
\begin{multline}
[L_{v_1}\SigmaGH, L_{v_1}\SigmaGK \wedge M]^{G,v_1}_l \nonumber\\ \cong \bigoplus_{[g] \in H \setminus G /K} \;\;\; \bigoplus_{(U) \leq H \cap {}^g K } \pi_l(L_1 \Sigma^{\infty} BW_{H \cap {}^g K}(U)_+ \wedge M). 
\end{multline}

Hence it suffices to show that for any finite group $\Gamma$, the homotopy group
\[\pi_l(L_1 \Sigma^{\infty} B\Gamma_+ \wedge M) \]
is finite. The results of Adams-Baird and Ravenel (see e.g. \cite[Corollary 4.4]{Bou79}) imply that this will follow if we show that
\[KO_l(B\Gamma_+ \wedge M) \]
is finite for any $l \in \Z$. Recall that $$KO \wedge M \wedge \Cone(\eta) \simeq K(1),$$ where $K(1)$ is the mod-$2$ complex $K$-theory, i.e. the first Morava $K$-theory at prime 2. Ravenel has shown in \cite{Rav82} that $K(1)_l(B\Gamma_+)$ is finite for any integer $l$. This together with $\eta^3=0$ in $\pi_*KO$, implies that there is a finite filtration of $KO_l(B\Gamma_+ \wedge M)$ with finite  subquotients. Hence $KO_l(B\Gamma_+ \wedge M)$ is finite which proves our claim.
\end{proof}

\subsection{Reduction to the case $H=K$} \label{Reduction}

After the work of the previous subsection we can now follow \cite[Section 5]{P16} and state certain results which immediately follow from the methods of \cite{P16}. We do not provide the explicit proofs as they are verbatim translations of the proofs contained in \cite[Section 5]{P16}.

Let $H$ and $K$ be subgroups of $G$. For the rest of this subsection we fix once and for all a set $\{g \}$ of double coset representatives for $K \setminus G/ H$. Recall that for any $g \in G$, the conjugated subgroup $gHg^{-1}$ is denoted by ${}^gH$. Further let
\[
\xymatrix{
 [ L_{v_1} \SigmaGH \wedge M,  L_{v_1} \SigmaGK]_*^{G,v_1} \ar[d]_{\kappa_g} \\ [L_{v_1}  \Sigma^{\infty} G/ ({}^g H \cap K)_{+} \wedge M, L_{v_1} \Sigma^{\infty} G/({}^g H \cap K)_{+} ]_*^{G,v_1}
 }
\]
stand for the map which is defined by the following commutative diagram
$$\xymatrix{[L_{v_1} \SigmaGH \wedge M, L_{v_1} \SigmaGK]_*^{G,v_1}  \ar[d]_-{{(g \wedge 1)}^*} \ar[r]^-{\kappa_g} & [L_{v_1} \Sigma^{\infty} G/({}^g H \cap K)_{+} \wedge M, L_{v_1} \Sigma^{\infty} G/({}^g H \cap K)_{+}]_*^{G,v_1} \\ [L_{v_1} \Sigma^{\infty} G/{}^g H_{+} \wedge M, L_{v_1} \Sigma^{\infty} G/ K_{+}]_*^{G,v_1} \ar[r]^-{(\tr^K_{{}^g H \cap K})_*} & [L_{v_1} \Sigma^{\infty} G/{}^g H_{+} \wedge M, L_{v_1} \Sigma^{\infty} G/({}^g H \cap K)_{+}]_*^{G,v_1}. \ar[u]_-{(\res^{{}^g H}_{{}^g H \cap K} \wedge 1)^*}. }$$
The map
\begin{footnotesize}
\[
\xymatrix{[L_{v_1} \SigmaGH \wedge M, L_{v_1} \SigmaGK]_*^{G,v_1} \ar[d]^-{(\kappa_g)_{[g] \in K \setminus G /H}} \\ \bigoplus_{[g] \in K \setminus G /H} [L_{v_1} \Sigma^{\infty} G/({}^g H \cap K)_{+} \wedge M , L_{v_1} \Sigma^{\infty} G/({}^g H \cap K)_{+}]_*^{G,v_1}}
\]
\end{footnotesize}
is a split monomorphism. This follows exactly as \cite[Proposition 5.1.1]{P16}. Now consider the commutative diagram (keeping $\FFF(L_{v_1} \SigmaGL \wedge M) \cong L_{v_1} \SigmaGL \wedge M$ for any $L \leq G$ in mind)
\begin{tiny}\[ \xymatrix{[L_{v_1} \SigmaGH \wedge M, L_{v_1} \SigmaGK]_*^{G,v_1} \ar[d]^{\FFF} \ar[rrr]^-{(\kappa_g)_{[g] \in K \setminus G /H}} &&& \bigoplus_{[g] \in K \setminus G /H} [L_{v_1} \Sigma^{\infty} G/({}^g H \cap K)_{+} \wedge M , L_{v_1} \Sigma^{\infty} G/({}^g H \cap K)_{+}]_*^{G,v_1} \ar[d]^\FFF \\ [L_{v_1} \SigmaGH \wedge M, L_{v_1} \SigmaGK]_*^{G,v_1} \ar[rrr]^-{(\kappa_g^\FFF)_{[g] \in K \setminus G /H}} &&& \bigoplus_{[g] \in K \setminus G /H} [L_{v_1} \Sigma^{\infty} G/({}^g H \cap K)_{+} \wedge M , L_{v_1} \Sigma^{\infty} G/({}^g H \cap K)_{+}]_*^{G,v_1}.  } \]\end{tiny}
Here $\kappa_g^\FFF$ is defined as $\kappa_g$ after applying $\FFF$ to the appropriate morphisms. Since the upper horizontal map is injection, it follows that injectivity of the right map implies injectivity of the left map. By Lemma \ref{lem:finiteness}, the source and target of the left hand map are finite of the same order. The same holds for the right hand map. Hence in order to prove the main theorem it suffices to restrict our attention to the mod-$2$ endomorphisms and check that the map
\[ \FFF: [L_{v_1}\SigmaGH \wedge M, L_{v_1}\SigmaGH]^{G,v_1}_* \longrightarrow [\FFF(L_{v_1}\SigmaGH \wedge M), \FFF(L_{v_1}\SigmaGH)]^{G,v_1}_*\] 
is an isomorphism for any subgroup $H$ of $G$. 

\subsection{Proof of the main theorem} Before completing the proof of the main theorem, we state the following proposition which is an analog of Proposition 6.3.2 in \cite{P16}. The proof follows from localising the classical isotropy separation sequence and is completely analogous to the one in \cite{P16}. We do not repeat the arguments here.

\begin{proposition} \label{prop: isosep} There is a short exact sequence 
\[
\xymatrix{ 0 \ar[d] \\ [L_{v_1} G/H_+ \wedge M,  L_{v_1} G \ltimes_H {E\PP(H)}_{+}]^{G,v_1}_* \ar[d]_{\proj_*} \\ [L_{v_1} G/H_+ \wedge M,  L_{v_1} G/H_+ ]^{G,v_1}_* \ar[d]_{\Phi^H} \\ [L_{v_1} W_G(H)_+ \wedge M, L_{v_1} W_G(H)_+]^{W_G(H), v_1}_* \ar[d] \\ 0. } 
 \]
\end{proposition}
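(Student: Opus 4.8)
The plan is to derive this short exact sequence by applying the localised functor $[L_{v_1}G/H_+ \wedge M, -]^{G,v_1}_*$ to the isotropy separation cofibre sequence and identifying the outer terms. Recall the classical isotropy separation sequence: for the family $\PP(H)$ of proper subgroups of $H$ (or rather the appropriate family of proper subconjugates in $G$), there is a cofibre sequence in $\Ho(\SpG)$
\[
G \ltimes_H {E\PP(H)}_+ \xrightarrow{\proj} G/H_+ \longrightarrow G \ltimes_H \widetilde{E\PP(H)},
\]
where $\widetilde{E\PP(H)}$ is the cofibre of $E\PP(H)_+ \to S^0$. Localising, and using that $L_{v_1}$ is smashing (Proposition \ref{generators}), we obtain the corresponding cofibre sequence in $\Ho(\LSpG)$. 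Smashing with the compact object $L_{v_1}G/H_+ \wedge M$ and applying $[-,-]^{G,v_1}_*$ yields a long exact sequence. The claim is that this long exact sequence breaks up into the short exact sequences of the statement; equivalently, that the connecting maps vanish, which is the same as saying the maps $\proj_*$ are injective.

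**Identifying the outer terms.** The first step is to compute the third term. The geometric fixed points functor $\Phi^H$ kills everything built from proper subconjugates of $H$, and on $G \ltimes_H \widetilde{E\PP(H)}$ one has, by the standard analysis (tom Dieck splitting, or directly), that smashing $G/H_+$ with $\widetilde{E\PP(H)}$ and taking $H$-geometric fixed points recovers the free $W_G(H)$-spectrum $\Sigma^\infty W_G(H)_+$; more precisely $\Phi^H$ induces an isomorphism
\[
[L_{v_1}G/H_+ \wedge M, L_{v_1}(G \ltimes_H \widetilde{E\PP(H)})]^{G,v_1}_* \xrightarrow{\;\cong\;} [L_{v_1}W_G(H)_+ \wedge M, L_{v_1}W_G(H)_+]^{W_G(H),v_1}_*,
\]
exactly as in the non-local case treated in \cite[Proposition 6.3.2]{P16}, using that $\Phi^H$ is compatible with $v_1$-localisation by Corollary \ref{geofixedpt}. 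This identifies the bottom nonzero term. The middle and top terms are then as written.

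**Exactness: the main point.** The heart of the argument is showing the connecting homomorphism
\[
[L_{v_1}G/H_+ \wedge M, L_{v_1}(G \ltimes_H \widetilde{E\PP(H)})]^{G,v_1}_* \longrightarrow [L_{v_1}G/H_+ \wedge M, L_{v_1}(G\ltimes_H E\PP(H)_+)]^{G,v_1}_{*-1}
\]
is zero, equivalently that $\proj_*$ is surjective onto the middle term modulo the image from the free part — but the cleanest formulation, following \cite{P16}, is to exhibit a splitting. One uses that after geometric fixed points the sequence splits because the map $S^0 \to \widetilde{E\PP(H)}$ admits a retraction after smashing appropriately, or one invokes the tom Dieck splitting directly: $L_{v_1}G/H_+$ itself, after applying $(-)^H$ and $L_1$, splits as a wedge indexed by conjugacy classes of subgroups of $H$, and the summand corresponding to $H$ itself is precisely the free $W_G(H)$-piece. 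This splitting of the identity at the level of the relevant mapping groups forces the long exact sequence to degenerate into the stated short exact sequence. Since the paper says this is "completely analogous" to \cite[Proposition 6.3.2]{P16}, the plan is to simply transport that proof, the only new ingredients being the compatibility statements from Section \ref{sec:localcat} (Corollaries \ref{geofixedpt} and \ref{fixedpt}, and the smashing property), which ensure every step survives $v_1$-localisation.

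**Main obstacle.** The only genuine subtlety is the identification of the $\Phi^H$-term and the verification that $\Phi^H$ induces an isomorphism rather than merely a map — i.e. that no homotopical information is lost between $L_{v_1}(G\ltimes_H\widetilde{E\PP(H)})$ and its geometric fixed points in this range of mapping groups. In the non-localised setting this is \cite[Proposition 6.3.2]{P16}; here one must check that the comparison is compatible with the localisation, which follows formally from the fact that $\Phi^H$ commutes with $L_{v_1}$ up to natural isomorphism (Corollary \ref{geofixedpt}) and that $L_{v_1}$ is smashing so preserves the relevant wedge decompositions. Everything else is a routine translation of \cite{P16}.
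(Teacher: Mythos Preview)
Your proposal is correct and takes essentially the same approach as the paper, which simply states that the proof follows by localising the classical isotropy separation sequence and is completely analogous to \cite[Proposition 6.3.2]{P16} without repeating the arguments. Your outline is in fact more detailed than what the paper provides, and correctly identifies the needed inputs from Section~\ref{sec:localcat} (smashing localisation, Corollaries~\ref{geofixedpt} and~\ref{fixedpt}) that make the translation from \cite{P16} go through.
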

\qed

Now we obtain the last ingredients of our main proof analogously to \cite{P16}. We follow the strategy of \cite[Subsection 7.3]{P16}. Recall that what we still need to show is that
\[ \FFF: [L_{v_1}\SigmaGH \wedge M, L_{v_1}\SigmaGH]^{G,v_1}_* \longrightarrow [\FFF(L_{v_1}\SigmaGH \wedge M), \FFF(L_{v_1}\SigmaGH)]^{G,v_1}_*\] 
is an isomorphism for any subgroup $H$ of $G$. We perform an induction based on the size of $H$. The case $H=\{1\}$ was done in Section \ref{sec:calculation}. Consider the following composite

\begin{multline}
\hat{\FFF}: \Ho( L_{v_1} \Sp_{W_G(H)}) \xrightarrow{\varepsilon^*} \Ho(L_{v_1} \Sp_{N_G(H)}) \xrightarrow{G \ltimes_{N_G(H)} -}  \Ho( L_{v_1}\Sp_G) \xrightarrow{} \nonumber\\ \xrightarrow{\FFF} \Ho(\LSpG)  \xrightarrow{\Res_{N_G(H)}^G} \Ho(L_{v_1}\Sp_{N_G(H)}) \xrightarrow{\Phi^H}  \Ho(L_{v_1}\Sp_{W_G(H)}).  
\end{multline}

This $\hat{\FFF}$ satisfies the conditions of Section \ref{sec:calculation} when restricted to free $W_G(H)$-spectra. Hence the map (we omit the $\Sigma^\infty$ symbol)
\[\hat{\FFF}: [L_{v_1} W_G(H)_+ \wedge M, L_{v_1} W_G(H)_+]^{W_G(H), v_1}_* \rightarrow  [\hat{\FFF}(L_{v_1} W_G(H)_+ \wedge M), \hat{\FFF}(L_{v_1} W_G(H)_+)]^{W_G(H), v_1}_* \] is an isomorphism. Finally, we look at the following important commutative diagram

\begin{tiny}\[\hspace{-2.cm}\xymatrix{[L_{v_1}G/H_+ \wedge M, L_{v_1} G \ltimes_H E\PP(H)_+]_*^{G, v_1} \ar[r]^-{\proj_*} \ar[d]^-\FFF_{\cong} & [L_{v_1} G/H_+ \wedge M,  L_{v_1} G/H_+ ]^{G,v_1}_* \ar[d]^\FFF & [L_{v_1} W_G(H)_+ \wedge M, L_{v_1} W_G(H)_+]^{W_G(H), v_1}_* \ar[l]_-{G\ltimes_{N_G(H)} \varepsilon^*} \ar[d]^-{\hat{\FFF}}_-{\cong}\\  [\FFF(L_{v_1}G/H_+ \wedge M), \FFF(L_{v_1} G \ltimes_H E\PP(H)_+)]_*^{G, v_1} \ar[r]^-{\FFF(\proj)_*} \ar[d]_-{\cong} & [\FFF(L_{v_1}G/H_+ \wedge M), \FFF(L_{v_1}G/H_+)]^{G,v_1}_* \ar[r]^-{\Phi^H} \ar[d]^-\cong & [\hat{\FFF}(L_{v_1} W_G(H)_+ \wedge M), \hat{\FFF}(L_{v_1} W_G(H)_+)]^{W_G(H), v_1}_* \ar[d]^-\cong \\ [L_{v_1}G/H_+ \wedge M, L_{v_1} G \ltimes_H E\PP(H)_+]_*^{G, v_1} \ar[r]^-{\proj_*} & [L_{v_1} G/H_+ \wedge M,  L_{v_1} G/H_+ ]^{G,v_1}_* \ar[r]^-{\Phi^H} & [L_{v_1} W_G(H)_+ \wedge M, L_{v_1} W_G(H)_+]^{W_G(H), v_1}_*.} \]\end{tiny}

Corollary \ref{lem: F(GEPH)} implies that the lower left square commutes and that the lower left vertical map is an isomorphism. Other squares commute by definition. Further, according to Proposition \ref{prop: isosep}, the lower row in this diagram is a short exact sequence and hence so is the middle one. We also note that the upper left vertical map is an isomorphism by the induction assumption and the results of Section \ref{Reduction}. Now a diagram chase as in Subsection 7.3 of \cite{P16} shows that the map
\[ \FFF: [L_{v_1}\SigmaGH \wedge M, L_{v_1}\SigmaGH]^{G,v_1}_* \longrightarrow [\FFF(L_{v_1}\SigmaGH \wedge M), \FFF(L_{v_1}\SigmaGH)]^{G,v_1}_*\] 
is surjective. Since the source and target of this map are finite of the same cardinality, this completes the proof that the map $\FFF$ above is an isomorphism. By the results of Subsections \ref{mod2} and \ref{Reduction}, this implies that
\[ \FFF: [L_{v_1}\SigmaGH, L_{v_1}\SigmaGK]^{G,v_1}_* \longrightarrow [\FFF(L_{v_1}\SigmaGH), \FFF(L_{v_1}\SigmaGK)]^{G,v_1}_*\]
is also an isomorphism for any $H,K \leq G$. Thus
\[
\FFF: \Ho(\LSpG) \xrightarrow{- \wedge^{\mathbf{L}} X} \Ho(\C) \xrightarrow{\Psi^{-1}}(\LSpG)
\]
is an equivalence of categories, which proves our main rigidity theorem.

\section{Exotic models at odd primes}\label{sec:exotic}

Now let $p$ be an odd prime and $G$ a finite group such that $p$ does not divide the order of $G$. In this section we show that under these assumptions the triangulated category $\Ho(\LSpG)$
has an exotic model. This exotic model is an equivariant generalisation of Franke's model \cite{Fra96} and, like Franke's model, relies also on the ideas of Bousfield \cite{Bou85}.
Briefly, Franke's theorem shows the following: given a cohomological functor from a triangulated category (such as the homotopy category of a stable model category) into an abelian category, then under certain conditions this triangulated category can be realised as 
the derived category of twisted chain complexes. An important part of the assumptions is a relation between a ``split'' of the abelian category into smaller categories and the global injective dimension of this abelian category. 

\bigskip
We do not recall the conditions here and refer the reader to the original paper \cite{Fra96} for details. For more streamlined expositions see \cite{Roi08} or \cite{Pat16}. We also note that the proof in \cite{Fra96} contains a gap which is fixed for height one in \cite[Section 4]{Pat16}.

\subsection{The algebraic model} 
We will describe the abelian category that we are dealing with and study its global injective dimension.
The spectrum $E(1)$ defines a homology theory
$$E(1)_*(-): \Ho(L_1\Sp) \rightarrow E(1)_*E(1)\mbox{-}\Comod. $$
Here $E(1)_*E(1)\mbox{-}\Comod$ denotes the category of comodules over the flat Adams Hopf algebroid 
$$(E(1)_*,E(1)_*E(1)).$$ 
This category is an abelian category with enough injectives. Furthermore, it has global cohomological dimension equal to two \cite{Bou85}. 

In the equivariant context we need to consider \emph{Mackey functors} in comodules. Let $\mathcal{B}(G)$ denote the \emph{Burnside category} of $G$, see e.g. \cite[Section V.9]{LewMaySte86}. The category  $\mathcal{B}(G)$ is a (pre)additive category. The objects of $\mathcal{B}(G)$ are the cosets of the form $G/H$ for any subgroup $H \leq G$. Morphisms are additively generated by the restrictions, conjugations and transfers. 

A contravariant additive functor from $\mathcal{B}(G)$ to the category of abelian groups is called a Mackey functor. In this subsection we consider the category of contravariant additive functors from $\mathcal{B}(G)$ to the category of $E(1)_*E(1)$-comodules, i.e. Mackey functors in comodules. We will denote this category by
$$E(1)_*E(1)[\mathcal{M}(G)]\mbox{-}\Comod$$
and refer to its objects as \emph{comodule Mackey functors}.
Since equivariant stable homotopy groups carry the structure of a Mackey functor \cite[Section V.9]{LewMaySte86}, it follows that we have a homology theory
$$E(1)_*(-): \Ho(\LSpG) \rightarrow E(1)_*E(1)[\mathcal{M}(G)]\mbox{-}\Comod$$
which sends an object $X$ from $\Ho(\LSpG)$ to the comodule Mackey functor 
$$E(1)_*(X^{(-)}) : G/H \mapsto E(1)_*(X^H).$$
We now point out certain algebraic properties of the category of comodule Mackey functors. 

Let $M$ be a $p$-local $G$-Mackey functor (recall we assume that $p$ does not divide the order of $G$). For any $H \leq G$, let $\tau_H M$ denote the quotient of $M(H)$ by the subgroup generated by the images of transfers from proper subgroups of $H$. Then the action of the Weyl group $W_G(H)$ on $M(H)$ descends to the action of $W_G(H)$ on $\tau_H M$. Altogether we obtain a functor
\[\tau: \Fun(\mathcal{B}(G)^{op}, \mathbb{Z}_{(p)}\mbox{-}\Mod) \to  \prod_{(H) \leq G} \mathbb{Z}_{(p)}[W_G(H)]\mbox{-}\Mod,\]
sending a $p$-local Mackey functor $M$ to the collection $(\tau_H M)_{(H) \leq G}$. Here the notation $\Fun(\mathcal{B}(G)^{op}, \mathbb{Z}_{(p)}\mbox{-}\Mod)$ stands for the category of $p$-local Mackey functors.

The following fact is well-known and a proof can be found in \cite[Proposition III.4.20 and Theorem III.4.24]{Sch15} (see also \cite[Corollary 4.4]{Th88} and \cite[Appendix A]{GreMay95}).

\begin{proposition} \label{Mackeysplitting} Let $G$ be a finite group and $p$ a prime which does not divide the order of $G$. Then the functor
\[\tau: \Fun(\mathcal{B}(G)^{op}, \mathbb{Z}_{(p)}\mbox{-}\Mod) \to  \prod_{(H) \leq G} \mathbb{Z}_{(p)}[W_G(H)]\mbox{-}\Mod\]
is an equivalence of categories. \qed
\end{proposition}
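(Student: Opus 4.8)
The plan is to prove Proposition \ref{Mackeysplitting} by exhibiting an explicit inverse to $\tau$ and then verifying that the two composites are naturally isomorphic to the respective identities. The fundamental mechanism is idempotent splitting: when $p \nmid |G|$, the rational-style representation theory of the Burnside (Mackey) category becomes semisimple in the relevant sense, and one can use the idempotents in the rationalised Burnside ring $A(G)_{(p)}$ (or more precisely the primitive idempotents $e_H$ indexed by conjugacy classes of subgroups, coming from the mark homomorphism being a split injection after inverting $|G|$) to decompose every Mackey functor. Concretely, I would first recall that since $p \nmid |G|$, the Burnside ring $A(H)_{(p)}$ of every subquotient is a product of copies of $\Zp$ indexed by conjugacy classes of subgroups, so that the full subcategory of $\Fun(\mathcal{B}(G)^{op},\Zp\text{-}\Mod)$ is equivalent to modules over a semisimple-up-to-the-group-algebras version of the Burnside category.

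First I would construct the inverse functor $\sigma$. Given a tuple $(V_H)_{(H)\le G}$ with $V_H$ a $\Zp[W_G(H)]$-module, define a Mackey functor $\sigma(V)$ by the ``induced'' formula: on object $K$, set $\sigma(V)(K) = \bigoplus_{(H)\le G} \big(\text{something built from } V_H \text{ via the } (K,H)\text{-part of the Burnside category}\big)$, i.e. essentially $\bigoplus_{(H)} \Zp[(G/H)^K] \otimes_{\Zp[W_G(H)]} V_H$ in the appropriate sense, with restrictions, transfers and conjugations acting through the Burnside-category structure maps. This is precisely the left Kan extension (equivalently, because of the self-duality of $\mathcal{B}(G)$, also the right Kan extension) along the inclusion of the ``diagonal'' pieces. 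One then checks that $\tau \circ \sigma \cong \mathrm{id}$ directly: applying $\tau_H$ kills all the transfer images, and the surviving term is $V_H$ with its $W_G(H)$-action. The other composite $\sigma \circ \tau \cong \mathrm{id}$ is where the hypothesis $p \nmid |G|$ is essential — this is the statement that every $p$-local Mackey functor is the direct sum of its ``geometric pieces'', which follows from the idempotent decomposition of the identity in $\mathrm{End}_{\mathcal{B}(G)_{(p)}}(G/K)$ (or from the fact that $\mathcal{B}(G)_{(p)}$ has finite global dimension zero in the $\Zp[W_G(H)]$-stratified sense). I would cite \cite[Proposition III.4.20 and Theorem III.4.24]{Sch15} (and the alternative references \cite[Corollary 4.4]{Th88}, \cite[Appendix A]{GreMay95}) for the detailed bookkeeping of these idempotents rather than reproduce it.

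Then, to deduce the comodule version stated in the proposition, I would observe that $E(1)_*E(1)$-comodules form an abelian category, and that both the functor $\tau$ and its inverse $\sigma$ are defined by finite (co)limits — finite direct sums and quotients by images of finitely many maps — all of which are $\Zp$-linear and hence preserved by the forgetful functor to $\Zp$-modules and lifted to comodules (the forgetful functor $E(1)_*E(1)\text{-}\Comod \to E(1)_*\text{-}\Mod$ creates colimits, and all our constructions are built from the $\mathcal{B}(G)$-structure maps which are just $\Zp$-linear combinations). In other words, the equivalence is ``base-changed'' objectwise: a comodule Mackey functor is the same as a Mackey functor valued in comodules, and applying the split $\tau$ levelwise in the comodule variable produces the equivalence $E(1)_*E(1)[\mathcal{M}(G)]\text{-}\Comod \simeq \prod_{(H)} E(1)_*E(1)[W_G(H)]\text{-}\Comod$, where the right-hand factors are $W_G(H)$-equivariant comodules. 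Since the statement in the excerpt only asserts the $\Zp$-module version, this last paragraph is really a remark on how the result will be used; the core of the proof is the two-sided inverse construction plus the idempotent argument.

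The main obstacle I expect is \emph{not} conceptual but rather the careful verification that the candidate inverse $\sigma$ actually lands in Mackey functors — i.e. that the assembled collection of restriction, transfer and conjugation maps satisfies the double-coset (Mackey) relations — and that $\sigma\circ\tau$ recovers the original Mackey functor on the nose, which requires tracking the primitive idempotents $e_H \in A(G)_{(p)}$ and their behaviour under restriction and transfer. This is exactly the computation carried out in \cite{Sch15}, so in the write-up I would state the proposition, give the construction of $\sigma$ and the easy direction $\tau\sigma\cong\mathrm{id}$ in a sentence or two, and then defer the delicate direction to the cited references with a brief indication that it rests on the idempotent splitting of $A(G)_{(p)}$ available precisely because $p\nmid|G|$.
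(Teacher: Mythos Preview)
Your proposal is correct and in fact more detailed than what the paper provides: the paper does not prove this proposition at all but simply states it as well-known and cites \cite[Proposition III.4.20 and Theorem III.4.24]{Sch15}, \cite[Corollary 4.4]{Th88}, and \cite[Appendix A]{GreMay95} for the proof. Your outline via idempotent splitting of the $p$-local Burnside ring and construction of an explicit inverse is precisely the standard argument found in those references, so there is nothing to correct.
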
 

We get immediate corollaries:
\begin{corollary}
If $p$ does not divide the order of $G$, then 
$$E(1)_*E(1)[\mathcal{M}(G)]\mbox{-}\Comod \sim \prod_{(H) \leq G} E(1)_*E(1)[W_G(H)]\mbox{-}\Comod.$$
Here $E(1)_*E(1)[W_G(H)]\mbox{-}\Comod$ denotes the category of $E(1)_*E(1)$-comodules with $W_G(H)$-action.  \qed
\end{corollary}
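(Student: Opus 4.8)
The plan is to obtain the corollary by applying the previous Proposition~\ref{Mackeysplitting} ``coefficientwise'', i.e. with the ground category $\mathbb{Z}_{(p)}\mbox{-}\Mod$ replaced by $E(1)_*E(1)\mbox{-}\Comod$. First I would observe that $E(1)_*E(1)\mbox{-}\Comod$ is a $\mathbb{Z}_{(p)}$-linear abelian category: indeed $E(1)_* = \mathbb{Z}_{(p)}[v_1,v_1^{-1}]$ is a $\mathbb{Z}_{(p)}$-algebra, so every $E(1)_*E(1)$-comodule is in particular a $\mathbb{Z}_{(p)}$-module and all comodule maps are $\mathbb{Z}_{(p)}$-linear. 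Hence the category of comodule Mackey functors
\[
E(1)_*E(1)[\mathcal{M}(G)]\mbox{-}\Comod = \Fun(\mathcal{B}(G)^{op}, E(1)_*E(1)\mbox{-}\Comod)
\]
is a category of additive (contravariant) functors from $\mathcal{B}(G)$ to a $\mathbb{Z}_{(p)}$-linear category, exactly as in Proposition~\ref{Mackeysplitting} but with a different target.

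Next I would check that the construction of the functor $\tau$ goes through verbatim in this more general setting: for a comodule Mackey functor $M$ and a subgroup $H$, the transfer maps $\tr^H_K\colon M(K)\to M(H)$ for proper $K<H$ are morphisms of $E(1)_*E(1)$-comodules, so their images generate a subcomodule of $M(H)$; the quotient $\tau_H M$ is again an $E(1)_*E(1)$-comodule, and the residual $W_G(H)$-action (coming from conjugations, which are comodule maps) is by comodule automorphisms. This produces a functor
\[
\tau\colon E(1)_*E(1)[\mathcal{M}(G)]\mbox{-}\Comod \to \prod_{(H)\leq G} E(1)_*E(1)[W_G(H)]\mbox{-}\Comod,
\]
and I would argue it is an equivalence. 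The cleanest way is to note that Proposition~\ref{Mackeysplitting}, applied to the symmetric monoidal abelian category $E(1)_*\mbox{-}\Mod$ (or simply by $E(1)_*$-linearity of everything involved) shows that the idempotents in the rational Burnside ring $\mathbb{Z}_{(p)}\otimes A(G)$ that effect the splitting act on $\mathcal{B}(G)$-functors valued in \emph{any} $\mathbb{Z}_{(p)}$-linear abelian category; since $E(1)_*E(1)\mbox{-}\Comod$ is such a category, the splitting and its inverse (induction from the factors) are defined there and are mutually inverse for exactly the same formal reasons — the proof in \cite[Proposition III.4.20, Theorem III.4.24]{Sch15} only uses additivity and $\mathbb{Z}_{(p)}$-linearity of the target.

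Alternatively, and perhaps more transparently, I would invoke the general principle that for a functor category $\Fun(\mathcal{B}(G)^{op}, \mathcal{A})$ an equivalence established for $\mathcal{A} = \mathbb{Z}_{(p)}\mbox{-}\Mod$ in a way that is natural in $\mathcal{A}$ (through $\mathbb{Z}_{(p)}$-linear functors) automatically transports to any cocomplete $\mathbb{Z}_{(p)}$-linear abelian $\mathcal{A}$, because both sides commute with the relevant colimits and the splitting is induced by an idempotent decomposition of the identity endofunctor. Since $E(1)_*E(1)\mbox{-}\Comod$ is cocomplete and $\mathbb{Z}_{(p)}$-linear, this gives the claim, and unwinding the factor indexed by a conjugacy class $(H)$ identifies it with $E(1)_*E(1)$-comodules equipped with a $W_G(H)$-action, i.e. $E(1)_*E(1)[W_G(H)]\mbox{-}\Comod$. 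The main obstacle, and the only point requiring genuine care, is verifying that the splitting idempotents are compatible with the comodule structure — i.e. that one may really apply the Burnside-ring idempotents inside $E(1)_*E(1)\mbox{-}\Comod$; this is where one uses that these idempotents live in $\mathbb{Z}_{(p)}$ (as $p\nmid|G|$) and that $E(1)_*E(1)$-comodules form a $\mathbb{Z}_{(p)}$-linear category, so multiplication by such an idempotent is automatically a comodule endomorphism.
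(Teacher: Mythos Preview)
Your proposal is correct and follows the same approach the paper intends: the corollary is stated with a \qed immediately after, so the authors regard it as a direct consequence of Proposition~\ref{Mackeysplitting} applied with the target $\mathbb{Z}_{(p)}\mbox{-}\Mod$ replaced by the $\mathbb{Z}_{(p)}$-linear category $E(1)_*E(1)\mbox{-}\Comod$. You have simply spelled out the (routine) details of why that replacement is legitimate, which the paper leaves implicit.
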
 

\begin{corollary} \label{dimcoro} Let $p$ be an odd prime and $G$ a finite group such that $p$ does not divide the order of $G$. Then the global cohomological dimension of the abelian category $E(1)_*E(1)[\mathcal{M}(G)]\mbox{-}\Comod$ is equal to two

\end{corollary}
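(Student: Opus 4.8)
The plan is to deduce Corollary \ref{dimcoro} from the previous corollary, which splits the category of comodule Mackey functors as a finite product of categories of $E(1)_*E(1)$-comodules equipped with a $W_G(H)$-action. The key point is that the global cohomological dimension of a finite product of abelian categories is the maximum of the global cohomological dimensions of the factors, so it suffices to show that each factor $E(1)_*E(1)[W_G(H)]\mbox{-}\Comod$ has global cohomological dimension exactly two.

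First I would recall that by \cite{Bou85} the category $E(1)_*E(1)\mbox{-}\Comod$ has global cohomological dimension two, and that this is the case $W=1$. For a general finite group $W$ with $p \nmid |W|$, I would argue that adding a $W$-action does not change the global dimension. The upper bound: since $p \nmid |W|$, the group algebra $\mathbb{Z}_{(p)}[W]$ is semisimple (Maschke), so taking $W$-fixed points (or, equivalently, the functor $M \mapsto \mathbb{Z}_{(p)} \otimes_{\mathbb{Z}_{(p)}[W]} M$ and its variants) is exact; more concretely, for comodules with $W$-action one has, for any two such comodules $A$ and $B$, a natural isomorphism
\[
\Ext^n_{E(1)_*E(1)[W]\mbox{-}\Comod}(A,B) \cong \Ext^n_{E(1)_*E(1)\mbox{-}\Comod}(A,B)^W,
\]
obtained by averaging over $W$ (the order of $W$ is invertible), which immediately gives the bound $\le 2$. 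For the lower bound, one restricts a non-vanishing $\Ext^2$ computation in $E(1)_*E(1)\mbox{-}\Comod$: take comodules $A, B$ with $\Ext^2(A,B) \neq 0$ and regard them as comodules with trivial $W$-action; then by the displayed isomorphism $\Ext^2_{E(1)_*E(1)[W]\mbox{-}\Comod}(A,B) \cong \Ext^2_{E(1)_*E(1)\mbox{-}\Comod}(A,B)^W = \Ext^2_{E(1)_*E(1)\mbox{-}\Comod}(A,B) \neq 0$, so the dimension is at least two.

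Finally I would assemble the pieces: combining the splitting
\[
E(1)_*E(1)[\mathcal{M}(G)]\mbox{-}\Comod \sim \prod_{(H) \leq G} E(1)_*E(1)[W_G(H)]\mbox{-}\Comod
\]
with the fact that each factor has global cohomological dimension two (in particular the factor for $H = G$, where $W_G(G)$ is trivial, recovers $E(1)_*E(1)\mbox{-}\Comod$ itself) and with the product formula for global dimension, we conclude that $E(1)_*E(1)[\mathcal{M}(G)]\mbox{-}\Comod$ has global cohomological dimension equal to two.

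I expect the main obstacle to be the careful justification of the averaging/transfer argument identifying $\Ext$ in the category of comodules-with-$W$-action with the $W$-invariants of $\Ext$ in the category of plain comodules — one must check that the category $E(1)_*E(1)[W]\mbox{-}\Comod$ has enough injectives and that an injective comodule, made into a $W$-object by coinduction from the trivial subgroup, yields injective resolutions computing these $\Ext$ groups, so that the norm map argument genuinely applies. Everything else (the product formula for global dimension, Maschke's theorem, and quoting Bousfield's computation) is routine.
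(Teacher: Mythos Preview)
Your proposal is correct and follows essentially the same route as the paper: reduce via the splitting to the factors $E(1)_*E(1)[W_G(H)]\mbox{-}\Comod$, quote Bousfield for the base case, and use that $|W_G(H)|$ is invertible to conclude the $W$-equivariant category has the same cohomological dimension. The only difference is cosmetic: where you spell out the averaging/Maschke argument for the upper bound and give an explicit lower bound, the paper simply invokes \cite[Proposition 3.4]{Mit68}, which packages exactly this ``invertible group order implies same dimension'' statement.
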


\begin{proof} By the previous corollary it suffices to show that the cohomological dimension of $E(1)_*E(1)[W_G(H)]\mbox{-}\Comod$ is equal to two. Bousfield's result \cite{Bou85} tells us that the cohomological dimension of $E(1)_*E(1)\mbox{-}\Comod$ is two. Since the order of the group $W_G(H)$ is invertible in $E(1)_*E(1)\mbox{-}\Comod$, a proposition of Mitchell \cite[Proposition 3.4]{Mit68} completes the proof. \end{proof}

\subsection{Exotic equivalence}

In this subsection we apply Franke's theorem to construct an algebraic model for $\LSpG$ for $p$ odd and finite group $G$ such that $p$ does not divide the order of $G$. We will check that the conditions of Franke's theorem \cite{Fra96} are satisfied. Franke's theorem makes use of ``Eilenberg-MacLane objects'' for injective objects in order to set up an Adams spectral sequence, and we are going to construct those here.

\begin{lemma} \label{inj} Let $J$ be an injective $E(1)_*E(1)$-comodule. Then for any finite group $K$, the object $\mathbb{Z}[K] \otimes J$ is injective in $E(1)_*E(1)[K]\mbox{-}\Comod$.
\end{lemma}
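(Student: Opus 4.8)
The plan is to show that $\mathbb{Z}[K]\otimes J$ is injective by exhibiting $\mathbb{Z}[K]\otimes(-)$ as a right adjoint to a forgetful-type functor and observing that it is also exact, so it preserves injectives. More precisely, first I would identify the category $E(1)_*E(1)[K]\mbox{-}\Comod$ of $E(1)_*E(1)$-comodules with a $K$-action with the category of $K$-objects in $E(1)_*E(1)\mbox{-}\Comod$, equivalently functors from the one-object groupoid $BK$ to $E(1)_*E(1)\mbox{-}\Comod$. There is a forgetful functor $U\colon E(1)_*E(1)[K]\mbox{-}\Comod \to E(1)_*E(1)\mbox{-}\Comod$ which forgets the $K$-action, and its right adjoint is the coinduction functor $\mathrm{coind}_K(-)=\Map_{\mathbb{Z}}(\mathbb{Z}[K],-)$. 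Since $K$ is finite, $\mathbb{Z}[K]$ is a finitely generated free $\mathbb{Z}$-module and $\Map_{\mathbb{Z}}(\mathbb{Z}[K],-)\cong \mathbb{Z}[K]\otimes(-)$ naturally, so the right adjoint to $U$ is precisely $\mathbb{Z}[K]\otimes(-)$ with the evident $K$-action (regular action on the $\mathbb{Z}[K]$ factor).

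Next I would verify the two formal facts that make the argument go through: that $U$ is exact, and that $U$ preserves all colimits (in particular it is a left adjoint so that $\mathbb{Z}[K]\otimes(-)$ genuinely is a right adjoint). Exactness of $U$ is immediate because (co)kernels and images in $E(1)_*E(1)[K]\mbox{-}\Comod$ are computed on underlying comodules; the $K$-action is just extra structure preserved by the maps. A right adjoint to an exact functor automatically sends injective objects to injective objects, by the standard argument: if $J$ is injective in $E(1)_*E(1)\mbox{-}\Comod$ then $\Hom_{[K]}(-,\mathbb{Z}[K]\otimes J)\cong \Hom(U(-),J)$ is the composite of the exact functor $U$ with the exact functor $\Hom(-,J)$, hence exact, which says exactly that $\mathbb{Z}[K]\otimes J$ is injective. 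Since by hypothesis $J$ is an injective $E(1)_*E(1)$-comodule, this gives the claim.

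I do not expect a serious obstacle here; the only point requiring a little care is the natural isomorphism $\Map_{\mathbb{Z}}(\mathbb{Z}[K],M)\cong \mathbb{Z}[K]\otimes M$ as objects of $E(1)_*E(1)[K]\mbox{-}\Comod$ — one must check that the $E(1)_*E(1)$-comodule structure on the tensor product (built from the comodule structure on $M$ alone, with $\mathbb{Z}[K]$ regarded as an untwisted $E(1)_*$-module placed in degree zero) and the $K$-action (coming from the regular representation) are compatible with the coaction, i.e. that $\mathbb{Z}[K]\otimes M$ really is an object of the stated category and that the adjunction isomorphism is one of comodules with $K$-action. This is a routine verification once one remembers that $\mathbb{Z}[K]$ carries the trivial coaction. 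Alternatively, and perhaps more cleanly, one can invoke Proposition \ref{Mackeysplitting}-style reasoning or simply cite that for a finite group the forgetful/coinduction adjunction between $G$-objects and objects in any abelian category is an adjunction with exact left adjoint; I would phrase the final write-up around the exact-functor-preserves-injectives principle since that is the conceptual heart.
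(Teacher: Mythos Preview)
Your proposal is correct and follows essentially the same approach as the paper: the paper observes that $\mathbb{Z}[K]\otimes(-)$ is right adjoint (and also left adjoint) to the exact forgetful functor $E(1)_*E(1)[K]\mbox{-}\Comod \to E(1)_*E(1)\mbox{-}\Comod$, and concludes immediately that it preserves injectives. Your version supplies more detail (identifying the right adjoint with coinduction and checking compatibility of the coaction), but the argument is the same.
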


\begin{proof} The functor 
$$\mathbb{Z}[K] \otimes - : E(1)_*E(1)\mbox{-}\Comod  \longrightarrow E(1)_*E(1)[K]\mbox{-}\Comod$$
is right adjoint (and also left adjoint) to the forgetful functor 
$$E(1)_*E(1)[K]\mbox{-}\Comod \longrightarrow E(1)_*E(1)\mbox{-}\Comod.$$
Additionally, the forgetful functor is exact. This implies the desired result. \end{proof}

Next, we show that certain injective objects in $E(1)_*E(1)[\mathcal{M}(G)]\mbox{-}\Comod$ are realisable.

\begin{proposition} \label{realizationinj} Let $p$ be an odd prime, $G$ a finite group and suppose that $p$ does not divide the order of $G$. Let $I$ be an injective object in $E(1)_*E(1)[\mathcal{M}(G)]\mbox{-}\Comod$ which under the equivalence of the previous subsection corresponds to 
$$(\mathbb{Z}[W_G(H)] \otimes J)_{(H) \leq G} \in  \prod_{(H) \leq G} E(1)_*E(1)[W_G(H)]\mbox{-}\Comod,$$
where $J$ is an injective $E(1)_*E(1)$-comodule. Then there exists a $v_1$-local $G$-spectrum $\mathcal{I} \in \Ho(\LSpG)$, such that the comodule Mackey functor $E(1)_*(\mathcal{I}^{(-)})$ is isomorphic to $I$.

\end{proposition}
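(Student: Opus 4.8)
The plan is to realise the injective comodule Mackey functor $I$ by explicitly building the corresponding $G$-spectrum from pieces that we already know how to realise equivariantly. Since the equivalence of the previous subsection decomposes the abelian category as a product over conjugacy classes of subgroups, with $I$ corresponding to the tuple $(\mathbb{Z}[W_G(H)] \otimes J)_{(H) \leq G}$, it is natural to realise each summand separately and then assemble them. First I would recall that, non-equivariantly, every injective $E(1)_*E(1)$-comodule $J$ can be realised as $E(1)_*(\mathcal{J})$ for some $K$-local spectrum $\mathcal{J}$; this is standard for the Johnson--Wilson homology theory at height one (injective comodules are retracts of extended/cofree comodules $E(1)_*E(1) \otimes_{E(1)_*} E(1)_* V$, which are realised by $E(1) \wedge V$ and its localisations, together with sums and retracts). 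So to each $(H) \leq G$ we attach a $K$-local spectrum $\mathcal{J}_H$ with $E(1)_*(\mathcal{J}_H) \cong J$, where $J$ is the fixed injective comodule appearing in the statement.

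Next I would produce, for each $(H) \leq G$, a $v_1$-local $G$-spectrum whose comodule Mackey functor is concentrated at the conjugacy class of $H$ and equals $\mathbb{Z}[W_G(H)] \otimes J$ there. The natural candidate is
\[
\mathcal{I}_H := G \ltimes_{N_G(H)} \bigl( \varepsilon^* \mathcal{J}_H \wedge \widetilde{E}\mathcal{P}(H) \wedge EW_G(H)_+ \bigr),
\]
i.e. an $N_G(H)$-spectrum built from the inflation of $\mathcal{J}_H$ along $N_G(H) \to W_G(H)$, smashed with the cofibre of $E\mathcal{P}(H)_+ \to S^0$ to concentrate geometric isotropy at $H$, smashed with a free $W_G(H)$-space to incorporate the group action, and then induced up to $G$. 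Using Corollary~\ref{fixedpt}, Corollary~\ref{geofixedpt}, the $v_1$-local tom Dieck splitting and the double coset formula from Section~\ref{sec:localcat}, one computes $(\mathcal{I}_H)^L$ for each $L \leq G$: the fixed points vanish rationally away from subconjugates of $H$, and at $H$ itself the geometric fixed points recover $L_1(\mathcal{J}_H \wedge EW_G(H)_+)$ with its residual $W_G(H)$-action, whose $E(1)$-homology is $\mathbb{Z}[W_G(H)] \otimes J$ by the group-ring computation $E(1)_*(BW_G(H)_+^{\mathrm{free}}\text{-}\mathrm{model}) $ — here the freeness of the $W_G(H)$-action is exactly what turns Bredon-type homology into the regular representation $\mathbb{Z}[W_G(H)]$ tensored with $J$. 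Then set $\mathcal{I} := \bigvee_{(H) \leq G} \mathcal{I}_H$.

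Finally I would verify that $E(1)_*(\mathcal{I}^{(-)}) \cong I$ as comodule Mackey functors: the homology theory $E(1)_*(-)$ commutes with the finite wedge, and by the computations above the Mackey functor $E(1)_*(\mathcal{I}_H^{(-)})$ matches, under the equivalence $\tau$ of Proposition~\ref{Mackeysplitting}, the tuple that is $\mathbb{Z}[W_G(H)] \otimes J$ in slot $(H)$ and zero elsewhere; summing over $(H)$ gives exactly $(\mathbb{Z}[W_G(H)] \otimes J)_{(H)\leq G}$, which corresponds to $I$. I expect the main obstacle to be the bookkeeping in the fixed-point computation: one must check that the Mackey-functor structure maps (restrictions, conjugations, transfers) on $E(1)_*(\mathcal{I}^{(-)})$ are the correct ones, i.e. that they agree with those prescribed by $\tau^{-1}$ on the tuple $(\mathbb{Z}[W_G(H)]\otimes J)_{(H)}$ — in particular that all transfers out of the "wrong" isotropy vanish on $E(1)$-homology, which is where $p \nmid |G|$ and the resulting idempotent splitting of the Burnside category are used. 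Once the isotropy-separation description of each $\mathcal{I}_H$ is in hand, this amounts to tracking the tom Dieck splitting through $E(1)_*(-)$ and invoking Proposition~\ref{Mackeysplitting} to conclude.
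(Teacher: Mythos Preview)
The overall strategy is right (realise each factor separately and take the wedge), but your specific construction of $\mathcal{I}_H$ contains a genuine error. You claim that the $H$-geometric fixed points of your piece are $L_1(\mathcal{J}_H \wedge EW_G(H)_+)$ and that its $E(1)$-homology, as a $W_G(H)$-module, is $\mathbb{Z}[W_G(H)] \otimes J$; this second claim is false. The space $EW_G(H)$ is nonequivariantly contractible, so $E(1)_*(\mathcal{J}_H \wedge EW_G(H)_+) \cong E(1)_*(\mathcal{J}_H) \cong J$, and each element of $W_G(H)$ acts on $EW_G(H)$ by a self-map that is nonequivariantly homotopic to the identity, hence acts trivially on $E(1)_*$. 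The freeness of the $W_G(H)$-action on $EW_G(H)$ is invisible to nonequivariant homology; it does not produce a free module. To get $\mathbb{Z}[W_G(H)] \otimes J$ you would need the \emph{discrete} free $W_G(H)$-set $W_G(H)_+$, not the contractible $EW_G(H)_+$.

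The paper's construction avoids this pitfall and is considerably simpler: one takes
\[
\mathcal{I} \;=\; \bigvee_{(H) \leq G} G/H_+ \wedge \widetilde{E}\mathcal{P}(H|G) \wedge \varepsilon^*(\mathcal{J}),
\]
where $\varepsilon^*$ is inflation from the trivial group to $G$. There is no need to pass through $N_G(H)$ or to smash with any universal free Weyl-group space: the orbit $G/H_+$ already carries the correct combinatorics, since $\Phi^H(G/H_+) = (G/H)^H_+ = W_G(H)_+$. One then checks directly that $\Phi^H(\mathcal{I}) \simeq W_G(H)_+ \wedge \mathcal{J}$ as $W_G(H)$-spectra (and that the $(H)$-summand has trivial $\Phi^K$ for $(K)\neq(H)$), giving $E(1)_*(\Phi^H(\mathcal{I})) \cong \mathbb{Z}[W_G(H)] \otimes J$. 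Finally, rather than computing genuine fixed points and tracking restrictions, conjugations and transfers by hand, the paper invokes that the composite of $E(1)_*((-)^{(-)})$ with the equivalence $\tau$ of Proposition~\ref{Mackeysplitting} is naturally isomorphic to $(E(1)_*(\Phi^H(-)))_{(H)}$; this identifies $E(1)_*(\mathcal{I}^{(-)})$ with $I$ in one stroke and bypasses the Mackey bookkeeping you flagged as the main obstacle.
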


\begin{proof} It follows from \cite[Proposition 8.2]{Bou85} that there is a $v_1$-local spectrum $\mathcal{J}$ such that $E(1)_*(\mathcal{J}) \cong J$. Recall $\mathcal{P}(H | G)$ denotes the family of subgroups in $G$ which are proper subconjugates of $H$. Further, recall that
$$\varepsilon^* : \Ho(L_1 \Sp) \rightarrow \Ho(\LSpG) $$
denotes the inflation functor associated to the projection $\varepsilon : G \rightarrow 1$. Define
$$\mathcal{I} = \bigvee_{(H) \leq G} G/H_{+} \wedge \widetilde{E}\mathcal{P}(H | G) \wedge \varepsilon^*(\mathcal{J}).$$
By the properties of geometric fixed points (see Section \ref{sec:localcat}), for any subgroup $H$ of $G$, we have an underlying $W_G(H)$-equivariant equivalence of $W_G(H)$-spectra
$$\Phi^H(\mathcal{I}) \simeq  W_G(H)_+ \wedge \mathcal{J}.$$
This implies that for any subgroup $H \leq G$,
$$E(1)_*(\Phi^H(\mathcal{I})) \cong  \mathbb{Z}[W_G(H)] \otimes J $$
inside the category $E(1)_*E(1)[W_G(H)]\mbox{-}\Comod$. Next, let $E(1)_*(\Phi)$ denote the functor sending $X$ to $$(E(1)_*(\Phi^H(X)))_{(H) \leq G} \in \prod_{(H) \leq G} E(1)_*E(1)[W_G(H)]\mbox{-}\Comod.$$ Then by \cite[Proposition III.4.28]{Sch15}, the diagram of categories
$$\xymatrix{\Ho(\LSpG)  \ar[r]^-{E(1)_*(-)} \ar[d]_-{E(1)_*(\Phi)} &   E(1)_*E(1)[\mathcal{M}(G)]\mbox{-}\Comod \ar[dl]^{\sim} \\ \prod_{(H) \leq G} E(1)_*E(1)[W_G(H)]\mbox{-}\Comod, & }$$
commutes up to a natural isomorphism. This commutative diagram together with the latter isomorphism and the definition of $I$ gives an isomorphism of comodule Mackey functors $E(1)_*(\mathcal{I}^{(-)}) \cong I$. \end{proof}

\bigskip
The following proposition implies that there is an Adams spectral sequence for calculating morphisms in $\Ho(\LSpG)$:

\begin{proposition} \label{SScondition} Let $p$ be an odd prime and $G$ a finite group such that $p$ does not divide the order of $G$. Then any comodule Mackey functor $$M \in E(1)_*E(1)[\mathcal{M}(G)]\mbox{-}\Comod$$ embeds into an injective comodule Mackey functor $I$ which satisfies the following properties:

{\rm (i)} There exists $\mathcal{I} \in \Ho(\LSpG)$ such that $E(1)_*(\mathcal{I}^{(-)})$ is isomorphic to $I$.

{\rm (ii)} The map
$$E(1)_*(-): [X, \mathcal{I}]^{G, v_1} \longrightarrow  \Hom_{E(1)_*E(1)[\mathcal{M}(G)]}(E(1)_*(X^{(-)}), E(1)_*(\mathcal{I}^{(-)})) $$
is an isomorphism for any $X$. Here $\Hom_{E(1)_*E(1)[\mathcal{M}(G)]}(-,-)$ denotes the Hom-group of comodule Mackey functors.

\end{proposition}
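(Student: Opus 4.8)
The plan is to reduce the statement to the non-equivariant realisation and Adams spectral sequence input of Bousfield \cite{Bou85}, transported across the splitting of Proposition \ref{Mackeysplitting} and its corollaries. First I would observe that, by Corollary \ref{dimcoro} and the equivalence $E(1)_*E(1)[\mathcal{M}(G)]\mbox{-}\Comod \sim \prod_{(H)\leq G} E(1)_*E(1)[W_G(H)]\mbox{-}\Comod$, any comodule Mackey functor $M$ corresponds to a tuple $(M_{(H)})_{(H)\leq G}$ of $E(1)_*E(1)$-comodules with $W_G(H)$-action. Using Lemma \ref{inj}, I would embed each $M_{(H)}$ into an injective of the form $\mathbb{Z}[W_G(H)]\otimes J_H$: indeed $M_{(H)}$ embeds into some injective $E(1)_*E(1)$-comodule $J_H$ after forgetting the $W_G(H)$-action (the category has enough injectives), and then $M_{(H)}$ embeds $W_G(H)$-equivariantly into $\mathbb{Z}[W_G(H)]\otimes J_H$ via the unit of the (forgetful, coinduction) adjunction, which is a monomorphism since $|W_G(H)|$ is invertible. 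Assembling these gives an injective $I$ with $I_{(H)}\cong \mathbb{Z}[W_G(H)]\otimes J_H$ into which $M$ embeds. Property (i) is then exactly Proposition \ref{realizationinj}, which realises precisely such $I$ by the $G$-spectrum $\mathcal{I}=\bigvee_{(H)\leq G} G/H_+ \wedge \widetilde{E}\mathcal{P}(H|G)\wedge \varepsilon^*(\mathcal{J}_H)$, where $E(1)_*(\mathcal{J}_H)\cong J_H$ by \cite[Proposition 8.2]{Bou85}.

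The remaining work is property (ii), the assertion that $E(1)_*(-)$ computes all maps into $\mathcal{I}$. Since both sides are exact (cohomological) functors of $X$ in the appropriate variance and send coproducts to products, and since $\{L_{v_1}\SigmaGL \mid L\leq G\}$ generates $\Ho(\LSpG)$ (Proposition \ref{generators}), it suffices to check the isomorphism when $X=L_{v_1}\SigmaGL$ for each subgroup $L$. For such $X$, the left-hand side is $[L_{v_1}\SigmaGL,\mathcal{I}]^{G,v_1}\cong \pi_0((\mathcal{I})^L)$ up to the $v_1$-local tom Dieck / geometric fixed point bookkeeping of Section \ref{sec:localcat}. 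On the right-hand side, the generating objects $G/L_+$ represent the ``evaluation at $L$'' functor on comodule Mackey functors, so $\Hom_{E(1)_*E(1)[\mathcal{M}(G)]}(E(1)_*((G/L_+)^{(-)}), I)$ decomposes, via the product splitting, into a sum of $\Hom$-groups of $E(1)_*E(1)$-comodules with group action. The strategy is thus to match these two decompositions term by term: on the topological side, $\Phi^H(\mathcal{I})\simeq W_G(H)_+\wedge \mathcal{J}_H$ and the compatibility of $E(1)_*$ with geometric fixed points reduces everything to the non-equivariant statement that $E(1)_*(-)\colon [W,\mathcal{J}_H]^{L_1\Sp}\to \Hom_{E(1)_*E(1)}(E(1)_*(W),E(1)_*(\mathcal{J}_H))$ is an isomorphism for $W$ a finite wedge of spheres, which holds because $E(1)_*(\mathcal{J}_H)$ is injective (this is part of the input of \cite[Proposition 8.2]{Bou85}, or follows from the $E(1)$-based Adams spectral sequence collapsing onto injectives). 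The group-action bookkeeping is handled because $|G|$ (hence $|W_G(H)|$) is invertible throughout.

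The main obstacle I anticipate is making the identification in the previous paragraph genuinely precise and natural: one must verify that the natural transformation $E(1)_*(-)$ really does correspond, under the tom Dieck splitting on the topological side and the $\tau$-splitting on the algebraic side, to the wedge of non-equivariant maps, i.e. that the splittings are compatible with the map $E(1)_*(-)$ and not merely abstractly isomorphic. This is exactly the content encoded in \cite[Proposition III.4.28]{Sch15} (the commuting square relating $E(1)_*(-)$, $E(1)_*(\Phi)$, and the equivalence of categories), together with the fact, already used in the proof of Proposition \ref{realizationinj}, that $\varepsilon^*$ is monoidal and $\Phi^H\circ\varepsilon^*\cong \mathrm{id}$. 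So the proof of (ii) amounts to: reduce to generators $X=L_{v_1}\SigmaGL$; apply the compatible splittings to turn both sides into products indexed by conjugacy classes of subgroups; and invoke the non-equivariant injectivity statement of \cite{Bou85} on each factor. Once this is set up, checking naturality and that the indexing matches is routine but must be done carefully; I would expect that to be where the real work lies, with everything else following formally from results already established in the paper.
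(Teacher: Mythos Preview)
Your proposal is correct and follows the same overall architecture as the paper: embed $M$ into an injective of the shape $(\mathbb{Z}[W_G(H)]\otimes J)_{(H)}$, realise it via Proposition \ref{realizationinj}, reduce (ii) to the generators $L_{v_1}\SigmaGH$ using Proposition \ref{generators}, and invoke Bousfield's non-equivariant result \cite[Proposition 8.2]{Bou85}. The one noteworthy difference lies in the execution of part (ii). You propose to decompose both the topological and algebraic sides across the product splitting $\prod_{(H)} E(1)_*E(1)[W_G(H)]\mbox{-}\Comod$ (via geometric fixed points on the topological side and the compatibility square of \cite[III.4.28]{Sch15}) and then match factor by factor; you rightly identify this matching as the place where care is required. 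The paper instead takes a more direct route that avoids this bookkeeping altogether: since $p \nmid |G|$ forces $L_1 B\Gamma_+ \simeq L_1 S^0$ for every subquotient $\Gamma$, the comodule Mackey functor $E(1)_*((\SigmaGH)^{(-)})$ is \emph{free}, namely isomorphic to $E(1)_*(S^0)\otimes [-,\SigmaGH]_0^G$. A Yoneda/adjunction argument then collapses the right-hand side to $\Hom_{E(1)_*E(1)}(E(1)_*(S^0), E(1)_*(\mathcal{I}^H))$ and the left-hand side to $[L_1 S^0, \mathcal{I}^H]^{L_1\Sp}$, so the whole question becomes non-equivariant. The paper finishes by computing the genuine fixed points $\mathcal{I}^H$ explicitly (via the double coset formula, the Wirthm\"uller isomorphism, and the geometric fixed points of $\widetilde{E}\mathcal{P}(K|G)$) as a finite wedge of copies of $\mathcal{J}$, to which Bousfield applies directly. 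In short, the freeness observation makes the naturality you were worried about automatic; your approach works but trades one clean reduction for a more elaborate compatibility check.
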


\begin{proof} Note that  by adjunction, every object of $E(1)_*E(1)[W_G(H)]\mbox{-}\Comod$ embeds into an injective object of the form $\mathbb{Z}[W_G(H)] \otimes J$, where $J$ is an injective $E(1)_*E(1)$-comodule. By choosing large enough $J$, we can embed  
the comodule Mackey functor $M$ into an injective $I$ which corresponds to $(\mathbb{Z}[W_G(H)] \otimes J)_{(H) \leq G}$ under the equivalence
$$E(1)_*E(1)[\mathcal{M}(G)]\mbox{-}\Comod \sim \prod_{(H) \leq G} E(1)_*E(1)[W_G(H)]\mbox{-}\Comod.$$
It follows from \cite[Proposition 8.2]{Bou85} that there exists a $v_1$-local spectrum $\mathcal{J}$, such that $E(1)_*(\mathcal{J})$ is isomorphic to $J$ and the map
$$E(1)_*(-): [Z, \mathcal{J}]_*^{L_1\Sp} \longrightarrow \Hom_{E(1)_*E(1)}(E(1)_*(Z), E(1)_*(\mathcal{J}))_* $$
is an isomorphism for any $v_1$-local spectrum $Z$ (here $\Hom(-,-)_*$ stands for graded homomorphisms). As in the previous proposition define
$$\mathcal{I} = \bigvee_{(H) \leq G} G/H_{+} \wedge \widetilde{E}\mathcal{P}(H | G) \wedge \varepsilon^*(\mathcal{J}).$$
The proof of the previous proposition implies Part (i). It remains to prove Part (ii).

We start by observing that by Proposition \ref{generators}, it suffices to show that the map
\[
E(1)_*(-):[L_{v_1}\SigmaGH, \mathcal{I}]^{G, v_1}_* \rightarrow \Hom_{E(1)_*E(1)[\mathcal{M}(G)]}(E(1)_*((\SigmaGH)^{(-)}), E(1)_*(\mathcal{I}^{(-)}))_*
\]
is an isomorphism for any subgroup $H \leq G$. Since $p$ does not divide the order of $G$, it follows that there is an isomorphism of comodule Mackey functors
$$E(1)_*((\SigmaGH)^{(-)}) \cong E(1)_*(S^0) \otimes [-, \SigmaGH]_0^G.$$
This isomorphism is induced by the $H$-equivariant derived unit map
$$S^0 \longrightarrow \Res^G_H(\SigmaGH)$$
and uses that $L_1B\Gamma_+$ is stably equivalent to $L_1S^0$ if $p$ does not divide the order of $\Gamma$. Now by adjunction, one sees that there is a commutative diagram
$$\xymatrix{[L_{v_1}\SigmaGH, \mathcal{I}]^{G, v_1}_* \ar[rr]^-{E(1)_*(-)}  \ar[d]^{\cong}  & &  \Hom_{E(1)_*E(1)[\mathcal{M}(G)]}(E(1)_*((\SigmaGH)^{(-)}), E(1)_*(\mathcal{I}^{(-)}))_* \ar[d]^{\cong}  \\ [L_1S^0, \mathcal{I}^H]^{L_1\Sp}_* \ar[rr]^-{E(1)_*(-)} & &    \Hom_{E(1)_*E(1)}(E(1)_*(S^0), E(1)_*({\mathcal{I}}^H))_*. }$$
Hence in order to complete the proof, it suffices to show that the lower horizontal map is an isomorphism in this diagram. Now properties of geometric fixed points, the double coset formula and the Wirthm\"uller isomorphism imply that the derived fixed points ${\mathcal{I}}^H$ are equivalent to 
$$\bigvee_{(K) \leq G} \\\\ \bigvee_{\substack{[g] \in H \setminus G /K,\\ H \cap {}^gK={}^gK }} \mathcal{J}$$
in $\Ho(L_1 \Sp)$. This completes the proof by definition of $\mathcal{J}$. \end{proof}

Now let $$\mathcal{C}^{([1],1)}(E(1)_*E(1)[\mathcal{M}(G)]\mbox{-}\Comod)$$ denote the model category of twisted $([1],1)$-chain complexes of comodule Mackey functors. We remind the reader that a twisted $([1],1)$-chain complex is a pair $(C, d)$, where $C$ is a comodule Mackey functor and $d : C \rightarrow C[1]$ is a morphism of comodule Mackey functors which is a differential, i.e. $d[1]\circ d=0$. Here $[1]$ denotes the shift of the grading. For the model structure on the category of twisted chain complexes for general abelian categories see \cite{Fra96, BarRoi11a}.

Let 
$$\mathcal{D}^{([1],1)}(E(1)_*E(1)[\mathcal{M}(G)]\mbox{-}\Comod)$$ 
denote the homotopy category of $\mathcal{C}^{([1],1)}(E(1)_*E(1)[\mathcal{M}(G)]\mbox{-}\Comod)$. A standard idempotent splitting argument together with Proposition \ref{SScondition} shows that the conditions of Franke's theorem are satisfied (see Theorem 4.2.4 and Theorem 4.2.5 of \cite{Pat16} for a complete proof). Hence we obtain:

\begin{theorem} \label{Frankeequiv} Let $p$ be an odd prime and $G$ a finite group. Suppose that $p$ does not divide the order of $G$. Then there is an equivalence of categories
$$\mathcal{D}^{([1],1)}(E(1)_*E(1)[\mathcal{M}(G)]\mbox{-}\Comod) \sim \Ho(\LSpG).$$
The equivalence is triangulated if $p \geq 5$.
\end{theorem}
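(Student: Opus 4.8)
The plan is to apply Franke's realisation theorem \cite{Fra96} (in the corrected form of \cite[Section 4]{Pat16}) to the homology theory
\[
E(1)_*(-)\colon \Ho(\LSpG) \longrightarrow E(1)_*E(1)[\mathcal{M}(G)]\mbox{-}\Comod.
\]
To invoke that theorem we must verify two packages of hypotheses: an algebraic one, namely that the target abelian category has a ``splitting'' into pieces of global cohomological dimension at most $2(p-1)$ compatible with the grading shift by $[1]$; and a topological one, namely that there are enough realisable injectives carrying an Adams spectral sequence that converges. The algebraic input is already in hand: Corollary \ref{dimcoro} gives global cohomological dimension $2$, and since $p \ge 3$ we have $2 \le 2(p-1)$, with the periodicity of $E(1)_*$ providing the required $[1]$-periodic splitting exactly as in \cite{Pat16} for the non-equivariant case. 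The topological input is supplied by Proposition \ref{SScondition}: every comodule Mackey functor embeds into a realisable injective $I = E(1)_*(\mathcal{I}^{(-)})$ for which $E(1)_*(-)$ computes $[X,\mathcal{I}]^{G,v_1}$, so the associated Adams spectral sequence based on these injectives exists and its $E_2$-term is the right $\mathrm{Ext}$-group.

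First I would assemble a set of compact generators and the corresponding projective/injective generators on the algebraic side: the objects $L_{v_1}\SigmaGH$ ($H \le G$) generate $\Ho(\LSpG)$ by Proposition \ref{generators}, and one checks using $L_1B\Gamma_+ \simeq L_1 S^0$ for $p \nmid |\Gamma|$ (as in the proof of Proposition \ref{SScondition}) that their $E(1)$-homology comodule Mackey functors are compact projective generators of the target abelian category, after which a standard idempotent-completeness argument pins down the comparison of Karoubi envelopes. Then I would feed the verified hypotheses into Franke's machine, which produces a (reconstruction) equivalence
\[
\mathcal{D}^{([1],1)}(E(1)_*E(1)[\mathcal{M}(G)]\mbox{-}\Comod) \;\sim\; \Ho(\LSpG)
\]
as categories. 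This is the first assertion of the theorem.

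For the final clause — that the equivalence is triangulated when $p \ge 5$ — I would reuse the argument of \cite[Section 4]{Pat16}: the reconstruction functor is exact (commutes with shifts and sends distinguished triangles to distinguished triangles) provided the homological dimension is \emph{strictly} below $2(p-1)$, which forces $2 < 2(p-1)$, i.e. $p > 2$, but in fact the sharper obstruction analysis in \cite{Pat16} needs the stronger bound that rules out $p=3$, giving triangulatedness precisely for $p \ge 5$. Concretely, the obstructions to compatibility with the triangulated structure live in $\mathrm{Ext}$-groups that vanish once the cohomological dimension is small relative to the length of the periodicity, and $p \ge 5$ is exactly the range where this numerical inequality holds.

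The main obstacle I anticipate is not any single hard computation but rather checking that the equivariant refinements do not disturb the structure Franke's theorem requires: one must confirm that the splitting of $E(1)_*E(1)[\mathcal{M}(G)]\mbox{-}\Comod$ from Proposition \ref{Mackeysplitting} (and its corollaries) is compatible with the $[1]$-grading shift and with the Eilenberg--MacLane/Adams formalism simultaneously, i.e. that the realisable injectives constructed in Proposition \ref{realizationinj} via the wedge $\bigvee_{(H)} G/H_+ \wedge \widetilde{E}\mathcal{P}(H|G) \wedge \varepsilon^*(\mathcal{J})$ assemble across gradings into $[1]$-periodic families. Granting that (which follows because $\mathcal{J}$ can be chosen $[1]$-periodically in $L_1\Sp$ by Bousfield \cite{Bou85}, and geometric fixed points and $\varepsilon^*$ are exact and commute with the periodicity), the rest is a direct transcription of \cite[Theorem 4.2.4 and Theorem 4.2.5]{Pat16}, and the degree count $2 < 2(p-1)$ for $p \ge 5$ delivers the triangulated refinement.
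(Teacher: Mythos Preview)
Your proposal is correct and follows essentially the same route as the paper: verify the hypotheses of Franke's theorem using Corollary \ref{dimcoro} for the cohomological dimension and Proposition \ref{SScondition} for the realisable injectives and Adams spectral sequence, then invoke the idempotent-splitting argument and \cite[Theorems 4.2.4 and 4.2.5]{Pat16}. The paper's proof is terser---it simply cites these ingredients and defers to \cite{Pat16}---whereas you spell out more of the numerical bookkeeping and the role of the $[1]$-periodic splitting, but there is no substantive difference in strategy.
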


This theorem provides an algebraic model for $\Ho(\LSpG)$ when $p$ is odd and does not divide the order of $G$. Moreover, this equivalence does not come from a zig-zag of Quillen equivalences since the homotopy types of mapping spaces in  $\LSpG$ are not products of Eilenberg-MacLane spaces in general. Hence the model category 
$$\mathcal{C}^{([1],1)}(E(1)_*E(1)[\mathcal{M}(G)]\mbox{-}\Comod)$$ 
provides an algebraic exotic model for $\LSpG$.

\subsection{A $G$-equivariant stable exotic model} The exotic model constructed in the previous subsection is algebraic but it is not $G$-equivariant in the sense of Section \ref{prelim}. In this subsection we explain why it is Quillen equivalent to a $G$-equivariant stable model category. This model will then provide a counterexample which shows that if $p$ is odd and does not divide the order of $G$, then the $p$-local version of Theorem \ref{maintheo} does not hold.

It follows from Morita theory \cite{SchShi03} and \cite[Proposition 6.3]{DugShi07} that there is an endomorphism differential graded algebra $B$ such that the model category $\mathcal{C}^{([1],1)}(E(1)_*E(1)\mbox{-}\Comod)$ of twisted $([1],1)$-chain complexes of $E(1)_*E(1)$-comodules is related by a zig-zag of Quillen equivalences to the model category $B$-Mod of differential graded $B$-modules. Then the model category $\mathcal{C}^{([1],1)}(E(1)_*E(1)[\mathcal{M}(G)]\mbox{-}\Comod)$ is Quillen equivalent to the model category of contravariant additive functors from $\mathcal{M}[G]$ to $B$-Mod equipped with the projective model structure. We denote this model category by $B[\mathcal{M}(G)]$-Mod. 

\bigskip
Let $HB$ be the associated (cofibrant) Eilenberg-MacLane ring spectrum of $B$ (see \cite{Shi07} and \cite[Theorem 7.1.2.13]{Lur16}). Now we will describe $B[\mathcal{M}(G)]$-Mod in terms of $HB$ and thus provide the desired exotic $G$-equivariant stable model.
By replacing $HB$ sufficiently cofibrantly and using the fact that the inflation functor
$$\varepsilon^*: \Sp \longrightarrow \SpG$$
is a symmetric monoidal left Quillen functor, we see that there is a cofibrant $G$-equivariant ring spectrum $\varepsilon^*HB$ such that its underlying orthogonal ring spectrum is $HB$. The exotic $G$-equivariant stable model we are looking for is the model category $\varepsilon^*HB$-Mod \cite[Section III.7]{ManMay}.

In what follows we will always assume that $p$ is an odd prime, $G$ a finite group and $p$ does not divide the order of $G$.

\begin{proposition} \label{inflationhomotopy} Let $X$ be a $p$-local, cofibrant orthogonal spectrum and $H$ a subgroup of $G$. Then the natural map induced by the inflation
$$A(H) \otimes \pi_*X \longrightarrow \pi_*^H(\varepsilon^*X) $$
is an isomorphism, where $A(H)$ is the Burnside ring of $H$. These isomorphisms are compatible with the Mackey structure.

\end{proposition}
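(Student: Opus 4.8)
The plan is to reduce the claim to the non-equivariant computation of $\pi_*(\varepsilon^*X)^H$ via the tom Dieck splitting, and then identify the resulting wedge decomposition with the Burnside ring tensor $A(H) \otimes \pi_*X$.

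First I would recall that since $X = \varepsilon^* X$ is inflated from the trivial group, for any subgroup $H \leq G$ the restriction $\Res^G_H \varepsilon^* X$ is again inflated, namely $\varepsilon_H^* X$ where $\varepsilon_H \colon H \to 1$. So without loss of generality we may work with $H$ itself in place of $G$ and compute $\pi_*^H(\varepsilon_H^* X)$. Now apply the tom Dieck splitting: for a suspension spectrum of a based $H$-CW complex there is a splitting of $(\Sigma^\infty Y)^H$ into a wedge over conjugacy classes of subgroups; since $\varepsilon_H^* X$ is inflated, each geometric piece $\Phi^K(\varepsilon_H^* X)$ is (non-equivariantly) equivalent to $X$ — this uses precisely the property $\Phi^K \circ \varepsilon^* \cong \mathrm{id}$ recalled before Corollary \ref{geofixedpt}. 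Hence
\[
\pi_*^H(\varepsilon_H^* X) \cong \bigoplus_{(K) \leq H} \pi_*\bigl(\Sigma^\infty_+ BW_H(K) \wedge X\bigr).
\]
Because $X$ is $p$-local and $p \nmid |H|$, the order of each Weyl group $W_H(K)$ is invertible, so $\Sigma^\infty_+ BW_H(K)$ is $p$-locally (stably) equivalent to $S^0$ — this is the same input used in the proof of Proposition \ref{SScondition}. Therefore the right-hand side collapses to $\bigoplus_{(K)\leq H} \pi_* X$, a free $\pi_*X$-module on the set of conjugacy classes of subgroups of $H$, which is exactly $A(H) \otimes \pi_* X$ since the Burnside ring $A(H)$ is free abelian on $\{(K)\}$.

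To make this into an actual proof that \emph{the inflation-induced natural map} is the isomorphism (rather than just an abstract isomorphism of abelian groups), I would track the idempotents: the standard idempotents in $A(H)_{(p)} \cong \prod_{(K)} \mathbb{Z}_{(p)}$ (available since $p \nmid |H|$, by the Dress-type splitting, cf. Proposition \ref{Mackeysplitting}) act on $\pi_*^H(\varepsilon_H^* X)$ and split it into the summands indexed by $(K)$; the inflation map $A(H) \otimes \pi_* X \to \pi_*^H(\varepsilon_H^* X)$ sends the idempotent $e_{(K)}$ times $\pi_* X$ into the $(K)$-summand, and on the summand $(H)$ (the geometric fixed point piece) the map is visibly the identity on $\pi_* X$ by the $\Phi^H \circ \varepsilon^* \cong \mathrm{id}$ identification; for the other summands one reduces to smaller subgroups by induction on $|H|$, using compatibility of tom Dieck splitting and inflation with restriction. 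Finally, the compatibility with the Mackey structure follows because inflation, the tom Dieck splitting, and the Burnside ring (with its restriction, conjugation and transfer maps) are all natural in the relevant sense — restrictions and conjugations are immediate, and the transfer statement reduces, summand by summand under the splitting, to the elementary behaviour of transfers on classifying spaces of Weyl groups, which again simplifies $p$-locally.

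The main obstacle I anticipate is not the existence of the isomorphism — that is essentially the tom Dieck splitting together with the $p$-local collapse — but rather verifying cleanly that the \emph{specific} natural transformation coming from inflation realizes it, and that it respects transfers. Checking the transfer compatibility requires carefully matching the double-coset description of transfers on the $G$-spectrum side (as in Section \ref{sec:localcat}) with the combinatorics of transfers in the Burnside ring, and the cleanest route is probably the inductive/idempotent bookkeeping sketched above, possibly by citing the analogous organization already carried out for $\pi_*^H$ of inflated spectra in \cite{P16} or \cite{Sch15} rather than re-deriving it from scratch.
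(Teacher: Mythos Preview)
Your argument is essentially correct, but the paper's proof is dramatically shorter and avoids all the bookkeeping you anticipate. The paper simply observes that both $X \mapsto A(H)\otimes\pi_*X$ and $X \mapsto \pi_*^H(\varepsilon^*X)$ are homology theories in the variable $X$, and the inflation map is a natural transformation between them; hence it suffices to check the single case $X=S^0_{(p)}$, which is the classical identification of the $p$-local equivariant stable stems when $p\nmid |G|$. That last step is of course proved via tom Dieck (the same input you use), so the two arguments share the same underlying content, but the homology-theory reduction eliminates your entire second half: there is no need to track idempotents, no induction on $|H|$, and no separate verification that the \emph{specific} inflation map realises the isomorphism or that transfers match up, since all of that is packaged into ``natural transformation of homology theories which is an isomorphism on the sphere''.

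One small technical point: the tom Dieck splitting you invoke is stated for suspension spectra $\Sigma^\infty Y$, but $\varepsilon^*X$ is not of that form for general $X$. What you actually need is the extension of the splitting to arbitrary inflated spectra, which does hold (via isotropy separation together with $\Phi^K\circ\varepsilon^*\cong\mathrm{id}$), but you should say so explicitly. The paper's reduction to $X=S^0_{(p)}$ neatly sidesteps this issue as well, since for the sphere one is genuinely in the suspension-spectrum case. Your more explicit route does buy something: it makes the summand-by-summand structure of the isomorphism visible, which could be useful if one later needed to compute with it rather than merely know it exists.
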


\begin{proof} Both sides are homology theories in $X$. Thus we only have to check that the map is an isomorphism for the sphere spectrum $S^0_{(p)}$. This follows from the fact that $p$ does not divide the order of $G$. \end{proof}

\begin{corollary} \label{EMDGMorita} For any subgroups $K$ and $L$ of $G$, there are isomorphisms
\begin{align*}[G/K_+ \wedge \varepsilon^*HB, G/L_+ \wedge \varepsilon^*HB]_*^{\varepsilon^*HB} \cong  \bigoplus_{[g] \in K \setminus G /L}  A(K \cap {}^g L) \otimes H_*B \\ \cong [\SigmaGK, \SigmaGL]_0^G \otimes H_*B, \end{align*}
where $H_*B$ is the graded homology ring of $B$. \qed \end{corollary}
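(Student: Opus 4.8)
The plan is to reduce everything to the free--forgetful adjunction between $\varepsilon^*HB$-modules and orthogonal $G$-spectra, combined with the Wirthm\"uller isomorphism, the double coset formula and Proposition \ref{inflationhomotopy}. First I would recall from \cite[Section III.7]{ManMay} that smashing with $\varepsilon^*HB$ is a left Quillen functor from $\SpG$ to the category $\varepsilon^*HB$-Mod of $\varepsilon^*HB$-modules, left adjoint to the forgetful functor, and that $G/K_+ \wedge \varepsilon^*HB$ is precisely the free $\varepsilon^*HB$-module on the cofibrant $G$-spectrum $\SigmaGK$. Passing to homotopy categories, this adjunction and the fact that the underlying $G$-spectrum of $G/L_+ \wedge \varepsilon^*HB$ is $\SigmaGL \wedge \varepsilon^*HB$ give a natural isomorphism
\[
[G/K_+ \wedge \varepsilon^*HB,\, G/L_+ \wedge \varepsilon^*HB]_*^{\varepsilon^*HB} \;\cong\; [\SigmaGK,\, \SigmaGL \wedge \varepsilon^*HB]_*^{G}.
\]

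Next I would compute the right-hand side inside $\Ho(\SpG)$. By the Wirthm\"uller isomorphism $\SigmaGL \wedge \varepsilon^*HB \simeq G \ltimes_L \Res^G_L \varepsilon^*HB$, and since $\SigmaGK = G \ltimes_K S^0$, the $(G \ltimes_K -,\ \Res^G_K)$-adjunction identifies $[\SigmaGK,\, \SigmaGL \wedge \varepsilon^*HB]_*^{G}$ with $\pi_*^K\bigl(\Res^G_K(G \ltimes_L \Res^G_L \varepsilon^*HB)\bigr)$. The double coset formula (the unlocalised version of the one recalled in Section \ref{sec:localcat}) splits this restriction as a finite wedge indexed by $K \setminus G / L$, whose $[g]$-summand is $K \ltimes_{K \cap {}^gL} \Res^{{}^gL}_{K \cap {}^gL}(c_g^*\Res^G_L \varepsilon^*HB)$. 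The key observation is that conjugation and restriction carry an inflated $G$-spectrum to an inflated spectrum, so $\Res^{{}^gL}_{K \cap {}^gL} c_g^* \Res^G_L \varepsilon^*HB$ is again an inflation of $HB$; applying Wirthm\"uller once more and using finiteness of $K \setminus G / L$ (so the wedge commutes with $\pi_*^K$), I obtain
\[
[\SigmaGK,\, \SigmaGL \wedge \varepsilon^*HB]_*^{G} \;\cong\; \bigoplus_{[g] \in K \setminus G / L} \pi_*^{K \cap {}^gL}(\varepsilon^*HB).
\]
Now Proposition \ref{inflationhomotopy} applies (with $X = HB$, which is $p$-local and may be taken cofibrant), giving $\pi_*^{K \cap {}^gL}(\varepsilon^*HB) \cong A(K \cap {}^gL) \otimes \pi_*HB$, and $\pi_*HB \cong H_*B$ by the construction of the Eilenberg--MacLane ring spectrum. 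This yields the first isomorphism of the corollary.

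For the second isomorphism I would run the identical chain of reductions with $HB$ replaced by the sphere spectrum $S^0$: the same double coset argument gives $[\SigmaGK, \SigmaGL]^G_* \cong \bigoplus_{[g] \in K \setminus G / L} \pi_*^{K \cap {}^gL}(S^0)$, and in degree zero $\pi_0^{J}(S^0) \cong A(J)$ (tom Dieck's theorem, i.e. the basic Burnside-category computation recalled in Section \ref{prelim}), so $[\SigmaGK, \SigmaGL]^G_0 \cong \bigoplus_{[g]} A(K \cap {}^gL)$; tensoring with $H_*B$ matches the middle term, and since the isomorphisms of Proposition \ref{inflationhomotopy} are compatible with restrictions, conjugations and transfers, the two decompositions are compatible. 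I expect the only real subtlety to be the bookkeeping in the middle paragraph --- verifying that conjugation followed by restriction preserves inflated spectra (so that Proposition \ref{inflationhomotopy} applies termwise), and being careful that $\SigmaGK \wedge \varepsilon^*HB$ is genuinely cofibrant in the chosen model structure on $\varepsilon^*HB$-Mod so that the adjunction above computes derived maps --- both standard, but they are where the argument could go wrong if handled carelessly.
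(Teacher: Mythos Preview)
Your argument is correct and is exactly the intended one: the paper presents this corollary with no proof beyond the \qed, since it follows directly from Proposition \ref{inflationhomotopy} via the free--forgetful adjunction for $\varepsilon^*HB$-modules together with the double coset decomposition, precisely as you outline. One small terminological point: the identification $\SigmaGL \wedge \varepsilon^*HB \simeq G \ltimes_L \Res^G_L \varepsilon^*HB$ is the projection formula (Frobenius reciprocity) rather than the Wirthm\"uller isomorphism, and the passage $\pi_*^K(K \ltimes_{K\cap{}^gL} X) \cong \pi_*^{K\cap{}^gL}(X)$ is just the $(K \ltimes_{K\cap{}^gL}-,\,\Res)$-adjunction again --- the mathematics is right, only the labels should be adjusted.
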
 

\begin{proposition} \label{GTopmodel} The model categories $\mathcal{C}^{([1],1)}(E(1)_*E(1)[\mathcal{M}(G)]\mbox{-}\Comod)$ and $\varepsilon^*HB$-Mod are Quillen equivalent. 

\end{proposition}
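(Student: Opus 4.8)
The plan is to realise both model categories as modules over one and the same small spectral category, by combining Schwede--Shipley Morita theory on the topological side with Shipley's Dold--Kan type theorem on the algebraic side, and then to use the hypothesis $p\nmid|G|$ to identify the two resulting spectral categories.

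First I would recall, as in the discussion preceding the proposition, that $\mathcal{C}^{([1],1)}(E(1)_*E(1)[\mathcal{M}(G)]\mbox{-}\Comod)$ is Quillen equivalent to $B[\mathcal{M}(G)]$-Mod, the category of additive functors from $\mathcal{M}[G]$ (the Burnside category $\mathcal{B}(G)$) to $B$-Mod, with the projective model structure. This is precisely the category of differential graded modules over the differential graded category $B\otimes_{\Z}\mathcal{M}[G]$ with object set $\{G/H\mid H\leq G\}$ and morphism complexes $B\otimes_{\Z}\mathcal{M}[G](G/K,G/L)$. Applying Shipley's comparison between differential graded algebras and their associated Eilenberg-MacLane ring spectra \cite{Shi07}, in its elementary many-objects elaboration (cf.\ \cite{DugShi07}), yields a zig-zag of Quillen equivalences between the category of differential graded $B\otimes_{\Z}\mathcal{M}[G]$-modules and $\mathcal{A}$-Mod, where $\mathcal{A}$ is the spectral category with the same object set and morphism spectra the Eilenberg-MacLane spectra $H\bigl(B\otimes_{\Z}\mathcal{M}[G](G/K,G/L)\bigr)$; note that $\mathcal{A}$ is naturally enriched in $HB$-modules and $\pi_*\mathcal{A}\cong\mathcal{M}[G]\otimes_{\Z}H_*B$.

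Second, on the topological side, I would observe that $\varepsilon^*HB$-Mod is a cofibrantly generated, proper $G$-equivariant stable model category (modules over an orthogonal equivariant ring spectrum, \cite[III.7]{ManMay}), and that the set $\{\,G/H_+\wedge\varepsilon^*HB\mid H\leq G\,\}$ is a set of compact generators of $\Ho(\varepsilon^*HB\text{-Mod})$: they generate because the $G/H_+$ generate $\Ho(\SpG)$ and a $\varepsilon^*HB$-module $M$ with $\pi^H_*M=0$ for all $H$ is contractible already in $\SpG$, and they are compact because the $G/H_+$ are compact in $\SpG$ and the forgetful functor preserves coproducts. Hence, by the Schwede--Shipley recognition theorem \cite{SchShi03} in its form for spectral (here $\SpG$-) model categories, $\varepsilon^*HB$-Mod is Quillen equivalent to $\mathcal{E}$-Mod, where $\mathcal{E}$ is the endomorphism spectral category of these generators, $\mathcal{E}(G/K,G/L)=\mathbf{R}\Hom_{\varepsilon^*HB}(G/K_+\wedge\varepsilon^*HB,\,G/L_+\wedge\varepsilon^*HB)$, which is again enriched in $HB$-modules. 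By Corollary \ref{EMDGMorita} together with Proposition \ref{inflationhomotopy}, the graded ringoid $\pi_*\mathcal{E}$ is isomorphic to $\mathcal{M}[G]\otimes_{\Z}H_*B$ compatibly with composition, so $\pi_*\mathcal{E}\cong\pi_*\mathcal{A}$.

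Third --- and this is the step that genuinely uses $p\nmid|G|$ and that I expect to be the main obstacle --- one must upgrade this $\pi_*$-isomorphism to an equivalence of spectral categories $\mathcal{E}\simeq\mathcal{A}$. Since $\mathcal{E}$ and $\mathcal{A}$ are both $HB$-linear, Shipley's theorem presents them as differential graded categories over $B$ with homology $\mathcal{M}[G]\otimes_{\Z}H_*B$, and the claim becomes that such a differential graded $B$-category is \emph{formal}, i.e.\ quasi-equivalent to $B\otimes_{\Z}\mathcal{M}[G]$. By the corollary of Proposition \ref{Mackeysplitting}, $\mathcal{M}[G]_{(p)}$ is Morita equivalent to the product $\prod_{(H)\leq G}\Z_{(p)}[W_G(H)]$, so after this splitting the problem reduces to showing that an $HB$-algebra with homotopy ring $(H_*B)[W_G(H)]$ is equivalent to the group ring $HB[W_G(H)]$; as $|W_G(H)|$ is invertible in $\pi_0$ this extension is separable and the relevant topological Andr\'e--Quillen (equivalently Hochschild) obstruction groups vanish, so there is a unique such $HB$-algebra up to equivalence. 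Alternatively one may build the equivalence directly from the splitting idempotents, realised topologically through the inflation functor and the spaces $\widetilde{E}\mathcal{P}(H\,|\,G)$ exactly as in the proof of Proposition \ref{realizationinj}. The genuine subtlety here is not any single mapping spectrum but the requirement that all these identifications be made simultaneously compatible with the full diagram of restrictions, conjugations and transfers, that is, with the whole spectral Burnside category; granting this, concatenating the Quillen equivalences
\[
\mathcal{C}^{([1],1)}(E(1)_*E(1)[\mathcal{M}(G)]\mbox{-}\Comod)\simeq_Q B[\mathcal{M}(G)]\mbox{-}\Mod\simeq_Q\mathcal{A}\mbox{-}\Mod\simeq_Q\mathcal{E}\mbox{-}\Mod\simeq_Q\varepsilon^*HB\mbox{-}\Mod
\]
proves the proposition.
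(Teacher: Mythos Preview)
Your overall strategy coincides with the paper's: apply Schwede--Shipley Morita theory to $\varepsilon^*HB$-Mod, apply Shipley's $H\mathbb{Z}$-algebra theorem to $B[\mathcal{M}(G)]$-Mod, and then compare the two resulting endomorphism objects. Your first two steps are essentially identical to the paper's.

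The divergence is in the third step. Where you propose an obstruction-theoretic formality argument (separability of the group rings $\mathbb{Z}_{(p)}[W_G(H)]$, vanishing of topological Andr\'e--Quillen cohomology, \'etale rigidity), the paper instead constructs an explicit zig-zag of maps of ring spectra and checks each one on homotopy. Writing $\mathcal{A} = \mathbf{R}\Hom_G\bigl(\bigvee_H \SigmaGH,\, \bigvee_H \SigmaGH\bigr)$ for the endomorphism ring of the orbits in $\SpG$, there is a \emph{canonical} map of ring spectra
\[
\mathcal{A}\wedge^{\mathbf{L}} HB \longrightarrow \mathcal{R}
\]
arising from the $\SpG$-enrichment of $\varepsilon^*HB$-Mod together with its $HB$-linearity, and Corollary~\ref{EMDGMorita} shows this is a $\pi_*$-isomorphism. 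One then applies the Postnikov truncation $\mathcal{A}\to H\pi_0\mathcal{A}$ and passes to $H\pi_0\mathcal{A}\wedge_{H\mathbb{Z}}^{\mathbf{L}} HB$, again checking via Corollary~\ref{EMDGMorita} that nothing is lost, and finally identifies this last ring spectrum with $H(\pi_0\mathcal{A}\otimes B)$ using \cite[7.1.2.13]{Lur16}. The Burnside category enters only at the very end, when one unpacks $\pi_0\mathcal{A}\otimes B$ via idempotents.

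The practical advantage of the paper's route is that it never needs to prove a uniqueness or formality statement: every comparison is along an explicit map whose effect on homotopy is already computed. Your separability argument is plausible and could be made to work (the \'etale lifting in \cite{Lur16} is exactly the relevant tool), but you correctly identify the genuine difficulty with your version: after splitting along conjugacy classes via Proposition~\ref{Mackeysplitting} and identifying each factor, one must reassemble these identifications into an equivalence of spectral \emph{categories} compatible with all restrictions, conjugations and transfers simultaneously. The paper sidesteps this entirely by working throughout with the single endomorphism ring of $\bigvee_H \SigmaGH$, so the Mackey compatibilities are built in from the outset rather than imposed at the end.
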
 

\begin{proof} Let $\mathcal{R}$ denote a cofibrant replacement of the derived endomorphism ring spectrum 
$$\mathbf{R}\Hom_{\varepsilon^*HB}(\vee_{H \leq G} (G/H_+ \wedge  \varepsilon^*HB), \vee_{H \leq G} (G/H_+ \wedge  \varepsilon^*HB)).$$ 
Then $\varepsilon^*HB$-Mod is Quillen equivalent to $\mathcal{R}$-Mod. Since $p$ does not divide the order of $G$, it follows from Corollary \ref{EMDGMorita} that there is a stable equivalence of cofbrant ring spectra
$$\xymatrix{ \mathbf{R}\Hom_G(\vee_{H \leq G} (\SigmaGH),  \vee_{H \leq G} (\SigmaGH)) \wedge^{\mathbf{L}} HB \ar[r]^-\sim & \mathcal{R}.}$$
Let $\mathcal{A}$ denote the ring spectrum $\mathbf{R}\Hom_G(\vee_{H \leq G} (\SigmaGH),  \vee_{H \leq G} (\SigmaGH))$ for short. There is canonical map of ring spectra
$$\mathcal{A} \wedge^{\mathbf{L}} HB \longrightarrow H\pi_0\mathcal{A} \wedge^{\mathbf{L}} HB \longrightarrow H\pi_0\mathcal{A} \wedge_{H\mathbb{Z}}^{\mathbf{L}} HB$$
which is again a weak equivalence because of Corollary \ref{EMDGMorita}. On the other hand it follows from \cite[Example 3.2.4.4. and Theorem 7.1.2.13]{Lur16} that the ring spectra 
\[
H\pi_0\mathcal{A} \wedge_{H\mathbb{Z}}^{\mathbf{L}} HB \,\,\,\,\,\mbox{and}\,\,\,\,\,H(\pi_0\mathcal{A} \otimes B)
\]
are weakly equivalent. The model category $H(\pi_0\mathcal{A} \otimes B)$-Mod is Quillen equivalent to $(\pi_0\mathcal{A} \otimes B)\mbox{-Mod}$ by \cite{Shi07} and \cite[Theorem 7.1.2.13]{Lur16}. Finally, we note that the category $(\pi_0\mathcal{A} \otimes B)\mbox{-Mod}$ is equivalent to $B[\mathcal{M}(G)]$-Mod, which is obtained by interpreting modules over a ring with several objects as modules over a usual ring in terms of idempotents. This completes the proof. \end{proof}

This proposition together with Theorem \ref{Frankeequiv} provides the desired counterexample of an exotic $G$-equivariant stable model. It is easy to trace through and see that orbits correspond to each other when passing to homotopy categories, and the Mackey structure is preserved. Thus, we arrived at a counterexample for $v_1$-local $G$-equivariant rigidity when $p \nmid |G|$ and $p \ge 5$.

\bibliographystyle{alpha}

\begin{thebibliography}{LMSM86}

\bibitem[Ada66]{Ada66}
J.~F. Adams.
\newblock On the groups {$J(X)$}. {IV}.
\newblock {\em Topology}, 5:21--71, 1966.

\bibitem[Bar09]{Bar09}
D.~Barnes.
\newblock Classifying rational {$G$}-spectra for finite {$G$}.
\newblock {\em Homology, Homotopy Appl.}, 11(1):141--170, 2009.

\bibitem[BGKS17]{BarGreKedShi}
D.~Barnes, J.~P~C. Greenlees, M.~Kedziorek, and B.~Shipley.
\newblock Rational {SO}(2)--equivariant spectra.
\newblock {\em Algebr. Geom. Topol.}, 17(2):983--1020, 2017.

\bibitem[BR11a]{BarRoi11b}
D.~Barnes and C.~Roitzheim.
\newblock Local framings.
\newblock {\em New York J. Math.}, 17:513--552, 2011.

\bibitem[BR11b]{BarRoi11a}
D.~Barnes and C.~Roitzheim.
\newblock Monoidality of Franke's exotic model.
\newblock {\em Adv. Math.}, 228:3223--3248, 2011.

\bibitem[BR14]{BarRoi14b}
D.~Barnes and C.~Roitzheim.
\newblock Rational equivariant rigidity.
\newblock {\em Contemp. Math.}, 617:13--30, 2014.

\bibitem[BDS16]{BrDuSt}
M.~Brun, B.~Dundas, and M.~Stolz.
\newblock Equivariant structure on smash powers.
\newblock {\em arXiv:1604.05939}, 2016.

\bibitem[Blu06]{Blumberg}
A.~J. Blumberg.
\newblock Continuous functors as a model for the equivariant stable homotopy
  category.
\newblock {\em Algebr. Geom. Topol.}, 6:2257--2295, 2006.

\bibitem[Bou79]{Bou79}
A.~K. Bousfield.
\newblock The localization of spectra with respect to homology.
\newblock {\em Topology}, 18(4):257--281, 1979.

\bibitem[Bou85]{Bou85}
A.~K. Bousfield.
\newblock On the homotopy theory of {$K$}-local spectra at an odd prime.
\newblock {\em Amer. J. Math.}, 107(4):895--932, 1985.

\bibitem[Dug01]{Dug01}
D.~Dugger.
\newblock Replacing model categories with simplicial ones.
\newblock {\em Trans. Amer. Math. Soc.}, 353(12):5003--5027 (electronic), 2001.

\bibitem[DS07]{DugShi07}
D.~Dugger and B.~Shipley.
\newblock Topological equivalences for differential graded algebras.
\newblock {\em Adv. Math.}, 212(1):37--61, 2007.

\bibitem[DS09]{DugShi09}
D.~Dugger and B.~Shipley.
\newblock A curious example of triangulated-equivalent model categories which
  are not {Q}uillen equivalent.
\newblock {\em Algebr. Geom. Topol.}, 9(1):135--166, 2009.

\bibitem[Fra96]{Fra96}
J.~Franke.
\newblock Uniqueness theorems for certain triangulated categories possessing an
  adams spectral sequence.
\newblock {\em \texttt{http://www.math.uiuc.edu/K-theory/0139/}}, 1996.

\bibitem[Gre99]{Gre99}
J.~P.~C. Greenlees.
\newblock Rational {$S^1$}-equivariant stable homotopy theory.
\newblock {\em Mem. Amer. Math. Soc.}, 138(661):xii+289, 1999.

\bibitem[GM95]{GreMay95}
J.~P.~C. Greenlees and J.~P. May.
\newblock Generalized {T}ate cohomology.
\newblock {\em Mem. Amer. Math. Soc.}, 113(543):viii+178, 1995.

\bibitem[GS17]{GS07}
J.~P.~C. Greenlees and B.~Shipley.
\newblock An algebraic model for rational torus-equivariant stable homotopy.
\newblock {\em arXiv:1101.2511v5}, 2017.

\bibitem[Hau17]{Haus}
M.~Hausmann.
\newblock {$G$}-symmetric spectra, semistability and the multiplicative norm.
\newblock {\em arXiv:1411.2290, to appear in J. Pure. Appl. Alg.}, 2017.

\bibitem[HHR16]{HHR}
M.~A. Hill, M.~J. Hopkins, and D.~C. Ravenel.
\newblock On the nonexistence of elements of {K}ervaire invariant one.
\newblock {\em Ann. of Math. (2)}, 184(1):1--262, 2016.

\bibitem[Hir03]{Hir03}
P. S. Hirschhorn.
\newblock {\em Model categories and their localizations}, volume~99 of {\em
  Mathematical Surveys and Monographs}.
\newblock American Mathematical Society, Providence, RI, 2003.

\bibitem[Hov99]{Hov99}
M.~Hovey.
\newblock {\em Model categories}, volume~63 of {\em Mathematical Surveys and
  Monographs}.
\newblock American Mathematical Society, Providence, RI, 1999.

\bibitem[HPS97]{HPS}
M.~Hovey, J.~H. Palmieri, and N.~P. Strickland.
\newblock Axiomatic stable homotopy theory.
\newblock {\em Mem. Amer. Math. Soc.}, 128(610):x+114, 1997.

\bibitem[HS98]{HopSm98}
M.~J. Hopkins and J.~H. Smith.
\newblock Nilpotence and stable homotopy theory. {II}.
\newblock {\em Ann. of Math. (2)}, 148(1):1--49, 1998.

\bibitem[Hut12]{Hut12}
K.~Hutschenreuter.
\newblock On rigidity of the ring spectra ${P}_m{S}_{(p)}$ and $ko$.
\newblock {\em \texttt{http://hss.ulb.uni-bonn.de/2012/2974/2974.htm}}, 2012.

\bibitem[Ked16a]{Ked16}
M.~Kedziorek.
\newblock An algebraic model for rational {$G$}-spectra over an exceptional
  subgroup.
\newblock {\em arXiv:1511.05993}, 2016.

\bibitem[Ked16b]{Ked16a}
M.~Kedziorek.
\newblock An algebraic model for rational {$SO(3)$}-spectra.
\newblock {\em arXiv:1611.08415}, 2016.

\bibitem[LMSM86]{LewMaySte86}
L.~G. Lewis, Jr., J.~P. May, M.~Steinberger, and J.~E. McClure.
\newblock {\em Equivariant stable homotopy theory}, volume 1213 of {\em Lecture
  Notes in Mathematics}.
\newblock Springer-Verlag, Berlin, 1986.
\newblock With contributions by J. E. McClure.

\bibitem[Lur16]{Lur16}
J.~Lurie.
\newblock Higher algebra.
\newblock {\em \texttt{http://www.math.harvard.edu/~lurie/papers/HA.pdf}},
  2016.

\bibitem[Man04]{Man04}
M.~A. Mandell.
\newblock Equivariant symmetric spectra.
\newblock In {\em Homotopy theory: relations with algebraic geometry, group
  cohomology, and algebraic {$K$}-theory}, volume 346 of {\em Contemp. Math.},
  pages 399--452. Amer. Math. Soc., Providence, RI, 2004.

\bibitem[MM02]{ManMay}
M.~A. Mandell and J.~P. May.
\newblock Equivariant orthogonal spectra and {$S$}-modules.
\newblock {\em Mem. Amer. Math. Soc.}, 159(755):x+108, 2002.

\bibitem[Mit68]{Mit68}
B.~Mitchell.
\newblock On the dimension of objects and categories. {I}. {M}onoids.
\newblock {\em J. Algebra}, 9:314--340, 1968.

\bibitem[Pat16a]{Pat16}
I.~Patchkoria.
\newblock On exotic equivalences and a theorem of {F}ranke.
\newblock {\em arXiv:1612.03732}, 2016.

\bibitem[Pat16b]{P16}
I.~Patchkoria.
\newblock Rigidity in equivariant stable homotopy theory.
\newblock {\em Algebr. Geom. Topol.}, 16(4):2159 -- 2227, 2016.

\bibitem[Pat17]{P17}
I.~Patchkoria.
\newblock The derived category of complex periodic {$K$}-theory localized at an
  odd prime.
\newblock {\em Adv. Math.}, 309:392--435, 2017.

\bibitem[Rav82]{Rav82}
D.~C. Ravenel.
\newblock Morava {$K$}-theories and finite groups.
\newblock In {\em Symposium on {A}lgebraic {T}opology in honor of {J}os\'e
  {A}dem ({O}axtepec, 1981)}, volume~12 of {\em Contemp. Math.}, pages
  289--292. Amer. Math. Soc., Providence, R.I., 1982.

\bibitem[Rav84]{Rav84}
D.~C. Ravenel.
\newblock Localization with respect to certain periodic homology theories.
\newblock {\em Amer. J. Math.}, 106(2):351--414, 1984.

\bibitem[Roi07]{Roi07}
C.~Roitzheim.
\newblock Rigidity and exotic models for the {$K$}-local stable homotopy
  category.
\newblock {\em Geom. Topol.}, 11:1855--1886, 2007.

\bibitem[Roi08]{Roi08}
C.~Roitzheim.
\newblock On the algebraic classification of {$K$}-local spectra.
\newblock {\em Homology, Homotopy Appl.}, 10(1):389--412, 2008.

\bibitem[Sch02]{Schl}
M.~Schlichting.
\newblock A note on {$K$}-theory and triangulated categories.
\newblock {\em Invent. Math.}, 150(1):111--116, 2002.

\bibitem[Sch07]{Sch07}
S.~Schwede.
\newblock The stable homotopy category is rigid.
\newblock {\em Ann. of Math. (2)}, 166(3):837--863, 2007.

\bibitem[Sch16]{Sch16}
S.~Schwede.
\newblock Lecture notes on equivariant stable homotopy theory.
\newblock {\em
  \texttt{http://www.math.uni-bonn.de/people/schwede/equivariant.pdf}}, 2016.

\bibitem[Sch17]{Sch15}
S.~Schwede.
\newblock Global homotopy theory.
\newblock {\em \texttt{http://www.math.uni-bonn.de/people/schwede/global.pdf}},
  2017.

\bibitem[SS03]{SchShi03}
S.~Schwede and B.~Shipley.
\newblock Stable model categories are categories of modules.
\newblock {\em Topology}, 42(1):103--153, 2003.

\bibitem[Shi07]{Shi07}
B.~Shipley.
\newblock {$H\Bbb Z$}-algebra spectra are differential graded algebras.
\newblock {\em Amer. J. Math.}, 129(2):351--379, 2007.

\bibitem[Sto11]{Sto}
M.~Stolz.
\newblock {\em Equivariant structure on smash powers of commutative ring
  spectra}.
\newblock PhD thesis, University of Bergen, 2011.

\bibitem[Th{\'e}88]{Th88}
J.~Th{\'e}venaz.
\newblock Some remarks on {$G$}-functors and the {B}rauer morphism.
\newblock {\em J. Reine Angew. Math.}, 384:24--56, 1988.

\bibitem[Tod62]{Tod62}
H.~Toda.
\newblock {\em Composition methods in homotopy groups of spheres}.
\newblock Annals of Mathematics Studies, No. 49. Princeton University Press,
  Princeton, N.J., 1962.
  
 \bibitem[tD87]{tom}
T.~tom Dieck.
\newblock {\em Transformation groups}, volume~8 of {\em De Gruyter Studies in
  Mathematics}.
\newblock Walter de Gruyter \& Co., Berlin, 1987. 

\end{thebibliography}
\def\cprime{$'$}

\end{document}